\setlist[itemize]{label=$\bullet$}
\footnotesize\color{Gray},
\titleformat{\section}[block]
{\bfseries\Large}
{\thesection.}{5pt}
{\filcenter}
\titleformat{\subsection}[block]
{\bfseries\normalsize}
{\thesubsection.}{5pt}
{\filcenter}
\titleformat{\subsubsection}[block]
{\bfseries\normalsize}
{\thesubsubsection.}{5pt}
{\filcenter}
\theoremstyle{plain}
\newtheorem{theorem}{Theorem}[section]
\newtheorem{prop}[theorem]{Proposition}
\newtheorem{cor}[theorem]{Corollary}
\newtheorem{lem}[theorem]{Lemma}
\theoremstyle{remark}
\theoremstyle{definition}
\newtheorem{definition}[theorem]{Definition}
\newtheorem{rmk}[theorem]{Remark}
\DeclareMathOperator{\GL}{GL}
\DeclareMathOperator{\GU}{GU}
\DeclareMathOperator{\SL}{SL}
\DeclareMathOperator{\SU}{SU}
\DeclareMathOperator{\Aut}{Aut}
\DeclareMathOperator{\Out}{Out}
\DeclareMathOperator{\diag}{diag}
\DeclareMathOperator{\Image}{Im}
\DeclareMathOperator{\Res}{Res}
\DeclareMathOperator{\Irr}{Irr}
\DeclareMathOperator{\IBr}{IBr}
\DeclareMathOperator{\Uni}{Uni}
\DeclareMathOperator{\spec}{spec}
\newcommand{\leftexp}[2]{{\vphantom{#2}}^{#1}{#2}}
\def\F{\mathbb{F}}
\def\IG{\Irr(\tilde{G})}
\def\ILG{\IBr(\tilde{G})}
\def\tg{\tilde{G}}
\def\tbg{\tilde{\mathbf{G}}}
\def\ll{\left\langle}
\def\rr{\right\rangle}
\def\tc{\tilde{\chi}}
\def\te{\tilde{\eta}}
\def\me{\mathcal{E}}
\def\mo{\mathcal{O}}
\def\mos{\mathcal{O}^*}
\def\tvf{\tilde{\varphi}}
\def\mf{\mathcal{F}}
\def\mel{\mathcal{E}_{\ell}}
\def\ZO{Z(\tilde{G})\rtimes O(\tilde{G})}
\def\ZlpO{Z(\tilde{G})_{\ell'}\rtimes O(\tilde{G})}
\def\th{\Theta}
\def\tth{\tilde{\th}}
\def\tme{\tilde{\me}}
\def\w{\omega}
\def\fxi{\ll F\rr.\xi}
\def\ofq{\overline{\F}_q}
\begin{document}
\thispagestyle{empty}
\author{David Denoncin \footnote{Univ. Paris Diderot, Sorbonne Paris Cit\'e, CNRS, UMR 7586, Institut Math\'ematique de Jussieu - Paris Rive Gauche, 75205, Paris, France}}
\title{Stable basic sets for finite special linear and unitary groups}
\maketitle
\setlength{\parskip}{11pt}

\rule{\textwidth}{1pt}
\begin{abstract}
In this paper we show, using
Deligne-Lusztig theory and Kawanaka's theory of generalised
Gelfand-Graev representations, that the decomposition matrix of the
special linear and unitary group in non defining characteristic can be made
unitriangular with respect to a basic set that is stable under the action of automorphisms.
\end{abstract}
\rule{\textwidth}{1pt}

\newpage
\tableofcontents

\cleardoublepage\pagenumbering{arabic}
\setcounter{page}{1}
\renewcommand{\thetheorem}{\Alph{theorem}}
\setcounter{theorem}{0}

\section{Introduction}
\label{sec:introduction}

While ordinary characters of finite groups of Lie type are fairly well known, not so much is known about Brauer characters and in particular we do not have a general parameterisation for them. Decomposition matrices offer information about Brauer characters, and the unitriangularity of such matrices sets up a natural bijection from the set of Brauer characters to a corresponding so-called basic set of ordinary characters (see Definition \ref{def:unitri}). When we have such a bijection, we can try to obtain an equivariant one with respect to automorphisms. This is the case if and only if the basic set is stable under the action of automorphisms (see Lemma \ref{lem:equivar}).
 This is useful to deal with some counting conjectures (see \cite[Theorem 7.4]{CabSp13}). 

The unitriangularity of the decomposition matrices of finite general linear groups $\GL_n(q)$ in non defining characteristic $\ell$ was proved by Dipper in \cite{1Dip85} and \cite{2Dip85}, while the case of special linear groups $\SL_n(q)$ was done by Kleshchev and Tiep in \cite{KT}. However the techniques used in these papers rely on an actual construction of Brauer characters of $\GL_n(q)$. Such a construction is not known in the case of general unitary groups $\GU_n(q)$ so these methods cannot be applied for now to prove the unitriangularity result for either $\GU_n(q)$ or the special unitary groups $\SU_n(q)$. Nevertheless the result has been shown for $\GU_n(q)$ by Geck in \cite{Geck}, using Kawanaka's theory of Generalised Gelfand Graev Representations (GGGRs for short). Such a method can also be used to recover the result for $\GL_n(q)$. In \cite{GeckII} the same method was applied in the case of $\SU_n(q)$, but only the cases where $\ell \nmid \gcd(n, q+1)$ could be treated (see \cite[Theorem C]{GeckII}). In this case the usual basic set for $\GU_n(q)$ gives by restriction a basic set for $\SU_n(q)$. Investigating the methods developed in \cite{KT} and translating them in the context of Deligne-Lusztig theory and GGGRs, we first found that their methods could be adapted to  $\SU_n(q)$. But as was the case for $\SL_n(q)$, the many basic sets obtained for $\SU_n(q)$ were not stable with respect to automorphisms. In this paper we prove the following stronger statement:

\begin{theorem}\label{thm:A}
Let $q$ be a power of some prime number $p$ and $\ell$ be a prime number not dividing $q$. Let $n\geq 2$ and let $G\in \{\SL_n(q), \SU_n(q)\}$ be either the special linear or unitary group over a finite field with $q$ elements. Then $G$ has a unitriangular basic set in characteristic $\ell$ that is stable under the action of $\Aut(G)$.
\end{theorem}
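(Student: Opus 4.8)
The plan is to work not with $G = \SL_n(q)$ or $\SU_n(q)$ directly, but with a regular embedding $G \hookrightarrow \tilde G$ where $\tilde G = \GL_n(q)$ or $\GU_n(q)$. For these larger groups the unitriangularity of the decomposition matrix with respect to a canonically-defined basic set (the one coming from Deligne–Lusztig theory / Lusztig series, or equivalently the $e$-Harish-Chandra basic set) is known by Dipper–Geck, and moreover that basic set is itself stable under $\Aut(\tilde G)$ because it is defined intrinsically via the geometric Lusztig series and these are permuted compatibly by automorphisms. The difficulty is purely the passage from $\tilde G$ to $G$: Clifford theory tells us that $\Irr(G)$ and $\IBr(G)$ are obtained from $\Irr(\tilde G)$, $\IBr(\tilde G)$ by restriction and breaking into constituents under the abelian group $\tilde G/G$, but a naive choice of which constituents to take (as in Kleshchev–Tiep for $\SL_n$) produces many possible basic sets, none obviously $\Aut(G)$-stable. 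So the real content is to produce, \emph{simultaneously for all blocks}, a choice of constituents that is both unitriangular and equivariant.

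**The key steps.**

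First I would set up the Clifford-theoretic dictionary precisely: fix $\tilde G$ with connected centre, let $\hat T = \Irr(\tilde G/G)$ act on characters, and recall that the decomposition numbers of $G$ are governed by those of $\tilde G$ together with the action of $\hat T$ and the relevant ramification (there are no nontrivial $2$-cocycles here since $\tilde G/G$ is cyclic, which is a genuine simplification). Second, and this is the crux, I would use Kawanaka's generalised Gelfand–Graev representations: for a suitable unipotent class, the GGGR $\Gamma_u$ of $\tilde G$ has the property — following Geck and Geck–Hézard — that in the basic set each ordinary character occurs in a GGGR with a controlled (essentially multiplicity-one) leading term, giving the unitriangular shape; the point is that GGGRs are defined over $G$ and restrict well, and they are permuted by $\Aut(G)$ in a transparent way because they are built from the $G$-conjugacy data of unipotent elements and an additive character of $\F_q$. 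Third, I would show that one can choose an $\Aut(G)$-stable transversal: the automorphism group acts on the set of $G$-orbits of (geometric) Lusztig series and on the fibres of $\Res^{\tilde G}_G$, and using the GGGR-induced ordering one can pick in each $\langle\text{automorphisms}\rangle\cdot\hat T$-orbit a constituent in a canonical way — e.g. the one whose GGGR multiplicity vector is lexicographically extremal, or by averaging/Glauberman-type arguments — so the resulting set $\mathcal B \subseteq \Irr(G)$ is stable under $\Aut(G)$. Fourth, I would verify that $|\mathcal B| = |\IBr(G)|$ block by block and that the square submatrix of the decomposition matrix indexed by $\mathcal B$ is unitriangular, by transporting the unitriangularity from $\tilde G$ through the Clifford correspondence and the GGGR filtration.

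**The main obstacle.**

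The hardest part is the third step: making the choice of constituents canonical \emph{and} compatible with the block decomposition and with unitriangularity at the same time. Restricting a character of $\tilde G$ to $G$ can split it into several constituents permuted by $\hat T$, and different blocks of $G$ can lie over the same block of $\tilde G$; a choice that looks natural for one block may clash with the triangular order in an adjacent block, and $\Aut(G)$ may mix these. The resolution I anticipate is to do everything relative to the GGGR-ordering on the \emph{unipotent support}: since GGGRs are genuinely defined on $G$, are $\Aut(G)$-equivariant, and refine the partial order used for unitriangularity, one can run the triangularisation internally on $G$ — choosing at each step the constituent detected with multiplicity one by the appropriate GGGR — and equivariance then comes for free because every ingredient (unipotent classes up to $G$-conjugacy, additive characters of $\F_q$, Lusztig series up to $\tilde G/G$-twist) carries a compatible $\Aut(G)$-action. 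The remaining bookkeeping — central characters, the cyclic group $\tilde G/G$ possibly having order divisible by $\ell$, and the behaviour of field and diagonal automorphisms on the GGGR data — is then routine but must be checked case by case for $\SL_n$ and $\SU_n$ separately.
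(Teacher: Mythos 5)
There is a genuine gap: you have misidentified the main obstacle, and the idea that actually carries the paper's proof is absent from your proposal. You present the problem as one of \emph{choosing constituents} of $\Res_G\tc$ for $\tc$ in the standard basic set $\tme=\cup_s\me(\tg,s)$ of $\tg$ in an $\Aut(G)$-equivariant way (``the one whose GGGR multiplicity vector is lexicographically extremal, or by averaging''). But once the relevant numerical condition holds, the choice of constituents is \emph{forced} and automatically equivariant: for each $\eta\in\IBr(G\mid\te)$ there is a unique $\chi\in\Irr(G\mid\tth(\te))$ with $d_{\chi,\eta}=1$ (Theorem \ref{thm:1}), so no lexicographic or Glauberman-type selection is needed. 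The real obstruction is earlier and is numerical: in general one only has that $\kappa_G(\te)$ is a \emph{multiple} of $\kappa_G(\tth(\te))$ (Remark \ref{rmk:multiple}), and when $\ell$ divides $\gcd(n,q\mp 1)$ this multiple can be $>1$ (e.g.\ for the Steinberg character of $\GL_3(4)$ with $\ell=3$). In that case $\cup_{\te}\Irr(G\mid\tth(\te))$ simply has too few elements to be a basic set for $G$, and no selection procedure among constituents can repair this. You dismiss exactly this situation ($\ell$ dividing $|\tg/G|$) as ``routine bookkeeping'', whereas it is the entire new content of the paper beyond \cite{GeckII}.

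The missing idea is \emph{replacement of rows}: for each offending $\tc=\tc^{\tg}_{s,\lambda}\in\me(\tg,s)$ one must substitute a character $\tc'=\tc^{\tg}_{su,\delta}$ lying in a \emph{different} Lusztig series, where $u\in\tg_s$ is an $\ell$-element, chosen so that (i) $\mos_s(\tc')=\mos_s(\tc)$, which via GGGRs and the unipotent-support argument (Proposition \ref{lembschange1}) guarantees $\tc'$ has the same leading Brauer constituent with multiplicity one, so unitriangularity survives the exchange; (ii) $\kappa_G(\tc')=\kappa_G(\tth^{-1}(\tc))$, which requires the counting argument of Proposition \ref{lemeq} identifying $\kappa_G(\tth^{-1}(\tc))_{\ell}$ with $\gcd(|Z(\tg)|,\mos_s(\tc))_{\ell}$ --- a computation your proposal never addresses; and (iii) $\spec(u_\xi)$ is the full group of $\ell^a$-th roots of unity, independent of $\xi$, which makes the replacement $O(\tg)$-equivariant (Propositions \ref{prop:lelement1} and \ref{lemineq}) --- this is precisely where the paper departs from the Kleshchev--Tiep choice of a single primitive root, whose orbit under field automorphisms is what destroys stability. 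Without the replacement mechanism and the $\kappa_G$-count, your steps three and four cannot be completed.
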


Let $\tg=\GL_n(q)$ (resp.\ $\GU_n(q)$) and $G=\SL_n(q)$ (resp.\, $G=\SU_n(q)$). The unitriangular basic set that we obtain for $G$ is explicitly built from a unitriangular basic set for $\tg$. We develop in the first part, in a general setting, a condition that ensures the existence of a stable unitriangular basic set for $G$ provided that we already have one for $\tg$. The second part recalls some known facts about the character theory of general linear and unitary groups, and the last part is devoted to proving Theorem \ref{thm:A}.

I would like to thank Marc Cabanes for his support and suggestions of improvements regarding the manuscript. I am grateful to Gunter Malle  for inviting me to work on this project at T.U. Kaiserslautern with support of ERC Advanced Grant 291512. I also thank Olivier Brunat and Jay Taylor for fruitful discussions.

\renewcommand{\thetheorem}{\thesection.\arabic{theorem}}

\section{Stable unitriangular basic set for a normal subgroup}
\label{sec:unitr-decomp-matr}
\label{sec:unitrigeneral}

\subsection{Decomposition numbers, unitriangularity and stability under group action}
\label{sec:decomp-numb-unions}

Our basic reference for the representation theory of finite groups is \cite{NT}. Let $H$ be a finite group and $\ell$ be a prime number dividing the order of $H$. The set of ordinary irreducible characters of $H$ will be denoted by $\Irr(H)$ and its elements will be referred to simply as irreducible characters. The set of (irreducible $\ell$-) Brauer characters of $H$ will be denoted by $\IBr_{\ell}(H)$ or simply $\IBr(H)$. The set $\Irr(H)$ forms a basis of the space of class functions on $H$, which is endowed with its usual scalar product denoted by $\ll -,-  \rr$. The set $\IBr(H)$ is a basis of the subspace of class functions on $H$ vanishing outside the set of elements of order coprime to $\ell$ (also called $\ell'$-elements). The set of $\ell'$-elements of $H$ will be denoted by $H_{\ell'}$. We will denote by $d^1: \mathbb{Z}\Irr(H)\rightarrow \mathbb{Z}\IBr(H)$ the linear map consisting in multiplying  by the characteristic function of $H_{\ell'}$ (see \cite[Definition 5.7]{CabEn}), also called the \emph{decomposition map} (for $H$ is characteristic $\ell$). If $\chi\in\Irr (H)$, we have $d^1(\chi)\in{\mathbb{N}}\IBr(H)$ and we write (see \cite[Chapter 3, \S 6 Equation 6.2]{NT})

\begin{equation}
  d^1(\chi)=\sum_{\eta\in\IBr(H)}{d_{\chi,\eta}\eta}.
\end{equation}
The integers $d_{\chi,\eta}$ are the \emph{decomposition numbers}. Put together they form the \emph{decomposition matrix} of $H$
.

\begin{definition}\label{def:mel}
If $\me\subseteq\Irr (H)$, we denote by $\me_\ell\subseteq \IBr(H)$ 
the set of irreducible components of all $d^1(\chi)$'s for $\chi\in\me$, \textsl{i.e.}
\begin{displaymath}
  \mel:=\{\eta\in\IBr(H)\mid\exists \ \chi\in\me, \ d_{\chi,\eta}\neq 0\}.
\end{displaymath}
\end{definition}

We make the following definition which allows us to deal more easily with decomposition matrices (see \cite[Theorem 1.4]{KT}). It is equivalent to the existence of a square submatrix of the decomposition matrix being actually unitriangular.

\begin{definition}\label{def:unitri}
The group $H$ is said to have a \emph{unitriangular decomposition matrix} (in characteristic $\ell$) if there is a 
partial order relation $\leq$ on $\IBr(H)$ and an injective map 
$\th : \IBr(H)\rightarrow \Irr(H)$ such that the decomposition numbers $d_{\th 
(\eta ),\eta '}$ for $\eta$, $\eta '\in\IBr(H)$ are 0 unless $ \eta '\leq 
\eta $ and are 1 whenever $ \eta = \eta '$. (Note that any map $\th$ 
satisfying those last two conditions has to be injective.)
\end{definition}
 If this is the case then the subset $\th(\IBr(H))$ of $\Irr(H)$ will be called a \emph{unitriangular basic set} (in characteristic $\ell$) and,
in particular, for any $ \eta\in \IBr(H)$ we have
\begin{equation}\label{eq:red}
d^1(\th( \eta )) =  \eta +\sum_ { \eta '\in\IBr(H);\  \eta '\lneq 
\eta }d_{\th( \eta ) , \eta '} \eta ',
\end{equation}

where $\eta'\lneq \eta$ means $\eta'\leq \eta$ and $\eta'\neq \eta$.

Note also that if $H$ has a unitriangular decomposition matrix with respect to
$(\leq ,\th )$, it also has a unitriangular decomposition matrix with respect to any pair $(\leq ',\th )$ where $\leq 
'$ is any order relation implied by $\leq$. In particular, one has the 
property for any linear order 
refining our original order.

We prove the following lemma which will simplify checking that the map $\tth$ provided by Theorem \ref{thmMeinolf} is equivariant. It also allows one to talk about stable basic sets without referring to the equivariant map it is the image of, thus shortening statements (see Theorem \ref{thm:equiv}).

\begin{lem}\label{lem:equivar}
  Assume that the decomposition matrix of $H$ is unitriangular with respect to a pair $(\leq,\th)$. Let $K$ be a finite group acting on both the sets $\IBr(H)$ and $\Irr(H)$ and suppose that $d^1$ is $K$-equivariant (the action of the group $K$ being linearly extended). Let $\me=\th(\IBr(H))$. Then the map $\th$ is $K$-equivariant if and only if the set $\me$ is $K$-stable (\textit{i.e.}, for all $\chi\in\me,k\in K \quad k.\chi\in\me$). 
\end{lem}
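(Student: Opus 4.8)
The plan is to prove the two implications separately, noting first that one direction is essentially trivial. If $\th$ is $K$-equivariant, then for $\chi = \th(\eta) \in \me$ and $k \in K$ we have $k.\chi = k.\th(\eta) = \th(k.\eta)$, and since $k.\eta \in \IBr(H)$ this lies in $\th(\IBr(H)) = \me$; hence $\me$ is $K$-stable. So the entire content of the lemma is the converse: assuming $\me$ is $K$-stable, deduce that $\th$ commutes with the $K$-action.

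For the converse, the strategy is to exploit the defining unitriangularity property of $\th$ together with the $K$-equivariance of $d^1$, and then argue by induction along the order $\leq$. Fix $k \in K$ and $\eta \in \IBr(H)$. By \eqref{eq:red} we have $d^1(\th(\eta)) = \eta + \sum_{\eta' \lneq \eta} d_{\th(\eta),\eta'}\,\eta'$. Apply $k$: since $d^1$ is $K$-equivariant and $K$ acts linearly, $d^1(k.\th(\eta)) = k.\eta + \sum_{\eta' \lneq \eta} d_{\th(\eta),\eta'}\,(k.\eta')$. On the other hand, because $\me$ is $K$-stable, $k.\th(\eta) \in \me$, so $k.\th(\eta) = \th(\mu)$ for a unique $\mu \in \IBr(H)$ (uniqueness by injectivity of $\th$), and then \eqref{eq:red} gives $d^1(\th(\mu)) = \mu + \sum_{\mu' \lneq \mu} d_{\th(\mu),\mu'}\,\mu'$. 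Comparing the two expressions for $d^1(k.\th(\eta))$, which are equal vectors in $\mathbb{Z}\IBr(H)$, and using that $\IBr(H)$ is a basis, we want to conclude $\mu = k.\eta$, i.e.\ $\th(k.\eta) = k.\th(\eta)$.

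The key point to make this comparison rigorous is to transport the order: the action of $k$ is a bijection of $\IBr(H)$, so one can define a new partial order $\leq_k$ by $\alpha \leq_k \beta \iff k^{-1}.\alpha \leq k^{-1}.\beta$, under which $k.\eta'$ ranges exactly over the elements $\lneq_k k.\eta$ as $\eta'$ ranges over those $\lneq \eta$. Thus the first expression for $d^1(k.\th(\eta))$ exhibits $k.\eta$ as the "leading term" (coefficient $1$) with all other terms strictly below it in the order $\leq_k$, while the second exhibits $\mu$ as the leading term with all other terms strictly below $\mu$ in the order $\leq$. To force $\mu = k.\eta$ I would run an induction on, say, the number of elements $\preceq \eta$ in the order $\leq$ (or equivalently induct along $\leq$ from minimal elements upward): for $\eta$ minimal, $d^1(\th(\eta)) = \eta$ and the equality $d^1(k.\th(\eta)) = k.\eta = d^1(\th(\mu))$ together with the fact that $d^1(\th(\mu))$ has leading term $\mu$ forces $\mu = k.\eta$; for the inductive step, the induction hypothesis gives $k.\eta' = \th^{-1}(\text{something in }\me)$ handled already, so all the lower terms $k.\eta'$ on the left are of the form $\th(k.\eta')$-images already matched, and subtracting them identifies $\mu$ with $k.\eta$.

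The main obstacle is the bookkeeping in this last comparison: a priori the two triangular expansions are written with respect to \emph{different} orders ($\leq$ on the right, the transported order $\leq_k$ on the left), so one cannot immediately read off equality of leading terms. The cleanest way around this, which I would adopt, is to avoid matching leading terms directly and instead argue as follows. Since $k.\th(\eta) \in \me$ for every $\eta$, the map $\eta \mapsto \th^{-1}(k.\th(\eta))$ is a well-defined bijection $\sigma_k$ of $\IBr(H)$; one checks $\sigma$ is a $K$-action on $\IBr(H)$ making $\th$ equivariant for $\sigma$ by construction. It then suffices to show $\sigma_k = k\cdot$ as permutations of $\IBr(H)$. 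For this, apply $d^1$ to both sides of $k.\th(\eta) = \th(\sigma_k(\eta))$: the left side is $k.\eta + (\text{lower in }\le_k)$ and the right side is $\sigma_k(\eta) + \sum_{\eta'' \lneq \sigma_k(\eta)} d_{\th(\sigma_k\eta),\eta''}\eta''$. Now induct along $\leq$: if $\eta$ is $\le$-minimal then $d^1(\th(\eta))=\eta$, so $d^1(k.\th(\eta)) = k.\eta$, which must equal $\sigma_k(\eta) + (\text{lower})$, forcing $\sigma_k(\eta)=k.\eta$ (as $k.\eta$ has multiplicity one and nothing cancels it). For general $\eta$, by induction $\sigma_k(\eta') = k.\eta'$ for all $\eta' \lneq \eta$, and also (using $d^1$ equivariant, applied to \eqref{eq:red}) $d^1(k.\th(\eta)) = k.\eta + \sum_{\eta'\lneq\eta} d_{\th(\eta),\eta'} k.\eta'$; comparing with $d^1(\th(\sigma_k(\eta)))$ and using that the $k.\eta'$ for $\eta'\lneq\eta$ are exactly $\th^{-1}$-images at lower level, the unique basis element appearing that is not accounted for among already-matched lower terms is forced to be $k.\eta = \sigma_k(\eta)$. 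This completes the proof.
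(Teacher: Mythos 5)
Your proposal is correct and follows essentially the same route as the paper: apply the $K$-equivariance of $d^1$ and the $K$-stability of $\me$ to the unitriangular expansion \eqref{eq:red} to see that $\th^{-1}(k.\th(\eta))$ must be $k.\eta'$ for some $\eta'\leq\eta$, then induct upward along $\leq$ (on the set of elements below $\eta$) and use the injectivity of $\th$ to force $\eta'=\eta$. The detour through the transported order $\leq_k$ is unnecessary, but your final induction via the map $\sigma_k=\th^{-1}\circ k\circ\th$ is exactly the paper's argument.
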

\begin{proof}
The "only if" part of the statement is obvious. For the rest of this proof we set, for any $\eta\in\IBr(H)$,
  \begin{displaymath}
    \mathcal{I}_{\eta}:=\{\eta'\in\IBr(H)\mid\eta'\leq\eta\}.
  \end{displaymath}
Let $\eta\in\IBr(H)$ and $k\in K$. The $K$-equivariance of the map $d^1$ and the $K$-stability of $\me$ shows that there exists $\eta'\in\mathcal{I}_{\eta}$ such that
  \begin{displaymath} 
    \Theta^{-1}(k.\Theta(\eta))=k.\eta'.
  \end{displaymath}
Hence the result follows by induction on the cardinality of the set $\mathcal{I}_{\eta}$ using the injectivity of the map $\Theta$.
\end{proof}

\subsection{Unitriangularity and normal inclusion}
\label{sec:2.2}
 In what follows $G\lhd \tilde G$ is a normal inclusion of finite groups with cyclic 
factor group $\tilde G/G$, and $\ell$ is a prime number dividing the order of $G$. Recall that by Clifford theory, restriction to $G$ of an irreducible ordinary or Brauer character of $\tg$ is multiplicity-free (see \cite[Chapter 3, Theorems 3.1 and 5.7]{NT}). Denote by $A$ the abelian group of linear characters 
of $\tg$ that are trivial on $G$. It acts by multiplication on both the sets
$\IG$ and $\ILG$; the orbits coincide with the 
fibers of the map $\Res_G$ (see \cite[Lemma 3.7]{KT}). Elements in a finite group $H$ whose order is a power of $\ell$ will be called $\ell$-elements and the corresponding set is denoted by $H_{\ell}$. Note that the group $A_{\ell}$ acts trivially on $\ILG$. We also let $\tg$ act on both 
$\Irr(G)$ and $\IBr(G)$ by conjugation.

For $\tc\in\IG$ (resp.\  
$\te\in\ILG$), we denote by $\Irr (G\mid\tc)$ (resp.\ 
$\IBr(G\mid\te)$) the set of irreducible components of its 
restriction to $G$. Following \cite[\S 3.1]{KT}, we make the following definition.

\begin{definition}
  For $\tc\in\IG$ (resp.\ $\te\in\ILG$), we let $\kappa_G(\tc)$ (resp.\ $\kappa_G(\te)$) denote the cardinality of the set $\Irr(G \mid \tc)$ (resp.\ $\IBr(G \mid \te)$).
\end{definition}

We now have the following to obtain a unitriangular basic set for $G$, assuming $\tg$ has one.
\begin{theorem}\label{thm:1}
Assume $\tg$ has a unitriangular decomposition matrix for the pair $(\leq ,\tth )$. Moreover, assume that  for 
any $\te\in \ILG$,  
\begin{displaymath}
\kappa_G(\te) = \kappa_G(\th(\te)) .\leqno({\rm H})
\end{displaymath}
 
Then $G$ has a unitriangular decomposition matrix for a pair $(\leq_G,\th )$ which can 
be chosen as follows.

Assuming $\leq$ is a linear order (or denoting by the same symbol any 
linear order refining our initial $\leq$), let $\tilde{R}$ be the 
representative system of $\ILG$ mod the action of $ A_{\ell'}$ where one 
selects in each class the $\leq$-minimal element. If $\eta$, $\eta '\in 
\IBr(G)$, we say $\eta\leq_G\eta '$ if and only if 
$\eta =\eta '$ or there are $\te$, $\te '\in \tilde{R}$ with $\te \lneq\te '$ and $\eta\in \IBr (G\mid \te)$, $\eta '\in 
\IBr (G\mid \te')$.

Moreover $\th: \IBr(G)\rightarrow \Irr (G)$ can be chosen to be $\tg$-equivariant.
\end{theorem}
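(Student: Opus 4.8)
The plan is to build the map $\th$ on $\IBr(G)$ fiber by fiber over the map $\Res_G$, using the already-constructed $\tth$ on $\IBr(\tg)$ together with hypothesis (H) to match cardinalities. First I would fix notation: for $\te\in\ILG$ write $\tth(\te)\in\IG$, and recall from Clifford theory that $\IBr(G\mid\te)$ is an $A_{\ell'}$-orbit (since $A_\ell$ acts trivially on $\ILG$, the fiber of $\Res_G$ through $\te$ as a subset of $\ILG$ is exactly the $A_{\ell'}$-orbit of $\te$) and likewise $\Irr(G\mid\tth(\te))$ is an $A$-orbit. Picking the representative system $\tilde R$ of $\ILG$ modulo $A_{\ell'}$ with $\leq$-minimal representatives, every $\eta\in\IBr(G)$ lies in a unique $\IBr(G\mid\te)$ with $\te\in\tilde R$, so the relation $\leq_G$ is well defined, and one checks it is a partial order (reflexive by fiat; transitivity and antisymmetry follow because $\leq$ is a linear — hence transitive, antisymmetric — order on $\tilde R$ and the fibers $\IBr(G\mid\te)$ for distinct $\te\in\tilde R$ are disjoint).

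Next I would construct $\th$. Fix $\te\in\tilde R$. By (H), $\kappa_G(\te)=\kappa_G(\tth(\te))$, so $\IBr(G\mid\te)$ and $\Irr(G\mid\tth(\te))$ have the same cardinality; choose any bijection between them, and declare $\th$ to restrict to this bijection on each such fiber. Since the fibers $\IBr(G\mid\te)$, $\te\in\tilde R$, partition $\IBr(G)$ and the target fibers $\Irr(G\mid\tth(\te))$ are pairwise disjoint (they are restrictions of the distinct, hence restriction-distinct since $\tth$ is injective and the $\tth(\te)$ for $\te\in\tilde R$ lie in distinct $A$-orbits — this needs a small argument, see below), the resulting $\th:\IBr(G)\to\Irr(G)$ is injective with image the $A$-stable set $\me=\bigsqcup_{\te\in\tilde R}\Irr(G\mid\tth(\te))$. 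To make $\th$ moreover $\tg$-equivariant, I would first arrange $\te\mapsto\tth(\te)$ to be compatible with the $\tg$-action on orbits, then build the fiberwise bijections compatibly with the stabilizers; concretely, $\tg$ acts on $\IBr(G)$ and $\Irr(G)$ and these actions factor through $A$ (conjugation by $\tg$ on characters of the normal subgroup $G$ twists by linear characters trivial on $G$), so it suffices to choose, within each $\tg$-orbit of fibers, the bijections on one representative fiber and transport them — the stabilizers of $\eta$ and of $\th(\eta)$ can be matched because both are controlled by the same subgroup of $A$ governing the ramification, and here one uses that $A_\ell$ acts trivially on $\IBr(G)$ while the $\ell'$-part of the stabilizer behaves symmetrically on both sides thanks to (H).

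The remaining, and I expect main, step is the unitriangularity estimate: for $\eta\in\IBr(G)$ with $\te\in\tilde R$ its chosen representative, one must show $d^1_G(\th(\eta))=\eta+\sum_{\eta'\lneq_G\eta}d_{\th(\eta),\eta'}\eta'$. The idea is to restrict the known unitriangular expansion $d^1_{\tg}(\tth(\te))=\te+\sum_{\te'\lneq\te}d_{\tth(\te),\te'}\te'$ from $\tg$ to $G$: since $d^1$ commutes with restriction, $\Res_G d^1_{\tg}(\tth(\te)) = d^1_G(\Res_G\tth(\te))$, and $\Res_G\tth(\te)=\sum_{\chi\in\Irr(G\mid\tth(\te))}\chi$ while $\Res_G\te=\sum_{\eta_0\in\IBr(G\mid\te)}\eta_0$ and similarly for each $\te'$. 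Summing over the fiber $\Irr(G\mid\tth(\te))$ gives that $\sum_{\chi}d^1_G(\chi)$ is supported on $\bigcup_{\te'\leq\te,\,\te'\in\tilde R}\IBr(G\mid\te')$ with the $\IBr(G\mid\te)$-part having all multiplicities equal (by $A$-conjugacy of the $\chi$'s and $A_{\ell'}$-invariance) and equal to the common $\te$-multiplicity, namely $1$. The delicate point — and the crux of the whole theorem — is to pass from this statement about the \emph{sum} over the fiber to a statement about each individual $\th(\eta)$: here one invokes that $\th$ restricted to the fiber is a bijection onto $\Irr(G\mid\tth(\te))$ and that the $A_{\ell'}$-orbit structure forces each $d^1_G(\th(\eta))$ to contain exactly one element of $\IBr(G\mid\te)$ with coefficient $1$ (its partner $\eta$), while all its other constituents lie in fibers $\te'\lneq\te$ and hence are $\lneq_G\eta$ — this is where (H) is essential, since it guarantees the fiberwise bijection exists and that no "collapsing" or "splitting" discrepancy between the $\tg$-side and the $G$-side can spoil the diagonal coefficient. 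I would carry this out by an argument tracking $A_{\ell'}$-orbits on both $\IBr(G)$ and $\Irr(G)$ and using that the decomposition map intertwines the $A_{\ell'}$-actions, reducing the per-element claim to the per-orbit claim already obtained.
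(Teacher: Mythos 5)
Your overall strategy---restrict the unitriangular expansion $d^1(\tth(\te))=\te+\sum_{\te'\lneq\te}d_{\tth(\te),\te'}\te'$ to $G$, use the $\leq$-minimality of $\te$ in its $A_{\ell'}$-orbit to conclude that no $\eta\in\IBr(G\mid\te)$ can reappear in $\Res_G\te'$ for $\te'\lneq\te$, and use (H) to get a fiberwise bijection---is the same as the paper's. But there is a genuine gap in your construction of $\th$: you ``choose any bijection'' between $\IBr(G\mid\te)$ and $\Irr(G\mid\tth(\te))$ and only afterwards try to verify the diagonal coefficients. An arbitrary bijection fails: the restriction argument shows that each $\eta\in\IBr(G\mid\te)$ occurs with coefficient $1$ in $d^1(\chi)$ for exactly \emph{one} $\chi$ in the target fiber and with coefficient $0$ for all the others, so if your bijection pairs $\eta$ with any other $\chi$ you get $d_{\th(\eta),\eta}=0$ and unitriangularity is destroyed (already a two-element fiber whose decomposition block is a non-identity permutation matrix shows this). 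The map must be \emph{defined} by the decomposition numbers: $\th(\eta):=$ the unique $\chi\in\Irr(G\mid\tth(\te))$ with $d_{\chi,\eta}\neq 0$. Surjectivity of this map then follows because both fibers are single orbits under $\tg$-conjugation and $d_{\chi,\eta}=d_{{}^{\sigma}\chi,{}^{\sigma}\eta}$, and injectivity follows from (H) by counting; note that (H) is needed precisely for injectivity, not for the diagonal coefficient being $1$, which you misattribute to it (without (H) one still gets a surjection and $\kappa_G(\te)$ is merely a multiple of $\kappa_G(\tth(\te))$).

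The second problem is your equivariance argument, which rests on the false assertion that the conjugation action of $\tg$ on $\Irr(G)$ and $\IBr(G)$ ``factors through $A$'': the group $A$ acts on characters of $\tg$ by multiplication, while $\tg$ acts on characters of $G$ by conjugation; these are different (Clifford-dual) actions, and $\Irr(G\mid\tth(\te))$ is a $\tg$-orbit, not an $A$-orbit. Once $\th$ is defined canonically via decomposition numbers as above, its $\tg$-equivariance is automatic from the invariance of decomposition numbers under automorphisms, and no transport-of-structure construction or matching of stabilisers is needed. With these two repairs your outline becomes the paper's proof.
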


\begin{rmk} \label{rmk:multiple}
Note that without the assumption (H), the 
unitriangularity of the decomposition matrix of $\tg$ would 
imply that $\kappa_G(\te) $ is a \textbf{multiple} of $\kappa_G(\tth(\te))$ (see the proof below).
\end{rmk}

\begin{proof}
We can assume $\leq$ to be a linear 
order. It is clear that the relation $\leq_G$ defined in the statement of the theorem is 
a partial order. 

If $\te\in\tilde{R}$, one has
  from Equation \ref{eq:red}
\begin{displaymath} 
d^1(\tth(\te )) = \te +\sum_ {\te '\in\ILG ; \ \te 
'\lneq\te }d_{\tth(\te ) ,\te '}\te '.
\end{displaymath}
Using Clifford theory, we get by restricting to $G$
\begin{equation}\label{eq:proof}
  \sum_{\chi\in \Irr (G\mid\tth (\te )) }d^1(\chi) = 
\sum_{\eta\in \IBr (G\mid \te) }\eta +\sum_{\te 
'\in\ILG    ;\ \te '\lneq\te }d_{\tth(\te) ,\te 
'}\Res_G \te '  .
\end{equation}
In Equation \ref{eq:proof}, a given $\eta\in \IBr (G\mid \te) $ is present only 
once in the whole right hand side since it cannot be present in the last 
sum $\sum_ {\te '\in\ILG;\ \te '\lneq\te 
}d_{\tth(\te ) ,\te '}\Res_G \te '  $. Indeed the latter 
would imply that $\eta$ is present in some $\Res_G\te '$ with 
$\te ' \lneq \te$. Since $\eta$ is also in $\Res_G\te$, 
this implies that $\te$ and $\te'$ are in the same $A_{\ell'}$-orbit. 
By the definition of $\tilde{R}$, we then have $\te\leq\te '$, a 
contradiction. We conclude that for any $\eta\in\IBr(G\mid\te)$, there exists a unique $\chi\in\Irr(G\mid\tth(\te))$ such that $d_{\chi,\eta}\neq 0$ or equivalently $d_{\chi,\eta}=1$.

The above assignement $\eta\mapsto\chi$ defines a map $\th 
\colon\IBr (G\mid \te) \to  \Irr (G\mid\tth (\te))$ and 
which is $\tg$-equivariant since $d_{\chi ,\eta}=d_{\leftexp{\sigma}{\chi} 
,\leftexp{\sigma}{\eta}}$ for any $\sigma\in \Aut(G)$. Note that by Clifford 
theory both $\IBr(G\mid \te) $ and $  \Irr (G\mid\tth (\te 
))$ are a $\tg$-orbit and thus the map $\th$ is surjective. The existence of a $\tg$-equivariant map $\th$ 
then implies the divisibility of Remark \ref{rmk:multiple}. Moreover (H) implies 
that $\th$ maps $\IBr (G\mid \te)$ bijectively on $\Irr (G\mid\tth 
(\te))$.

The various sets $\IBr (G\mid\te)$ for $\te$ ranging over 
$\tilde{R}$ partition the set $\ILG$. So we define $\th$ as above 
independently on each set.

It remains to show that for any $\eta $, $\eta ''\in\ILG$ we have 
$d_{ \th (\eta  ) ,\eta ''} = 0$ unless $\eta ''\leq_G\eta$.
Assume we have $d_{ \th (\eta ) ,\eta ''} \not= 0$. By construction of the map $\th$ we can assume that $\eta ''$ is present in
the last sum in Equation \ref{eq:proof}, more precisely that it is present in some $\Res_G\te '$ with $\te 
'\lneq\te$. By the definition of $\tilde{R}$, letting $\{ \te ''\} =\tilde{R}\cap A_{\ell'}.\te '$, we then have $\te ''\leq \te '\lneq\te$, 
and therefore $\te ''\lneq\te$. Since $\eta ''\in \IBr (G\mid 
\te')=\IBr (G\mid \te'')$, we get $\eta ''\leq_G\eta$ by our 
definition of $\leq_G$.
\end{proof}

\begin{rmk}
  Note that the basic set obtained for $G$ is not explicit as the image of the map $\th$ is the set $\cup_{\te\in\tilde{R}}{\Irr(G\mid\tth(\te))}$ and the set $\tilde{R}$ is not known. However if the map $\tth$ is $A_{\ell'}$-equivariant then the equality $\Irr(G\mid\tth(\te))=\Irr(G\mid\tth(\te'))$ for any $\te\in\ILG$ and $\te'\in A_{\ell'}.\te$ would imply that $\Image(\th)=\cup_{\te\in\IBr(\tg)}{\Irr(G\mid\tth(\te))}=\cup_{\tc\in\tth(\ILG)}\Irr(G\mid\tc)$. Thus in this case the unitriangular basic set for $G$ is explicitly constructed from that of $\tg$. For this reason in what follows we will suppose that the map $\tth$ is $A_{\ell'}$-equivariant.
\end{rmk}

\subsection{Unitriangularity, normal inclusion and stability}

We continue with a normal inclusion $G\triangleleft \tg$ such that $\tg/G$ is a cyclic group and $\ell$ a prime number dividing the order of $G$. We will now give an adapted setup to prove Theorem \ref{thm:A} in the framework provided by \cite{Geck}; given a unitriangular basic set and a partial pre-order on $\IG$, we prove an equivariant version of Theorem \ref{thm:1} (see Theorem \ref{thm:equiv}) that will be used to prove Theorem \ref{thm:A}. Finally, to prove Theorem \ref{thm:A} we will have to modify a given basic set and we first introduce some terminology to be able to focus on characters instead of equivariant maps.

Suppose that we are given a partial pre-order $\leq$ on the set $\IG$ and an injective map $\tth : \IBr(\tg) \rightarrow \Irr(\tg)$, such that $\leq$ is an actual  partial order on $\tth(\IBr(\tg))$. Then we naturally obtain a partial order on the set $\IBr(\tg)$ that we will still denote by $\leq$. Suppose that the pair $(\leq, \tth)$ makes the decomposition matrix of $\tg$ unitriangular and let $\tme=\tth(\ILG)$ be the corresponding unitriangular basic set.

\begin{definition}\label{def:exchange}
    Let $\tc\in\tme, \tc'\in\Irr(\tg)$. We say that $\tc$ is \textsl{replaceable} by $\tc'$ if:
    \begin{itemize}
      \item there exists a unique $\leq$-maximal element in $\{\tilde{\eta}\in\IBr(\tg) \mid d_{\tc', \tilde{\eta}}\neq 0\}$,
      \item that maximal element is $\tth^{-1}(\tc)$, and $d_{\tc', \tth^{-1}(\tc)}=1$.
    \end{itemize}
If moreover we have the equality $\kappa_G(\tc')=\kappa_G(\tth^{-1}(\tc))$, we will say that $\tc$ is \textsl{$\kappa_G$-replaceable} by $\tc'$ (this relation is not symmetric).
\end{definition}

\begin{rmk}\label{rmk:exchange}
    Note that if $\tc$ is replaceable by $\tc'$ then the map $\tth':\IBr(\tg)\rightarrow\Irr(\tg), \te\mapsto \tth(\te)$ if $\te\neq\tth^{-1}(\tc)$ and $\tth'(\tth^{-1}(\tc))=\tc'$ also makes the decomposition matrix of $\tg$ unitriangular for the order $\leq$ on the set $\IBr(\tg)$. This corresponds to exchanging the two rows of the decomposition matrix indexed by $\tc$ and $\tc'$, such that this change preserves unitriangularity. Note also that if $\tc$ is replaceable by $\tc'$ then $\kappa_G(\tc')$ divides $\kappa_G(\tth^{-1}(\tc))$ by Remark \ref{rmk:multiple}. This provides a useful tool to get a lower bound on $\kappa_G(\tth^{-1}(\tc))$ (see Proposition \ref{lemineq}).
\end{rmk}

With those definitions Theorem \ref{thm:1} can be rewritten as follows:
\begin{theorem}
  Assume that $\tg$ has a unitriangular basic set and that all its elements are $\kappa_G$-replaceable. Then $G$ has a unitriangular basic set.
\end{theorem}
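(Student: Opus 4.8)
The plan is to reduce this restatement of Theorem \ref{thm:1} to an application of that theorem, the only real content being the translation between the notion of $\kappa_G$-replaceability and the hypothesis (H). First I would note that if every element $\tc$ of the given unitriangular basic set $\tme = \tth(\ILG)$ is $\kappa_G$-replaceable, say by $\tc'$, then in particular it is replaceable, so by Remark \ref{rmk:exchange} the modified map $\tth'$ sending $\tth^{-1}(\tc) \mapsto \tc'$ (and fixing all other Brauer characters) still makes the decomposition matrix of $\tg$ unitriangular for the same order $\leq$ on $\IBr(\tg)$. Doing this row exchange simultaneously for every $\te \in \ILG$ — i.e. setting $\tth'(\te) = (\tth(\te))'$ for each $\te$ — yields a new pair $(\leq, \tth')$ for which the decomposition matrix of $\tg$ is still unitriangular; one has to check the replacements do not interfere with one another, but this is immediate since each exchange only alters the single row indexed by $\tc$ and the order on $\IBr(\tg)$ is unchanged.

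Next I would verify hypothesis (H) for the new map $\tth'$: for $\te \in \ILG$ we have $\tth'(\te) = (\tth(\te))'$, and the definition of $\kappa_G$-replaceability gives exactly $\kappa_G((\tth(\te))') = \kappa_G(\tth^{-1}(\tth(\te))) = \kappa_G(\te)$. Hence $\kappa_G(\tth'(\te)) = \kappa_G(\te)$ for all $\te \in \ILG$, which is precisely condition (H) for the pair $(\leq, \tth')$. Applying Theorem \ref{thm:1} to $\tg \rhd G$ with the unitriangular pair $(\leq, \tth')$ satisfying (H) then produces a pair $(\leq_G, \th)$ making the decomposition matrix of $G$ unitriangular, and $\th(\IBr(G))$ is the desired unitriangular basic set for $G$.

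The main obstacle — such as it is — is bookkeeping about what "still unitriangular for the order $\leq$" means after the simultaneous replacement: Remark \ref{rmk:exchange} is phrased for a single replacement, so I would want to observe that because the partial order on $\IBr(\tg)$ induced by $(\leq, \tth)$ and by $(\leq, \tth')$ is literally the same (the replacement changes only the target map, not the order on Brauer characters), the unitriangularity conditions $d_{\tth'(\te), \te'} = 0$ unless $\te' \le \te$ and $d_{\tth'(\te), \te} = 1$ decouple across $\te$ and follow row by row from the definition of replaceability. Strictly speaking one should also make sure the order $\le$ is still an actual partial order on $\tth'(\IBr(\tg))$, but since we transport $\le$ along $\tth'$ exactly as we did along $\tth$ there is nothing to check. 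So this restatement is essentially a corollary: feed the $\kappa_G$-replaced basic set into Theorem \ref{thm:1}.
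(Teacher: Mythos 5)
Your proposal is correct and follows exactly the route the paper intends (the paper gives no explicit proof, presenting the statement as a rewriting of Theorem \ref{thm:1} via Remark \ref{rmk:exchange}): perform the row replacements simultaneously to get $\tth'$, note that unitriangularity is checked row by row since the order on $\IBr(\tg)$ is unchanged, observe that $\kappa_G$-replaceability is precisely hypothesis (H) for $(\leq,\tth')$, and apply Theorem \ref{thm:1}. Your extra care about non-interference of the replacements and about injectivity of $\tth'$ is sound and fills in the only details the paper leaves implicit.
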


We now introduce an equivariant version of this theorem that we will use to prove Theorem \ref{thm:A}.

Let $O$ be a subgroup of $\Aut(\tg)_G$ (where $\Aut(\tg)_G$ is the subgroup of $\Aut(\tg)$ stabilising $G$). It acts on any of the sets $\ILG, \IG, \Irr(G)$ and $\IBr(G)$ by the formula:
\begin{equation}
\label{eq:actionofaut}
  \forall F_0\in O,\quad F_0(\varphi)= \varphi \circ F_0^{-1}.
\end{equation}

Recall that we denote by $A$ the group of linear characters of $\tg$ that are trivial on $G$. For any $\tc\in\IG$, denote by $(A_{\ell'}\rtimes O)_{\tc}$ the group stabiliser of $\tc$ in $A_{\ell'}\rtimes O$. It is clear that the map $d^1$ is $A_{\ell'}\rtimes O$-equivariant. 
\begin{definition}\label{def:Oexchange}
    Let $\tc\in\tme, \tc'\in\Irr(\tg)$. We say that $\tc$ is \textsl{$O$-replaceable} by $\tc'$ if:
    \begin{itemize}
      \item $\tc$ is $\kappa_G$-replaceable by $\tc'$, and
      \item $(A_{\ell'}\rtimes O)_{\tc}=(A_{\ell'}\rtimes O)_{\tc'}$.
    \end{itemize}
\end{definition}

From now on and until the end of this section we will assume that $\tme$ is $A_{\ell'}\rtimes O$-stable, and this next lemma is clear using the equivariance of the map $\tth$ (see Lemma \ref{lem:equivar}).

\begin{lem}\label{lem:equivex}
    Let $\tc\in\tme, \tc'\in\Irr(\tg)$ and assume that $\tc$ is $O$-replaceable by $\tc'$. Then for all $\hat{z}\in A_{\ell'},F_0\in O$, $\hat{z}F_0(\tc)$ is $\kappa_G$-replaceable by $\hat{z}F_0(\tc')$  .
\end{lem}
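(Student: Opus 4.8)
The plan is to unwind the definitions and check that each of the three conditions in the definition of ``$\kappa_G$-replaceable'' transports along the action of an element $\hat z F_0 \in A_{\ell'}\rtimes O$. Write $g = \hat z F_0$ and set $\te_0 = \tth^{-1}(\tc)$. By hypothesis $\tc$ is $O$-replaceable, hence $\kappa_G$-replaceable, by $\tc'$; we must show $g(\tc)$ is $\kappa_G$-replaceable by $g(\tc')$. First I would use the $A_{\ell'}\rtimes O$-equivariance of $d^1$: since $d^1$ is linear and $g$-equivariant, for any $\tvf\in\Irr(\tg)$ and $\tilde\eta\in\IBr(\tg)$ we have $d_{g(\tvf),g(\tilde\eta)} = d_{\tvf,\tilde\eta}$. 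Consequently $g$ induces a bijection between $\{\tilde\eta \mid d_{\tc',\tilde\eta}\neq 0\}$ and $\{\tilde\eta \mid d_{g(\tc'),\tilde\eta}\neq 0\}$.

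Next I would handle the order-theoretic condition: the first bullet of Definition \ref{def:exchange} requires that $\{\tilde\eta \mid d_{\tc',\tilde\eta}\neq 0\}$ has a unique $\leq$-maximal element, namely $\te_0$, with $d_{\tc',\te_0}=1$. Here the key point is that, since $\tme$ is $A_{\ell'}\rtimes O$-stable and $\tth$ is $A_{\ell'}\rtimes O$-equivariant (by Lemma \ref{lem:equivar}), the map $g$ preserves the partial order $\leq$ on $\IBr(\tg)$, at least on the relevant set: indeed $\leq$ on $\IBr(\tg)$ is transported from the order on $\tme$ via $\tth$, and $g$ commutes with $\tth$ and permutes $\tme$. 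Thus the $g$-image of the unique $\leq$-maximal element $\te_0$ of $\{\tilde\eta \mid d_{\tc',\tilde\eta}\neq 0\}$ is the unique $\leq$-maximal element of the $g$-image set $\{\tilde\eta \mid d_{g(\tc'),\tilde\eta}\neq 0\}$, and that image is $g(\te_0) = g(\tth^{-1}(\tc)) = \tth^{-1}(g(\tc))$, with $d_{g(\tc'),\tth^{-1}(g(\tc))} = d_{\tc',\te_0} = 1$. So $g(\tc)$ is replaceable by $g(\tc')$. It also remains to check $g(\tc)\in\tme$, which is immediate from $A_{\ell'}\rtimes O$-stability of $\tme$.

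Finally I would check the $\kappa_G$-equality: we need $\kappa_G(g(\tc')) = \kappa_G(\tth^{-1}(g(\tc))) = \kappa_G(g(\te_0))$. Since $\kappa_G(\tvf) = |\Irr(G\mid\tvf)|$ counts the number of irreducible constituents of $\Res_G\tvf$, and since an element of $A_{\ell'}\rtimes O = A_{\ell'}\rtimes O$ stabilises $G$ and acts compatibly on $\Irr(\tg)$ and $\Irr(G)$ (the action \eqref{eq:actionofaut} is compatible with $\Res_G$ up to the induced action on $\IBr(\tg)$, $\Irr(G)$), the number $\kappa_G$ is invariant under $g$: $\kappa_G(g(\tvf)) = \kappa_G(\tvf)$ for all $\tvf$ and all $g\in A_{\ell'}\rtimes O$. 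Hence $\kappa_G(g(\tc')) = \kappa_G(\tc') = \kappa_G(\te_0) = \kappa_G(g(\te_0))$, using the $\kappa_G$-replaceability of $\tc$ by $\tc'$ in the middle equality. This yields that $g(\tc)$ is $\kappa_G$-replaceable by $g(\tc')$, as claimed.

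The only mildly delicate point — and the one I would spell out most carefully — is the second paragraph: justifying that $g$ preserves the partial order $\leq$ on $\IBr(\tg)$ well enough to transport the uniqueness of the maximal element. This uses crucially that $\tth$ is $A_{\ell'}\rtimes O$-equivariant and that $\tme$ is $A_{\ell'}\rtimes O$-stable, both of which are standing assumptions in this part of the section; everything else is a direct consequence of the $A_{\ell'}\rtimes O$-equivariance of $d^1$ together with the invariance of $\kappa_G$ under the group action. I do not expect any genuine obstacle here — the statement is essentially a formal equivariance bookkeeping lemma, and the phrase ``this next lemma is clear'' in the preamble confirms that the author views it as routine.
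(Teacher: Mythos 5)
Your proof is correct and is essentially the paper's own argument: the paper offers no proof beyond the remark that the lemma ``is clear using the equivariance of the map $\tth$'', and what you have written is precisely the detailed unwinding of that equivariance (of $d^1$, of $\tth$, and of $\kappa_G$) under the $A_{\ell'}\rtimes O$-action. The one genuinely delicate point you flag --- that the action must preserve the order $\leq$ on $\IBr(\tg)$, so that the unique maximal element of $\{\tilde{\eta}\mid d_{\tc',\tilde{\eta}}\neq 0\}$ is carried to the unique maximal element of its image --- is left entirely implicit in the paper; it does hold in the intended application (where $\leq$ is built from Lusztig series and the dominance order, both permuted equivariantly by Proposition \ref{prop:equivariancechar}), and your treatment of it is, if anything, more careful than the source.
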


The assumption of $O$-replaceability allows one to exchange characters in an $A_{\ell'}\rtimes O$-compatible way in the sense of this next lemma.
\begin{lem}\label{lem:equivfinal}
    Assume that for all $\tc\in\tme$, there exists $\tc'\in\Irr(\tg)$ such that $\tc$ is $O$-replaceable by $\tc'$. Then there exists a subset $\tme'\subseteq\Irr(\tg)$ and an $A_{\ell'}\rtimes O$-equivariant bijection $\Delta : \tme \rightarrow \tme'$ such that for all $\tc\in\tme$, $\tc$  is  $\kappa_G$-replaceable  by $\Delta(\tc)$.
\end{lem}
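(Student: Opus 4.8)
The plan is to build the bijection $\Delta$ orbit by orbit on $\tme$ under the action of $A_{\ell'}\rtimes O$. First I would fix a set of representatives of the $A_{\ell'}\rtimes O$-orbits in $\tme$; this is legitimate since $\tme$ is assumed $A_{\ell'}\rtimes O$-stable. For each representative $\tc$ the hypothesis provides some $\tc'\in\Irr(\tg)$ such that $\tc$ is $O$-replaceable by $\tc'$; I fix one such $\tc'$ and decree $\Delta(\tc):=\tc'$. I then propagate the definition along the orbit by the formula $\Delta(\hat zF_0(\tc)):=\hat zF_0(\tc')$ for $\hat z\in A_{\ell'}$ and $F_0\in O$, and set $\tme'$ to be the union of the resulting orbits.

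\textbf{Key steps.} The first thing to check is that $\Delta$ is \emph{well defined} on each orbit: if $\hat zF_0(\tc)=\hat z'F_0'(\tc)$ then $(\hat z')^{-1}\hat z\cdot F_0'^{-1}F_0\in (A_{\ell'}\rtimes O)_{\tc}$, which equals $(A_{\ell'}\rtimes O)_{\tc'}$ by the second bullet of Definition \ref{def:Oexchange}, hence $\hat zF_0(\tc')=\hat z'F_0'(\tc')$; so the image does not depend on the chosen group element. The same computation, run in reverse, shows $\Delta$ is injective on each orbit, and since distinct $A_{\ell'}\rtimes O$-orbits of $\tme$ are sent into the orbits of the (a priori distinct) elements $\tc'$, one must also observe that these target orbits are pairwise disjoint: this again follows because an equality $\hat zF_0(\tc_1')=\hat wF_1(\tc_2')$ for two representatives $\tc_1,\tc_2$ would, after combining with the equality of stabilisers, force $\tc_1$ and $\tc_2$ to lie in the same $A_{\ell'}\rtimes O$-orbit of $\tme$, contrary to the choice of representatives. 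Thus $\Delta:\tme\to\tme'$ is a bijection, and it is $A_{\ell'}\rtimes O$-equivariant by construction. Finally, for the $\kappa_G$-replaceability assertion: for a representative $\tc$ it holds by the choice of $\tc'$; for a general element $\hat zF_0(\tc)$ of $\tme$ it follows from Lemma \ref{lem:equivex}, which says precisely that $O$-replaceability of $\tc$ by $\tc'$ implies $\kappa_G$-replaceability of $\hat zF_0(\tc)$ by $\hat zF_0(\tc')=\Delta(\hat zF_0(\tc))$.

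\textbf{Main obstacle.} The only delicate point is the consistency check that the targets of different orbits do not collide — that is, that gluing the orbitwise bijections yields a global injection. Everything hinges on the equality of stabilisers $(A_{\ell'}\rtimes O)_{\tc}=(A_{\ell'}\rtimes O)_{\tc'}$ built into $O$-replaceability; without it the assignment $\Delta$ would be only well defined up to the ambiguity of the stabiliser, and the image $\tme'$ would not be a genuine set of characters in bijection with $\tme$. Once that equality is used as above, the rest is bookkeeping with the semidirect product $A_{\ell'}\rtimes O$ and an appeal to Lemma \ref{lem:equivex}.
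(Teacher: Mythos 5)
Your proof is correct and follows essentially the same route as the paper's (choose orbit representatives, set $\Delta(\tc)=\tc'$, extend equivariantly, and invoke Lemma \ref{lem:equivex}); you are in fact more careful than the paper, which does not explicitly address well-definedness or the disjointness of the target orbits. One small caveat on that last point: the equality of stabilisers alone does not force two representatives with $A_{\ell'}\rtimes O$-conjugate images to be themselves conjugate --- what actually does the work is the uniqueness clause in Definition \ref{def:exchange} (a character replaceable by a fixed $\tc'$ is determined as $\tth$ of the unique $\leq$-maximal Brauer constituent of $\tc'$), combined with Lemma \ref{lem:equivex}.
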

\begin{proof}
    Let $\hat{\me}$ be a representative system of $\tme$ mod the action of $A_{\ell'}\rtimes O$. For each $\tc\in\hat{\me}$ let $\Delta(\tc)=\tc'$. Then by hypothesis, for each $\tc\in\hat{\me}$ the $A_{\ell'}\rtimes O$-orbits of $\tc$ and $\Delta(\tc)$ are isomorphic as $A_{\ell'}\rtimes O$-sets. Thus we can extend $\Delta$ on $\tme$ into an $A_{\ell'}\rtimes O$-equivariant map by setting $\Delta(\hat{z}F_0(\tc))=\hat{z}F_0(\Delta(\tc))$ for any $\tc\in\hat{\me}$ and any $\hat{z}\in A_{\ell'},F_0\in O$. Now Lemma \ref{lem:equivex} gives the result.
\end{proof}

We synthesise all this subsection into a criterion on a unitriangular basic set of $\tg$ to obtain an $O$-stable unitriangular basic set for $G$.
 
\begin{theorem}\label{thm:equiv}
  Assume that $\tg/G$ is a cyclic group and let $O$ be a subgroup of $\Aut(\tg)_G$. Assume that $\tg$ has an $A_{\ell'}\rtimes O$-stable unitriangular basic set such that all its elements are $O$-replaceable. Then $G$ has a unitriangular basic set which can be chosen to be stable under the actions of $O$ and $\tg$.
\end{theorem}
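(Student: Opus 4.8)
The plan is to combine Lemma~\ref{lem:equivfinal} with Theorem~\ref{thm:1} (equivalently, its restatement in terms of $\kappa_G$-replaceability), while keeping track of the action of $O$ and $\tg$ throughout. First, apply Lemma~\ref{lem:equivfinal}: since every element of the $A_{\ell'}\rtimes O$-stable unitriangular basic set $\tme=\tth(\ILG)$ is $O$-replaceable, we obtain a subset $\tme'\subseteq\Irr(\tg)$ and an $A_{\ell'}\rtimes O$-equivariant bijection $\Delta\colon\tme\to\tme'$ such that each $\tc\in\tme$ is $\kappa_G$-replaceable by $\Delta(\tc)$. By the row-exchange observation in Remark~\ref{rmk:exchange}, the composite $\tth'=\Delta\circ\tth\colon\IBr(\tg)\to\Irr(\tg)$ is injective and makes the decomposition matrix of $\tg$ unitriangular with respect to the same order $\leq$ on $\IBr(\tg)$; its image is $\tme'$. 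Moreover $\tth'$ is $A_{\ell'}\rtimes O$-equivariant: on $\IBr(\tg)$ the group $A_{\ell'}$ acts trivially (as noted in~\S\ref{sec:2.2}), $O$ acts by~\eqref{eq:actionofaut}, and $\tth'=\Delta\circ\tth$ is a composite of $A_{\ell'}\rtimes O$-equivariant maps once we recall that $\tth$ itself is equivariant because $\tme$ is stable (Lemma~\ref{lem:equivar}, applied with $K=A_{\ell'}\rtimes O$ and using that $d^1$ is $K$-equivariant). In particular $\tme'$ is $A_{\ell'}\rtimes O$-stable.

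Next I would verify that the new basic set $\tme'$ satisfies hypothesis~(H) of Theorem~\ref{thm:1}: for every $\te\in\ILG$ we need $\kappa_G(\te)=\kappa_G(\tth'(\te))$. But $\tth'(\te)=\Delta(\tth(\te))$ and, by the $\kappa_G$-replaceability in Lemma~\ref{lem:equivfinal}, $\kappa_G(\Delta(\tth(\te)))=\kappa_G(\tth^{-1}(\tth(\te)))=\kappa_G(\te)$, using that $\tth^{-1}(\tc)$ for $\tc=\tth(\te)$ is exactly the maximal element $\te$ and that $\kappa_G$-replaceability was defined precisely to force this equality. Hence~(H) holds for the pair $(\leq,\tth')$. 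Also, since $\Delta$ is $A_{\ell'}\rtimes O$-equivariant and $A_{\ell'}$ acts trivially on $\IBr(\tg)$, the map $\tth'$ is $A_{\ell'}$-equivariant, so the remark following Theorem~\ref{thm:1} applies and the resulting basic set for $G$ will be of the explicit form $\cup_{\tc\in\tme'}\Irr(G\mid\tc)$.

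Now invoke Theorem~\ref{thm:1} with $\tth'$ in place of $\tth$: we obtain an order $\leq_G$ on $\IBr(G)$ and a $\tg$-equivariant injective map $\th\colon\IBr(G)\to\Irr(G)$ making the decomposition matrix of $G$ unitriangular, with image $\me:=\cup_{\te\in\tilde R}\Irr(G\mid\tth'(\te))=\cup_{\tc\in\tme'}\Irr(G\mid\tc)$. It remains to check that $\me$ is $O$-stable (it is $\tg$-stable by construction, being a union of full $\tg$-orbits $\Irr(G\mid\tc)$). This is where the $O$-equivariance of $\Delta$ pays off: for $F_0\in O$ and $\tc\in\tme'$ one has $F_0(\Irr(G\mid\tc))=\Irr(G\mid F_0(\tc))$, and $F_0(\tc)\in\tme'$ since $\tme'$ is $O$-stable; hence $F_0(\me)=\me$. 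Finally, by Lemma~\ref{lem:equivar} applied to the $G\rtimes O$-action (the decomposition map $d^1$ for $G$ is $G\rtimes O$-equivariant, and $\th$ is already $\tg$-equivariant hence $G$-equivariant), the $O$-stability of $\me$ upgrades $\th$ to an $O$-equivariant map, so $\me$ is a unitriangular basic set for $G$ stable under $O$ and $\tg$, as claimed.

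The main obstacle is bookkeeping rather than a single hard idea: one must check at each stage that the relevant map ($\tth$, $\Delta$, $\tth'$, $\th$) is equivariant for the correct group, being careful that $A_{\ell'}$ acts trivially on Brauer characters of $\tg$ but not on ordinary ones, so that equivariance of $\Delta$ as a map of sets of \emph{ordinary} characters of $\tg$ is genuinely needed to transport $O$-stability down to $\me\subseteq\Irr(G)$. The conceptual content has already been isolated in Lemmas~\ref{lem:equivex}--\ref{lem:equivfinal} and in Theorem~\ref{thm:1}; the present proof simply wires them together.
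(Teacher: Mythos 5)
Your proposal is correct and follows the same route as the paper: apply Lemma~\ref{lem:equivfinal} to get $\Delta$, form $\tth'=\Delta\circ\tth$, check hypothesis (H), and feed this into Theorem~\ref{thm:1}. The only divergence is the last step: the paper verifies $O$-equivariance of the resulting map $\Theta'$ directly (via the characterisation of $\Theta'(\eta)$ as the unique constituent $\chi$ of $\Res_G\tth'(\hat{z}F_0(\te))$ with $d_{\chi,F_0(\eta)}=1$), whereas you establish $O$-stability of the image $\cup_{\tc\in\tme'}\Irr(G\mid\tc)$ and then invoke Lemma~\ref{lem:equivar}; both are valid, and since the theorem only asserts stability of the basic set, your route is if anything slightly more economical. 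One slip worth correcting: it is $A_{\ell}$, not $A_{\ell'}$, that acts trivially on $\IBr(\tg)$ (see \S\ref{sec:2.2}); this is harmless here because the $A_{\ell'}\rtimes O$-equivariance of $\tth'$ already follows from its being the composite of the equivariant maps $\tth$ (Lemma~\ref{lem:equivar}) and $\Delta$ (Lemma~\ref{lem:equivfinal}).
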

\begin{proof}
Let $(\leq, \tth)$ be the partial order and map making the decomposition matrix of $\tg$ unitriangular and let $\tme=\tth(\IBr(\tg))$ be the corresponding unitriangular basic set. Let $\Delta$ be a map constructed as in Lemma \ref{lem:equivfinal}. The map $\tth'=\Delta\circ \tth$ then satisfies the hypothesis of Theorem \ref{thm:1}. Let $\Theta'$ be a $\tg$-equivariant map obtained as a result of the proof of Theorem \ref{thm:1} whose notation we adapt for the rest of this proof. It remains only to prove that $\Theta'$ is $O$-equivariant, as the action of $\tg$ commutes with the decomposition map of $G$ (see Lemma \ref{lem:equivar}). Let $\eta\in\IBr(G)$, $\te\in \tilde{R}$ and $F_0\in O$. The character $F_0(\te)$ may not be in the set $\tilde{R}$ but there exists $\hat{z}\in A_{\ell'}$ such that $\hat{z}F_0(\te)\in\tilde{R}$. Then by construction (see the proof of Theorem \ref{thm:1}) we have  $\th'(\eta)\in\Irr(G\mid\tth'(\te))$  and $d_{\th'(\eta),\eta}=1$.
This implies that $F_0(\th'(\eta))\in\Irr(G\mid F_0(\tth'(\te)))$ and $d_{F_0(\th'(\eta)),F_0(\eta)}=1$.
By equivariance of $\tth'$ we have 
\begin{displaymath}
\Irr(G\mid F_0(\tth'(\te)))=\Irr(G\mid \hat{z}F_0(\tth'(\te)))=\Irr(G\mid \tth'(\hat{z}F_0(\te))).  
\end{displaymath}
Finally the fact that $\Theta'(F_0(\eta))$ is the unique element $\chi$ of $\Irr(G)$ such that $\chi\in\Irr(G\mid \tth'(\hat{z}F_0(\te)))$ and $d_{\chi, F_0(\eta)}=1$
implies that $F_0(\Theta'(\eta))=\Theta'(F_0(\eta))$, and concludes the proof.
\end{proof}

\section{Reminders of the character theory of the finite general linear and unitary groups}

In this section we recall some fairly well known material about the character theory of finite general linear and unitary groups (see \cite[pp. 111--112]{FS}, \cite[pp. 149--152]{DM}, \cite[Chapter 8]{CabEn} or \cite[\S 3]{Br}). We do so in a setup that will ease the proof of Theorem \ref{thm:A} and the notation introduced here will be in force until the end of this paper.

\textbf{Notation}. Let $n\geq 2$ be an integer and let $\overline{\F}_q$ be an algebraic closure of a field
$\mathbb{F}_{q}$ with $q$ elements, where $q$ is a power of a prime number $p$. Let $\ell$ be a prime number not dividing $q$.

Let $w_0$ be the permutation matrix in $\GL_n(\ofq)$ corresponding to the element $(1\ n)(2 \ n-1)\dots$ of the symmetric group on $n$ elements, the latter being denoted by $\mathfrak{S}_n$. The Frobenius map $F : \GL_n(\ofq)\rightarrow\GL_n(\ofq)$ defined by $F((a_{i,j})):=(a_{i,j}^q)$ (resp.\ $F((a_{i,j})):=w_0\leftexp{t}{(a_{i,j}^q)^{-1}}w_0$) will be called \emph{untwisted} (resp.\, \emph{twisted}). If $F$ is twisted then the fixed point group $\GL_n(\ofq)^F$ is the finite general unitary group denoted by $\GU_n(q)$ and it may also be denoted by $\GL_n(-q)$, and the fixed point group $\SL_n(\ofq)^{F}$ is the finite special unitary group denoted by $\SU_{n}(q)$. In the untwisted case we use the standard notation $\GL_n(\ofq)^F=\GL_n(q)$ and $\SL_n(\ofq)^F=\SL_n(q)$. 

Moreover we will use standard notation from Deligne-Lusztig theory (see \cite{CabEn}). For instance if $\mathbf{G}$ is a connected reductive algebraic group endowed with a rational structure, and if $\mathbf{L}$ is a rational Levi subgroup of $\mathbf{G}$, then $R_{\mathbf{L}}^{\mathbf{G}}$ will denote the Deligne-Lusztig induction map. If $t$ is a semisimple element in $\GL_n(\ofq)^F$, then $\me(\GL_n(\ofq)^F,t)$ will denote the Lusztig series associated with the $\GL_n(\ofq)^F$-conjugacy class of $t$ (see \cite[\S 8.4]{CabEn}, note that $\GL_n(\ofq)$ is self-dual).

From now on and until the end of this paper, $n$, $q$ and $\ell$ are fixed.

We set $\tbg:=\GL_n(\ofq)$, $\mathbf{G}:=\SL_n(\ofq)$, we let $F$ be the twisted or untwisted Frobenius endomorphism of $\tbg$ and we set $\tg:=\tbg^F$ and $G:=\mathbf{G}^F$. Furthermore we fix $\mathbf{T}$ the $F$-stable maximal torus consisting of diagonal matrices contained in the $F$-stable Borel subgroup of upper triangular matrices of $\tbg$, and we denote by $\mathbf{W}=N_{\tbg}(\mathbf{T})/\mathbf{T}$ the Weyl group of $\tbg$. It is isomorphic to the symmetric group $\mathfrak{S}_n$.

\subsection{Jordan decomposition}
\label{sec:JordanDec}

We let the Frobenius endomorphism $F$ act on $\overline{\F}^{\times}_q=\GL_1(\overline{\F}_q)$ by setting for all $x\in\overline{\F}^{\times}_q$, $F(x):=x^{q}$ (resp.\ $F(x):=x^{-q}$) in the untwisted (resp.\ twisted) case.

For $t\in \tg$ a semisimple element, we denote by:
\begin{itemize}
  \item $\tbg_t$ the centraliser $C_{\tbg}(t)$ of $t$ in $\tbg$, and by $\tg_t$ the $F$-fixed point group $\tbg_t^F$,
  \item $\spec(t)$ the set of eigenvalues of $t$ as an element of $\tbg$,
  \item $m_{\xi}(t)$ the dimension of the $\xi$-eigenspace of $t$ as an element of $\tbg$ for $\xi\in \spec(t)$,
  \item $\ll F\rr.\xi:=\{F^{i}(\xi)\mid i \geq 0\}$ the $F$-orbit of $\xi\in\spec(t)$, and
  \item $\spec(t)/F:=\{\ll F \rr. \xi\mid \xi\in \spec(t)\}$ the set of $F$-orbits in $\spec(t)$.
\end{itemize}

Recall (see for example \cite[p.\ 111]{FS}) that the set of semisimple conjugacy classes in $\tg$ is in bijection with the set of conjugacy classes in $\tbg$ of semisimple elements $t\in\tbg$ satisfying the following conditions:
\begin{itemize}
  \item $\spec(t)$ is $F$-stable, and
  \item the map $\spec(t)\rightarrow \mathbb{Z}_{\geq 0}, \xi \mapsto m_{\xi}(t)$ is constant on $F$-orbits.
\end{itemize}

We make the following definition for dealing efficiently with multipartition indexed by $F$-orbits of eigenvalues.

\begin{definition}\label{def:mft}
  For $t$ a semisimple element of $\tg$ we denote by $\mf_t$ the set of maps $\lambda$ defined from the set $\spec(t)/F$ to the set of all partitions of non-negative integers such that
  \begin{displaymath}
    \text{for all } \fxi \in \spec(t)/F, \quad \lambda(\fxi) \text{ is a partition of the integer } m_{\xi}(t).
  \end{displaymath}
\end{definition}

Let $t$ be a semisimple element in $\tg$. Then it is contained in some $F$-stable maximal torus $\mathbf{T}'$ of $\tbg$ and we let $g\in\tbg$ such that $\leftexp{g}{\mathbf{T}}=\mathbf{T}'$. Let us also set $\mathbf{t}:={t}^g\in\mathbf{T}$. We let 
\begin{displaymath}
\mathbf{W}_{\mathbf{t}}:=\{\sigma\in \mathbf{W} \mid \mathbf{t}^{\sigma}=\mathbf{t}\},  
\end{displaymath}
and set $\mathbf{W}_t:=\leftexp{g}{W_{\mathbf{t}}}$ the \emph{relative Weyl group of $t$}. If $\dot{w}:=g^{-1}F(g)$ and $w$ is the canonical image of $\dot{w}$ in $\mathbf{W}$ then conjugation with $g$ induces an isomorphism of groups $\tbg_{\mathbf{t}}^{wF}\cong \tbg_t^F$ where $\tbg_{\mathbf{t}}:=C_{\tbg}(\mathbf{t})$ and $wF$ is $F$ followed by conjugation with $\dot{w}$. We have 
\begin{equation}\label{eq:centraliser}
{\tbg_{\mathbf{t}}}=\prod_{\xi\in \spec(t)}{\GL_{m_{\xi}(t)}(\overline{\F}_q)},
\end{equation}

and $wF$ acts on $\tbg_{\mathbf{t}}$ in a such a way that
\begin{equation}\label{eq:actiononG}
    wF(\GL_{m_{\xi}(t)}(\overline{\F}_q))=\GL_{m_{F(\xi)}(t)}(\overline{\F}_q).
\end{equation}

We also have
\begin{equation}\label{eq:weylgroup}
    \mathbf{W}_{\mathbf{t}}=\prod_{\xi\in\spec(t)}\mathfrak{S}_{m_{\xi}(t)},
\end{equation}
and $wF$ acts on $\mathbf{W}_{\mathbf{t}}$ in such a way that 
\begin{equation}\label{eq:actiononWG}
wF(\mathfrak{S}_{m_{\xi}(t)})=\mathfrak{S}_{m_{F(\xi)}(t)}.    
\end{equation}

Partitions label irreducible characters of symmetric groups and we will use the parameterisation of \cite[Theorem 2.3.15]{JK} \textbf{twisted by conjugation of partitions}, so that the partition $(1, \dots, 1)$ will label the trivial character of $\mathfrak{S}_n$ and the partition $(n)$ will label the sign character. Then Equation \ref{eq:actiononWG} establishes naturally a bijection $\lambda\mapsto\rho_{\lambda}$ between the sets $\mf_{t}$ and $\Irr(\mathbf{W}_{t})^{F}$. 

By \cite[Theorem 2.2]{LS}, the set   $\me(\tg_{t},1)$ is in bijection with the set $\Irr(\mathbf{W}_{t})^{F}$ and the bijection is as follows. For $\rho\in\Irr(\mathbf{W}_{t})^{F}$, let $\tilde{\rho}$ be one of the two extentions of $\rho$ to the group $ \mathbf{W}_{t}\rtimes \ll F\rr$ that is realisable over the rational field (see \cite[Proposition 3.2]{Lus84}). Then there exists a sign $\epsilon_{\rho}$ such that 
\begin{equation}
    \tvf:=\epsilon_{\rho}\frac{1}{|\mathbf{W}_{t}|}\sum_{\sigma\in \mathbf{W}_{t}}\tilde{\rho}(\sigma.F)R_{{{\mathbf{T}}}_{\sigma}}^{\tbg_{t}}(1)
\end{equation}
is a unipotent character of $\tg_{t}$, where $\mathbf{T}_{{\sigma}}:=\leftexp{gh}{\mathbf{T}}$ is an $F$-stable maximal torus of $\tbg_{t}$ for some $h\in\tbg_{\mathbf{t}}$ with $h^{-1}F(h)$ having image $\leftexp{g^{-1}}{\sigma}$ in $\mathbf{W}_{\mathbf{t}}$.

\begin{definition}\label{def:mos1}
For $\lambda\in\mf_t$ we denote by $\tvf_{\lambda}$ the element in $\me(\tg_t,1)$ such that
\begin{equation}\label{eq:unipotentgt}
    \tvf_{\lambda}=\epsilon_{\rho_{\lambda}}\frac{1}{|\mathbf{W}_{t}|}\sum_{\sigma\in \mathbf{W}_{t}}\tilde{\rho}_{\lambda}(\sigma.F)R_{{{\mathbf{T}}}_{\sigma}}^{\tbg_{t}}(1).
\end{equation}
This assignment $\lambda\mapsto \tvf_{\lambda}$ is thus a bijection between $\mf_t$ and $\me(\tg_t,1)$.

\end{definition}
We will naturally label a unipotent conjugacy class in $\tbg_t$ by multipartitions via the Jordan normal form. Equation \ref{eq:actiononG} establishes naturally a bijection $\lambda\mapsto \mo_{\lambda}$ between the set $\mf_t$ and the set $\Uni(\tbg_t)^F$ of $F$-stable unipotent conjugacy classes of $\tbg_t$. Note also that $\Uni(\tbg_t^F)=\Uni(\tbg_t)^F$.

\begin{rmk}\label{rmk:springercorr}
 The assignement $\rho_{\lambda}\otimes\epsilon\mapsto \mo_{\lambda}$ is the \emph{Springer correspondence} as in \cite{LS85} (see \cite[Proposition 5.2]{LS85}), where $\epsilon$ is the sign character.
\end{rmk}

The Jordan decomposition of characters gives us a bijection between the set $\mf_t$ and the set $\me(\tg,t)$ that we denote by $\lambda \mapsto \chi^{\tg}_{t,\lambda}$. This bijection is explicit in the following way. Recall that there exists an isomorphism between the group $Z(\tg_t)$ and the group of linear characters of $\tg_t$, that we denote by $\hat{}$ (see \cite[Equation 8.19]{CabEn}). Then $\tc^{\tg}_{t,\lambda}=\epsilon_{\tbg}\epsilon_{\tbg_t}R_{\tbg_{t}}^{\tbg}(\hat{t}\tvf_{\lambda})$ (see \cite[Theorem 15.10]{CabEn}) where $\epsilon_{\tbg}$ and $\epsilon_{\tbg_t}$ are well defined signs (see \cite[ p.\ 126]{CabEn})

\subsection{Equivariance of the Jordan decomposition}

%We let $O(\tg)$ be the subgroup of $\Aut(\tg)$ generated by the automorphism $(a_{i,j})\mapsto w_0\leftexp{t}{(a_{i,j}^q)^{-1}}w_0$ and all automorphisms of the form $(a_{i,j})\mapsto (a_{i,j}^{p^f})$ for some integer $f$ and $(a_{i,j})\in\tg$.

%The group $O(\tg)$ acts naturally on the group $\overline{\F}^{\times}_q=\GL_1(\ofq)$. 

\begin{definition}
\label{def:OG}
We let $O(\tg)$ be the subgroup of the group of automorphisms of the abstract group $\tbg$ generated by the elements $F_p:(a_{i,j})\mapsto (a_{i,j}^p)$ and $\gamma_0:(a_{i,j})\mapsto w_0\leftexp{t}{(a_{i,j})^{-1}}w_0$. 
\end{definition}
Note that if $q=p^m$ then we have $F=\gamma_0F_p^m$ (resp.\ $F=F_p^m$) in the twisted (resp.\ non twisted) case. The group $O(\tg)$ acts naturally on the group $\tg$. We let the group $O(\tg)$ act on the group $\ofq^{\times}$ by letting $F_p$ act as $x\mapsto x^p$ and $\gamma_0$ act as $x\mapsto x^{-1}$. Again note that if $q=p^m$ then $F$ and $\gamma_0F_p^m$ (resp.\ $F_p^m$) have the same action on the group $\ofq^{\times}$ in the twisted (resp.\ non twisted) case. We denote by $Z(\tg)$ the center of $\tg$ and we let an element in $Z(\tg)$ act on the group $\ofq^{\times}$ by multiplication by its eigenvalue. We thus have an action of the group $\ZO$ on the group $\ofq^{\times}$.

Let $t\in\tg$ be a semisimple element. For any $zF_0\in\ZO$ and $\fxi\in\spec(t)/F$ we set
\begin{displaymath}
  zF_0(\fxi):=\ll F \rr.zF_0(\xi),
\end{displaymath}
and still call $zF_0$ the induced bijection $\mf_{t}\rightarrow \mf_{zF_0(t)}$, \textit{i.e.}, 
\begin{displaymath}
  zF_0(\lambda)(\fxi):=\lambda(\ll F \rr.(zF_0)^{-1}(\xi))
\end{displaymath}
for any $\fxi\in\spec(zF_0(t))/F$ and $\lambda\in\mf_{t}$.

The proof of \cite[Proposition 1]{Bru09} shows that for any $F_0\in O(\tg)$ and any semisimple element $t\in\tg$ we have:
\begin{equation}\label{eq:bru}
  \widehat{F_0(t)}=\hat{t}\circ F_0^{-1}.
\end{equation}
As the group $Z(\tg)$ is isomorphic via $\hat{}$ to the group of linear characters of $\tg$ that are trivial on $G$ (see \cite[Equation 8.19]{CabEn}), Equation \ref{eq:bru} in the particular case $t\in Z(\tg)$ shows that the group $\ZO$ acts also on the sets $\ILG$ and $\IG$ (see \S \ref{sec:2.2} and Equation \ref{eq:actionofaut}). 

\begin{prop}\label{prop:equivariancechar}
    Let $t$ be a semisimple element in $\tg$, let $\lambda\in\mf_{t}$ and let $zF_0\in\ZO$. Then
    \begin{displaymath}
       \hat{z}F_0(\tc^{\tg}_{t,\lambda})=\tc^{\tg}_{zF_0(t), zF_0(\lambda)}.
    \end{displaymath}
In particular if $zF_0(t)$ is conjugate to $t$, then $\hat{z}F_0(\tc^{\tg}_{t,\lambda})=\tc^{\tg}_{t,\lambda}$ if and only if $\lambda$ is constant on the $zF_0$-orbits of $\spec(t)/F$.
\end{prop}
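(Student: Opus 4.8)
The plan is to reduce everything to the explicit Jordan-decomposition formula $\tc^{\tg}_{t,\lambda}=\epsilon_{\tbg}\epsilon_{\tbg_t}R_{\tbg_{t}}^{\tbg}(\hat{t}\tvf_{\lambda})$ and then chase how each ingredient transforms under $\hat{z}F_0$. First I would observe that applying the automorphism $\hat{z}F_0$ to the Deligne--Lusztig induction commutes with the functor in the sense $\hat{z}F_0(R_{\tbg_{t}}^{\tbg}(\psi))=R_{\tbg_{zF_0(t)}}^{\tbg}(\,{}^{zF_0}\psi\,)$, since $F_0$ (being an abstract-group automorphism of $\tbg$ commuting appropriately with $F$, or rather intertwining the relevant rational structures) carries the Levi $\tbg_t=C_{\tbg}(t)$ to $\tbg_{zF_0(t)}=C_{\tbg}(zF_0(t))$ — note $z$ is central so conjugating by $z$ does nothing to centralisers, but multiplication by $z$ does change $\hat t$; and it carries $R_{\mathbf{T}_\sigma}^{\tbg_t}(1)$ to the corresponding term for $zF_0(t)$ because $F_0$ permutes $F$-stable maximal tori compatibly. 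The signs $\epsilon_{\tbg},\epsilon_{\tbg_t}$ depend only on the $\F_q$-ranks of $\tbg$ and $\tbg_t$, which are automorphism-invariant, so they are unchanged (here one uses that $\tbg_{zF_0(t)}$ and $\tbg_t$ have the same rank).

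Next I would handle the unipotent part. Using Definition \ref{def:mos1}, $\tvf_\lambda$ is built from $\rho_\lambda\in\Irr(\mathbf{W}_t)^F$ and its rational extension $\tilde\rho_\lambda$; the automorphism $F_0$ induces, via conjugation by a suitable element realising $zF_0(t)$, an isomorphism $\mathbf{W}_t\cong\mathbf{W}_{zF_0(t)}$ under which the character $\rho_\lambda$ corresponds to $\rho_{zF_0(\lambda)}$. Indeed by Equation \ref{eq:actiononWG} and the bijection $\lambda\mapsto\rho_\lambda$ between $\mf_t$ and $\Irr(\mathbf{W}_t)^F$, relabelling eigenvalue-blocks by $zF_0$ exactly matches the definition of $zF_0(\lambda)$ given just before Equation \ref{eq:bru}. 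One must check that the rational extension and the sign $\epsilon_{\rho_\lambda}$ are transported correctly: these are intrinsic to the pair $(\mathbf{W}_t\rtimes\ll F\rr,\rho_\lambda)$ up to the isomorphism induced by $F_0$, so they match on the nose. Combining this with the first paragraph and the fact $\widehat{F_0(t)}=\hat t\circ F_0^{-1}$ from Equation \ref{eq:bru} (applied with the central twist by $z$ absorbed into $\hat z$ via $\widehat{zt}=\hat z\hat t$), one gets $\hat z F_0\bigl(R_{\tbg_t}^{\tbg}(\hat t\tvf_\lambda)\bigr)=R_{\tbg_{zF_0(t)}}^{\tbg}\bigl(\widehat{zF_0(t)}\,\tvf_{zF_0(\lambda)}\bigr)$, which is precisely $\tc^{\tg}_{zF_0(t),zF_0(\lambda)}$ up to the (invariant) signs. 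This proves the first assertion.

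For the "in particular" clause, suppose $zF_0(t)$ is $\tg$-conjugate to $t$, say $zF_0(t)=\leftexp{h}{t}$. Then the first part gives $\hat z F_0(\tc^{\tg}_{t,\lambda})=\tc^{\tg}_{zF_0(t),zF_0(\lambda)}=\tc^{\tg}_{t,\mu}$ where $\mu$ is the transport of $zF_0(\lambda)$ under the conjugation-induced identification $\mf_{zF_0(t)}\cong\mf_t$; since the Jordan-decomposition parameterisation $\lambda\mapsto\tc^{\tg}_{t,\lambda}$ is a bijection for fixed $t$, we have $\hat z F_0(\tc^{\tg}_{t,\lambda})=\tc^{\tg}_{t,\lambda}$ if and only if $\mu=\lambda$. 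Finally I would unwind the definition of $\mu$: the composite bijection $\mf_t\to\mf_{zF_0(t)}\to\mf_t$ sends $\lambda$ to the map $\fxi\mapsto\lambda(\ll F\rr.(zF_0)^{-1}(\xi))$ (the conjugation by $h$ inducing the identity on eigenvalue-data since it merely realises the equality $zF_0(t)=\leftexp{h}{t}$), i.e.\ $\mu=\lambda$ exactly when $\lambda$ is constant on the $zF_0$-orbits on $\spec(t)/F$.

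\textbf{Main obstacle.} The delicate point is the compatibility of the abstract-group automorphism $F_0\in O(\tg)$ with Deligne--Lusztig induction: one has to check that $F_0$ (as opposed to an inner automorphism or a genuine Frobenius) does send the pair $(\tbg_t,\tbg)$ with its rational structure to $(\tbg_{zF_0(t)},\tbg)$ in a way that intertwines $R_{\tbg_t}^{\tbg}$, and in particular that it acts correctly on the tori $\mathbf{T}_\sigma$ and on the rational extensions $\tilde\rho_\lambda$; tracking the signs $\epsilon_{\rho_\lambda}$, $\epsilon_{\tbg}$, $\epsilon_{\tbg_t}$ through this identification, and verifying they are genuinely invariant rather than merely well-defined, is where the bookkeeping is most error-prone. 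This is presumably where one invokes \cite{Bru09} (and the fact that $O(\tg)$ consists of the field automorphism $F_p$ and the graph/duality automorphism $\gamma_0$, both of which are compatible with the constructions of Deligne--Lusztig theory).
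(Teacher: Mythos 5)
Your proposal is correct and follows essentially the same route as the paper's proof: apply $\hat zF_0$ to $\epsilon_{\tbg}\epsilon_{\tbg_t}R_{\tbg_t}^{\tbg}(\hat t\tvf_\lambda)$, commute $F_0$ and the central twist through Deligne--Lusztig induction, use $\widehat{F_0(t)}=\hat t\circ F_0^{-1}$, identify $F_0(\tvf_\lambda)$ with $\tvf_{zF_0(\lambda)}$ via the induced isomorphism $\mathbf{W}_t\cong\mathbf{W}_{zF_0(t)}$ and the rational extension $\tilde\rho_\lambda$, and settle the conjugate case by transporting back to $\mf_t$. The paper simply carries out in detail the computation you defer (the explicit sum over $\mathbf{W}_t$ showing $F_0(\tilde\rho_\lambda)$ is the rational extension of $F_0(\rho_\lambda)$), using an element $\tau\in\mathbf{W}$ where you use conjugation by $h$; the arguments are equivalent.
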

\begin{proof}
We compute $\hat{z}F_0(\tc)$ with $\tc:=\tc^{\tg}_{t, \lambda}=\epsilon_{\tbg}\epsilon_{\tbg_{t}}(R_{\tbg_{t}}^{\tbg}(\widehat{t}\tvf_{\lambda}))$ (see the end of \S\ref{sec:JordanDec}):
    \begin{displaymath}
    \begin{aligned}
        \hat{z}F_0(\tc) & = \epsilon_{\tbg}\epsilon_{\tbg_{t}}\hat{z}F_0(R_{\tbg_{t}}^{\tbg}(\widehat{t}\tvf_{\lambda})) \\
        & = \epsilon_{\tbg}\epsilon_{\tbg_{t}}\hat{z}R_{\tbg_{F_0(t)}}^{\tbg}(F_0(\widehat{t})F_0(\tvf_{\lambda})) & (\text{by  \cite[Proposition 13.22]{DM}})\\
        & = \epsilon_{\tbg}\epsilon_{\tbg_{t}}\hat{z}R_{\tbg_{F_0(t)}}^{\tbg}(\widehat{F_0(t)}F_0(\tvf_{\lambda})) & (\text{by Equation \ref{eq:bru}})\\
        & = \epsilon_{\tbg}\epsilon_{\tbg_{t}}R_{\tbg_{F_0(t)}}^{\tbg}(\hat{z}\widehat{F_0(t)}F_0(\tvf_{\lambda})) & (\text{by \cite[Equation 8.20]{CabEn}}).\\
    \end{aligned}
\end{displaymath}

We now need to identify the map in $\mf_{zF_0(t)}$ associated with $F_0(\tvf_{\lambda})$ (see Definition \ref{def:mos1}). We first identify the element in $\Irr(W_{zF_0(t)})^F$ associated with $F_0(\tvf_{\lambda})$. Note that $F_0$ induces a bijection $\mathbf{W}_t\rightarrow \mathbf{W}_{zF_0(t)}$. We now compute starting from Equation \ref{eq:unipotentgt} to which we apply $F_0$:
\begin{displaymath}
    \begin{aligned}
            {F_0}(\tvf_{\lambda}) &= \epsilon_{{\rho_{\lambda}}}\frac{1}{|\mathbf{W}_{t}|}\sum_{\sigma\in \mathbf{W}_{t}}\tilde{\rho}_{\lambda}(\sigma.F)F_0(R_{{\mathbf{T}}_{{\sigma}}}^{\tbg_t}(1))
\\
&=\epsilon_{{\rho_{\lambda}}}\frac{1}{|\mathbf{W}_{t}|}\sum_{\sigma\in \mathbf{W}_{t}}\tilde{\rho}_{\lambda}(\sigma.F)R_{F_0({\mathbf{T}}_{{\sigma}})}^{F_0(\tbg_t)}(1) & (\text{by \cite[Proposition 13.22]{DM}}) \\
&=\epsilon_{{\rho_{\lambda}}}\frac{1}{|\mathbf{W}_{t}|}\sum_{\sigma\in \mathbf{W}_{t}}\tilde{\rho}_{\lambda}(\sigma.F)R_{{\mathbf{T}}_{{F_0(\sigma)}}}^{\tbg_{F_0(t)}}(1) & (\text{because ${F_0}(\mathbf{T})=\mathbf{T}$})\\
&=\epsilon_{{\rho_{\lambda}}}\frac{1}{|\mathbf{W}_{z{F_0(\mathbf{t})}}|}\sum_{\sigma\in \mathbf{W}_{zF_0(t)}}{\tilde{\rho}_{\lambda}}(F_0^{-1}(\sigma).F)R_{{\mathbf{T}}_{{\sigma}}}^{\tbg_{F_0(t)}}(1).
    \end{aligned}
\end{displaymath}
As the character $F_0(\tilde{\rho}_{\lambda})$ defined by $\sigma.F^i \mapsto \tilde{\rho}_{\lambda}(F_0^{-1}(\sigma).F^i)$ is an extension of $F_0(\rho_{\lambda})\in\Irr(\mathbf{W}_{zF_0(t)})^{F}$ to $\mathbf{W}_{zF_0(t)}\rtimes \ll F \rr$  which is realisable over the rational field, this last equation shows that $F_0(\rho_{\lambda})$ is the character corresponding to ${F_0}({\tvf}_{\lambda})\in\me(\tg_{F_0(t)},1)$. We now identify the map in $\mf_{zF_0(t)}$ associated with $F_0(\rho_{\lambda})$. But the isomorphism induced by $F_0$ from 
\begin{displaymath}\mathbf{W}_{t}\cong\prod_{\xi\in\spec(t)}\mathfrak{S}_{m_{\xi}(t)}
\end{displaymath}
 to 
\begin{displaymath}\mathbf{W}_{zF_0(t)}\cong\prod_{\xi\in\spec(zF_0(t))}\mathfrak{S}_{m_{\xi}(zF_0(t))}
\end{displaymath} 
is such that for $\xi\in\spec(t)$,
\begin{displaymath}
    F_0(\mathfrak{S}_{m_{\xi}(t)})=\mathfrak{S}_{m_{zF_0(\xi)}(zF_0(t))},
\end{displaymath}
hence the character $F_0(\rho_{\lambda})$ is associated with the map $zF_0(\lambda)$. If $zF_0(t)$ is conjugate to $t$ then arguments analogous as in the previous computation show that there exists $\tau\in\mathbf{W}$ such that the map $\tau F_0$ induces an automorphism of the group $\mathbf{W}_{t}$ satisfying $\tau F_0(\mathfrak{S}_{m_{\xi}(t)})=\mathfrak{S}_{m_{zF_0(\xi)}(t)}$ and $\hat{z}F_0(\tc)=\tc^{\tg}_{t,\tau zF_0(\lambda)}$, hence the result.
\end{proof}

\subsection{Generalised Gelfand Graev characters and unipotent support}
\label{sec:GGGR}
 We now briefly recall the basic facts from Kawanaka's theory of Generalised Gelfand Graev representations and its link with Lusztig's unipotent support. This theory is what will allow us to extend the methods developed in \cite{KT} to the unitary case.

\subsubsection*{Kawanaka's theorem}
\label{ref:propofsupport}
Let $t\in\tg$ be a semisimple element.  
\begin{definition}\label{def:leqt}
  Let $\lambda,\mu\in\mf_t$. We say that $\mu \leq_t \lambda$ if 
  \begin{displaymath}
    \text{for all }\fxi\in\spec(t)/F, \quad \mu(\fxi)\triangleleft \lambda(\fxi),
  \end{displaymath}
where $\triangleleft$ denotes the dominance order on partitions (see for instance \S $2.6$ in \cite{Geck03}).
\end{definition}

If $\mathcal{O}\in \Uni(\tbg_t)^F$, we denote by $\overline{\mathcal{O}}$ its Zariski closure in the algebraic group $\tbg_t$. Then the partial order on $\Uni(\tbg_t)^F$ given by 
\begin{displaymath}
\mathcal{O}'\leq \mathcal{O} \Leftrightarrow \mathcal{O}' \subset \overline{\mathcal{O}}
\end{displaymath}
corresponds the order defined in Definition \ref{def:leqt} under the natural bijection $\mf_t\leftrightarrow\Uni(\tbg_t)^F$ (see above Remark \ref{rmk:springercorr}). 

Kawanaka's methods allows us to associate to each element $\lambda\in\mf_t$ a character $\tilde{\Gamma}_{\lambda}$ of $\tg_t$ induced from a certain linear character of a unipotent subgroup of $\tg_t$, called a Generalised Gelfand Graev Character, also refered to as a Generalized Gelfand Graev Representation or GGGR for short. We have the following theorem whose proof is contained in \cite[Theorem 3.2.11, Corollary 3.2.18 and Remark 3.2.24(i)]{Kaw85} (see also \cite[\S 4]{Geck} 
and \cite[Corollary 13.6 and Lemma 6.3]{Tay14}):
\begin{theorem}\label{thm:wfs}
Let $\tc\in \Irr(\tg_t)$. There exists a unique element $\mos(\tc)\in\mf_t$ satisfying:
\begin{itemize}
  \item $\langle \tc,\tilde{\Gamma}_{\mos(\tc)} \rangle = 1$, and
  \item for any $\mu\in\mf_t$ such that $\langle \tc, \tilde{\Gamma}_{\mu}\rangle \neq 0$, one has $\mu\leq_t \mos(\tc)$.
\end{itemize}
Restricted to the subset $\me(\tg_t,1)\subset\Irr(\tg_t)$ of unipotent characters, the map $\tc \mapsto\mos(\tc)$ induces a bijection with the set $\mf_t$.
\end{theorem}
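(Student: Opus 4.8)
\textbf{Proof plan for Theorem \ref{thm:wfs}.}

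The plan is to leverage the known structure of GGGRs for the group $\tg_t$, which is (up to the twisting by $wF$) a direct product of general linear groups over $\ofq$ as in Equation \ref{eq:centraliser}. First I would reduce the statement to the case of a single twisted general linear factor: since $\tilde\Gamma_\lambda$ is built from a product of unipotent subgroups, one component per $F$-orbit of eigenvalues, and characters of a direct product factor accordingly, the existence and uniqueness of $\mos(\tc)$ for $\tg_t$ follows from the analogous statement applied componentwise, i.e.\ to each factor $\GL_{m_\xi(t)}(\ofq)^{wF}$, which after re-identification is some $\GL_m(\pm q)$. So the heart of the matter is the case $t=1$, $\tg_t=\tg$.

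Next, for a single group $\GL_n(q)$ or $\GU_n(q)$, I would invoke Kawanaka's results as cited: \cite[Theorem 3.2.11, Corollary 3.2.18 and Remark 3.2.24(i)]{Kaw85}, and the reformulations in \cite[\S 4]{Geck} and \cite[Corollary 13.6 and Lemma 6.3]{Tay14}. The key input is Lusztig's theory of unipotent support: to each $\tc\in\Irr(\tg_t)$ one attaches a unipotent class $\mathcal{O}_\tc$ (the \emph{wave front set}, resp.\ unipotent support), and the GGGR $\tilde\Gamma_\lambda$ attached to the class labelled by $\lambda$ satisfies $\langle\tc,\tilde\Gamma_\lambda\rangle\neq 0$ only for classes $\lambda$ with $\overline{\mathcal{O}_\lambda}\supseteq\mathcal{O}_\tc$, with multiplicity exactly $1$ when $\lambda$ corresponds to $\mathcal{O}_\tc$ itself. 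Under the dictionary of Remark \ref{rmk:springercorr} between $\mf_t$ and $\Uni(\tbg_t)^F$, and the order translation recorded just above that remark (the closure order corresponds to $\leq_t$ of Definition \ref{def:leqt}), this is precisely the assertion that $\mos(\tc)$ exists, is unique, is characterised by the two displayed bullet points, and is $\leq_t$-maximal among the $\mu$ with $\langle\tc,\tilde\Gamma_\mu\rangle\neq 0$. The multiplicity-one statement $\langle\tc,\tilde\Gamma_{\mos(\tc)}\rangle=1$ is exactly Kawanaka's result on the relation between GGGRs and the wave front set (made unconditional for type $A$ by Taylor's work, since the relevant almost-characters and the cleanness of cuspidal local systems are known in this case).

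For the last sentence — that $\tc\mapsto\mos(\tc)$ restricts to a bijection $\me(\tg_t,1)\to\mf_t$ — I would argue as follows. By Definition \ref{def:mos1} the unipotent characters $\me(\tg_t,1)$ are themselves parametrised by $\mf_t$ via $\lambda\mapsto\tvf_\lambda$, so both sides have the same (finite) cardinality and it suffices to prove surjectivity, or equivalently injectivity. Surjectivity: for $\lambda\in\mf_t$, by Definition \ref{def:mos1} and Remark \ref{rmk:springercorr} the class $\mo_\lambda$ is the unipotent support of $\tvf_\lambda$ in Lusztig's sense (this is the content of the Springer-correspondence statement in Remark \ref{rmk:springercorr} combined with the fact that, for unipotent characters, the unipotent support is computed from the Springer correspondence via the $j$-induction/truncation procedure — in type $A$ this simplifies because every unipotent character in a Harish-Chandra series is the Alvis--Curtis dual of a principal-series character), and Kawanaka's theorem then gives $\mos(\tvf_\lambda)=\lambda$. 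Injectivity is then immediate from surjectivity and equicardinality, but one may also see it directly: $\mos$ is injective on $\me(\tg_t,1)$ because distinct unipotent characters have distinct unipotent supports in type $A$ (the map $\lambda\mapsto\mo_\lambda$ being a bijection).

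The main obstacle is purely one of bookkeeping rather than of mathematical substance: one must be careful that the labelling conventions match up. Specifically, Definition \ref{def:mos1} uses the parametrisation of $\Irr(\mathfrak{S}_n)$ \emph{twisted by conjugation of partitions}, and Remark \ref{rmk:springercorr} records that the Springer correspondence is $\rho_\lambda\otimes\epsilon\mapsto\mo_\lambda$; one has to verify that with these conventions the unipotent character $\tvf_\lambda$ really has unipotent support $\mo_\lambda$ (and not $\mo_{\lambda'}$ for the conjugate multipartition), and that the dominance order $\triangleleft$ used in Definition \ref{def:leqt} is the one — rather than its opposite — that corresponds to the closure order on unipotent classes under these identifications. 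Once the conventions are pinned down, the theorem is a direct citation; I expect the write-up to consist mainly of stating the reduction to a single factor and quoting \cite{Kaw85}, \cite{Geck} and \cite{Tay14} with the above identifications in force.
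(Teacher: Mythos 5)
Your proposal matches the paper's treatment: the paper states Theorem \ref{thm:wfs} without proof, citing exactly the references you invoke (\cite[Theorem 3.2.11, Corollary 3.2.18 and Remark 3.2.24(i)]{Kaw85}, \cite[\S 4]{Geck} and \cite[Corollary 13.6 and Lemma 6.3]{Tay14}), so your reduction to the individual $\GL$-factors of $\tbg_t$ and the Springer-correspondence bookkeeping is a faithful unpacking of what those citations contain. One small slip to fix: the containment in your description of Kawanaka's vanishing result should read $\mathcal{O}_\mu\subseteq\overline{\mathcal{O}_{\tc}}$, i.e.\ nonvanishing of $\langle\tc,\tilde{\Gamma}_\mu\rangle$ forces $\mu$ to lie \emph{below} the wave front set in the closure order (so that $\mos(\tc)$ is $\leq_t$-maximal, as you correctly assert two lines later), not the reverse containment you wrote.
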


\begin{rmk}
Let $\mathcal{O}({\tilde{\chi}})$ denote Lusztig's unipotent support of $\tilde{\chi}\in\IG$ (see \cite{Lus92}), and we identify $\mos(\tc)$ with its associated element in $\Uni(\tbg)^F$. Let $\tilde{\chi}^*\in \Irr(\tilde{G})$ denote the Alvis-Curtis dual of $\tilde{\chi}$ up to a sign (see \cite[\S8.15]{DM}). Then $\mathcal{O}^*({\tilde{\chi}})=\mathcal{O}({\tilde{\chi}^*})$ (see \cite[Theorem 14.10]{Tay14}). 
\end{rmk}

\subsubsection*{Computation of $\mos(\tilde{\chi})$}
\label{sec:computation}

Let $t\in\tg$ be a semisimple element, and let $s$ (resp.\ $u$) be the $\ell'$-element (resp.\ $\ell$-element) such that $t=su=us$. The Jordan decomposition of characters for $\tg_s$ induces a bijection between the set $\mf_t$ and the set $\me(\tg_s,u)$ that we denote by $\lambda\mapsto\tc^{\tg_s}_{u,\lambda}$. We have $\tc^{\tg_s}_{u,\lambda}=\epsilon_{\tbg_s}\epsilon_{\tbg_t}R_{\tbg_{t}}^{\tbg_s}(\hat{u}\tvf_{\lambda})$ (see the end of \S\ref{sec:JordanDec}).

\begin{definition}\label{def:mos}
    We denote by $\me^s$ the union of all the Lusztig series $\me(\tg,sv)$ such that $v\in \tg_s$ is an $\ell$-element. We also define $\mos_{s} : \me^s \rightarrow \mathcal{F}_s$, by $\tc^{\tg}_{sv, \lambda} \mapsto \mos(\tc^{\tg_s}_{v, \lambda})$. Note that this map, restricted to the set $\me(\tg, s)$, is bijective.
\end{definition}

\begin{definition}\label{def:sumpartition}
Let $m_1, m_2$ be two positive integers and let $\lambda:=(\lambda_1\geqslant \lambda_2 \geqslant \dots \geqslant \lambda_r)$ be a partition of $m_1$ and $\mu:=(\mu_1 \geqslant \mu_2 \geqslant \dots \geqslant \mu_s )$ be a partition of $m_2$. Up to adding zeros to one of the two partitions we can suppose that $s=r$. Then we define the \emph{sum} $\lambda+\mu$ of the two partitions to be the partition $(\lambda_1+\mu_1 \geqslant \lambda_2+\mu_2 \geqslant \dots \geqslant \lambda_r+\mu_s)$ of $m_1+m_2$, whose parts are the sums of the two corresponding parts of $\lambda$ and $\mu$. This notion extends to finite sums and allows us to define a multiple of $\lambda$ denoted by $d.\lambda$ as $\lambda$ added $d$ times for $d$ a positive integer.
%If $d$ is a positive integer then define $d.\lambda$ to be the partition $(d\lambda_1 \geqslant d\lambda_2 \geqslant \dots \geqslant d\lambda_r)$ of $dm_1$.
\end{definition}

The map $\mathcal{O}^*$ is described in \cite[\S 3]{Kaw85}. However we will follow the more explicit references \cite[\S 13.3]{Lus84} and \cite{Lus92}  (see also \cite[\S 3]{Geck}) to describe $\mos_s(\tc^{\tg}_{t, \delta})$ for $\delta\in\mf_t$. The description goes as follows:
\begin{itemize}
  \item compute $\rho'$ the $j$-induced character $j_{\mathbf{W}_{t}}^{\mathbf{W}_{s}}(\rho_{\delta}\otimes \epsilon)$ (see \cite[pp 76--77]{Lus84}, note that $\rho'$ is an $F$-stable irreducible character of $\mathbf{W}_s$), where $\epsilon$ is the sign character,
  \item via Springer's correspondence (see \cite{LS85}), $\rho'$ corresponds to a unique element in $\Uni(\tg_s)^F$. This class is labelled by an element in $\mathcal{F}_s$, which is $\mos_s(\tc)$.
\end{itemize}

\begin{definition}\label{def:uxi}
For $s$ a semisimple $\ell'$-element in $\tg$, $v\in\tg_s$ an $\ell$-element and $\xi\in\spec(s)$, let $v_{\xi}$ be the endomorphism of the $\xi$-eigenspace of $s$ induced by $v$. Then $\spec(sv)=\{\xi \w \mid \xi\in\spec(s), \w\in\spec(v_{\xi})\}$. For $\xi\in\spec(s)$, let $F_{\xi}:=F^{|\ll F \rr. \xi|}$ where $|\ll F \rr. \xi|$ is the cardinality of the $F$-orbit of $\xi$.
\end{definition}
We will abuse notations by identifying the set $\spec(u_{\xi})/F_{\xi}$ with the set $\spec(u_{F(\xi)})/F_{\xi}$ for $\xi\in\spec(s)$. Then we identify the set $\spec(su)/F$ of $F$-orbits of $\spec(su)$ with the set 
\begin{displaymath}
\{(\fxi , \ll F_{\xi}\rr.\w) \mid \fxi\in \spec(s)/F, \ll F_{\xi}\rr.\w\in\spec(u_{\xi})/F_{\xi}\}  
\end{displaymath}
via the map $\fxi\w \mapsto (\fxi ,\ll F_{\xi}\rr.\w)$. Also note that for $\xi\in\spec(s)$ and $\w\in\spec(u_{\xi})$ we have $$\sum_{\langle F_{\xi}\rangle.\w\in\spec(u_{\xi})/F_{\xi}}|\ll F_{\xi}\rr.\w|m_{\xi\w}(su)=m_{\xi}(s).$$

We now describe the $\mos_s(\tc^{\tg}_{t,\delta})$ combinatorially:
\begin{itemize}
  \item by \cite[3.1(b)]{Lus09}, $\rho'=\rho_{\lambda}\otimes\epsilon$ for $\lambda\in\mf_s$ satisfying for all $\fxi\in\spec(s)/F$,  
  \begin{equation}\label{eq:wfs}    \lambda(\ll F \rr. \xi)=\sum_{\langle F_{\xi} \rangle .\w\in\spec(u_{\xi})/F_{\xi}}{|\ll F_{\xi}\rr .\w|.\delta(\ll F \rr. \xi , \ll F_{\xi}\rr .\w)},
  \end{equation}
  \item and  by  Remark \ref{rmk:springercorr} we have $\lambda=\mos_s(\tc)$.
\end{itemize}

Note that in the case $u=1$ we have $\mos(\tc^{\tg}_{s,\delta})=\delta$.

\begin{rmk} 
Equation \ref{eq:wfs} is analogous to the formula in \cite[p.\ 481]{KT} for passing from a symbol $\mathfrak{s}$ to a symbol $\mathfrak{s}^*$. Note that our formula does not involve conjugating partitions here thanks to our parameterisation of the unipotent characters by multipartitions being the one of \cite{KT} twisted by conjugation. Otherwise the procedure we just described on multipartitions would have given Lusztig's unipotent support of the character $\tc^{\tg_s}_{u, \lambda}$. 
\end{rmk}

\subsection{Stabilisers of characters}

We now find a condition to ensure that the second condition of $O(\tg)$-replaceability is satisfied (see Definition \ref{def:Oexchange}).
\begin{definition}
\label{def:Stabaction}
If $H$ is a subgroup of $\ZO$ and $t\in\tg$ (resp.\ $\tc\in\IG$), then we denote by $H_t$ (resp.\ $H_{\tc}$) the stabiliser of the $\tg$-conjugacy class of $t$ (resp.\ of the character $\tc$) in $H$.
\end{definition}
Let $s\in\tg$ be a semisimple $\ell'$-element.
\begin{lem}\label{cor:stabsuglobal}\label{lem:stabsulocal}
    Let $u\in (\tg_s)_{\ell}$. Suppose that for all $\xi\in\spec(s)$ and all $zF_0\in (\ZlpO)_s$, $F_0(\spec(u_{\xi}))=\spec(u_{zF_0(\xi)})$. Then $(\ZlpO)_{s}=(\ZlpO)_{su}$.
\end{lem}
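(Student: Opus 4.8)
The plan is to reduce the equality of stabilisers $(\ZlpO)_s = (\ZlpO)_{su}$ to a statement about the action on eigenvalues, then invoke the combinatorial description of $su$ in terms of $s$ and $u$. One inclusion is immediate: if $zF_0\in\ZlpO$ fixes the $\tg$-conjugacy class of $su$, then since $s$ is the $\ell'$-part and $u$ the $\ell$-part of $su$, and taking $\ell'$- and $\ell$-parts commutes with the action of $zF_0$ (which is a composition of a field automorphism, the transpose-inverse automorphism, and multiplication by a central $\ell'$-element — all of which respect the Jordan decomposition of a semisimple element into commuting factors of coprime order), $zF_0$ also fixes the $\tg$-class of $s$. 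Hence $(\ZlpO)_{su}\subseteq(\ZlpO)_s$, and this inclusion needs no hypothesis.

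For the reverse inclusion, let $zF_0\in(\ZlpO)_s$. I want to show $zF_0$ fixes the $\tg$-conjugacy class of $su$. By the description of semisimple classes recalled after Equation~\eqref{eq:centraliser} (a semisimple class is determined by the $F$-stable multiset of eigenvalues with multiplicities constant on $F$-orbits), it suffices to check that $zF_0$ permutes $\spec(su)$ while preserving the multiplicity function $\w'\mapsto m_{\w'}(su)$. Recall from Definition~\ref{def:uxi} that $\spec(su)=\{\xi\w \mid \xi\in\spec(s),\ \w\in\spec(v_\xi)\}$ where here $v=u$, and that $m_{\xi}(s)=\sum_{\langle F_\xi\rangle.\w}|\ll F_\xi\rr.\w|\,m_{\xi\w}(su)$. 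The action of $zF_0$ on an eigenvalue $\xi\w$ of $su$ is by $zF_0(\xi\w)=zF_0(\xi)\,F_0(\w)$, since the central element $z$ scales uniformly and $F_0$ is multiplicative on $\ofq^\times$. The hypothesis $F_0(\spec(u_\xi))=\spec(u_{zF_0(\xi)})$ for all $\xi$ is exactly what guarantees that $zF_0$ maps $\spec(su)$ into itself: an eigenvalue $\xi\w$ with $\w\in\spec(u_\xi)$ goes to $zF_0(\xi)F_0(\w)$ with $F_0(\w)\in F_0(\spec(u_\xi))=\spec(u_{zF_0(\xi)})$, which is again of the required form.

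It then remains to verify that this bijection of $\spec(su)$ induced by $zF_0$ preserves multiplicities. Since $zF_0$ already fixes the class of $s$, it preserves the multiplicity function of $s$, and $zF_0$ restricted to the $\xi$-eigenspace of $s$ must carry $u_\xi$ to (a conjugate of) $u_{zF_0(\xi)}$ as an unipotent endomorphism — more precisely, applying $F_0$ to the relation $F_0(\spec(u_\xi))=\spec(u_{zF_0(\xi)})$ at the level of Jordan types shows that the eigenvalue-multiplicity data of $u_\xi$ on each $F_\xi$-orbit is transported to that of $u_{zF_0(\xi)}$, so that $m_{\xi\w}(su)=m_{zF_0(\xi)F_0(\w)}(su)$. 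Combined with the previous paragraph and the characterisation of semisimple classes, this gives that $zF_0$ fixes the $\tg$-class of $su$, i.e. $zF_0\in(\ZlpO)_{su}$. The main obstacle is the bookkeeping in this last step: one must be careful that the hypothesis is stated for the endomorphisms $u_\xi$ rather than directly for multiplicities, so translating "$F_0$ sends the spectrum of $u_\xi$ to that of $u_{zF_0(\xi)}$" into "$F_0$ sends the full Jordan datum of $u_\xi$ to that of $u_{zF_0(\xi)}$" requires using that $F_0$ is induced by an actual automorphism of $\tbg$ intertwining the eigenspace decompositions of $s$ and $zF_0(s)$, so that it conjugates $u$ to an element with the prescribed restrictions — this is where one invokes that $zF_0\in(\ZlpO)_s$ genuinely and not just numerically.
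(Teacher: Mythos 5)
Your argument for the key inclusion $(\ZlpO)_s\subseteq(\ZlpO)_{su}$ is exactly the paper's: write $zF_0(\xi\w)=zF_0(\xi)F_0(\w)$ and use the hypothesis to see that $zF_0$ maps $\spec(su)$ into itself (the reverse inclusion being immediate from the Jordan decomposition, as you say). Up to that point the two proofs coincide; the paper in fact stops there and asserts that preserving the \emph{set} $\spec(su)$ suffices.

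The extra step you attempt --- that $zF_0$ also preserves the multiplicities $m_{\xi\w}(su)$ --- addresses a genuine point (a semisimple class of $\tg$ is determined by the multiset of eigenvalues, not the set), but your justification of it does not work: the hypothesis controls $\spec(u_\xi)$ only as a set, and nothing in it ``transports the Jordan datum'' of $u_\xi$ to that of $u_{zF_0(\xi)}$. Indeed, at this level of generality the multiplicity statement can fail: in the untwisted case with $\ell^a>2$ dividing $q-1$ and $p\equiv\pm 1\pmod{\ell^a}$, take $s=1$ and $u=\diag(\w,\w,\w^{-1})$ with $\w$ of order $\ell^a$; the hypothesis holds for every element of $(\ZlpO)_1=O(\tg)$, yet $\gamma_0(u)\sim\diag(\w^{-1},\w^{-1},\w)$ is not conjugate to $u$. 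So the sentence in your last paragraph claiming that ``the eigenvalue-multiplicity data of $u_\xi$ \dots is transported'' is precisely the unproved (and unprovable from the stated hypothesis) point. The lemma is only ever applied, via Proposition \ref{lemineq}, to the element $u$ of Proposition \ref{prop:lelement1}, for which every eigenvalue of $u_\xi$ occurs with the same multiplicity $m_\xi(s)/\ell^a$ and $m_\xi(s)=m_{zF_0(\xi)}(s)$ because $zF_0$ fixes the class of $s$; there the multiplicity check is immediate. To close your proof one should either strengthen the hypothesis to an equality of spectra \emph{with multiplicities}, or restrict to such $u$ --- note that the paper's own proof skips this verification entirely.
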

\begin{proof}
    We just need to prove that $(\ZlpO)_s\subseteq (\ZlpO)_{su}$. Let $zF_0\in (\ZlpO)_{s}$. It is enough to show that $zF_0(\spec(su))\subset\spec(su)$. Let $\xi\w\in\spec(su)$ with $\xi\in\spec(s)$ and $\w\in\spec(u_{\xi})$. Then $zF_0(\xi\w)=zF_0(\xi)F_0(\w)$. By hypothesis $zF_0(\xi)\in\spec(s)$ and $F_0(\w)\in\spec(u_{zF_0(\xi)})$.  Hence $zF_0(\xi\w)\in\spec(su)$.
\end{proof}

We are now able to prove the following proposition and corollary, which will be essential in proving Theorem \ref{thm:A}.
\begin{prop}\label{prop:stabtclocal}\label{cor:stabtcglobal}
Let us keep the hypothesis and setup of Lemma \ref{lem:stabsulocal}. Let $\tc=\tc^{\tg}_{s, \lambda}$ and $\tc'=\tc^{\tg}_{su,\delta}$ for $\lambda\in\mf_s$ and $\delta\in\mf_{su}$. Suppose that:
    \begin{enumerate}
      \item $\mos_s(\tc)=\mos_s(\tc')$ (see Definition \ref{def:mos}),
      \item for all $\ll F \rr. \xi\in\spec(s)/F$ the map $\delta(\ll F \rr. \xi, ?)$ is constant on $\spec(u_{\xi})/F_{\xi}$ (see Definition \ref{def:uxi}) and we still denote by $\delta$ the induced map on $\spec(s)/F$, and
      \item for all $\xi\in\spec(s)$, all the orbits of $\spec(u_{\xi})$ under the action of $F_{\xi}$ have the same cardinality.
    \end{enumerate}
Then $(\ZlpO)_{\tc}=(\ZlpO)_{\tc'}$.
\end{prop}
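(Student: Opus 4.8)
The plan is to show the two stabilizers agree by showing $(\ZlpO)_s \subseteq (\ZlpO)_{\tc}$ and $(\ZlpO)_s \subseteq (\ZlpO)_{\tc'}$, and then combine these with Lemma \ref{lem:stabsulocal}, which already gives $(\ZlpO)_s = (\ZlpO)_{su}$. The inclusion $(\ZlpO)_s \supseteq (\ZlpO)_{\tc}$ is automatic since $\tc \in \me(\tg, s)$ determines the $\tg$-class of $s$, and similarly $(\ZlpO)_{su} \supseteq (\ZlpO)_{\tc'}$ since $\tc' \in \me(\tg, su)$; so once I prove the forward inclusions, the chain $(\ZlpO)_{\tc} \subseteq (\ZlpO)_s = (\ZlpO)_{su} \subseteq$ (and the reverse via $\tc'$) forces all four groups to coincide. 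So the real content is: every $zF_0 \in (\ZlpO)_s$ fixes both $\tc$ and $\tc'$.

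For $\tc = \tc^{\tg}_{s,\lambda}$: by Proposition \ref{prop:equivariancechar}, since $zF_0(s)$ is conjugate to $s$, we have $\hat{z}F_0(\tc) = \tc$ if and only if $\lambda$ is constant on the $zF_0$-orbits of $\spec(s)/F$. Hypothesis (1) says $\mos_s(\tc) = \mos_s(\tc') =: \mu \in \mf_s$, and hypotheses (2) and (3) let me evaluate the right-hand side of Equation \ref{eq:wfs}: with $\delta(\ll F \rr.\xi, ?)$ constant equal to (the induced) $\delta(\ll F \rr.\xi)$ and all $F_\xi$-orbits on $\spec(u_\xi)$ of a common cardinality $d_\xi$, Equation \ref{eq:wfs} collapses to $\mu(\ll F \rr.\xi) = |\spec(u_\xi)/F_\xi| \cdot d_\xi \cdot \delta(\ll F \rr.\xi) = m_\xi(s) \cdot \delta(\ll F \rr.\xi)/\text{(something)}$ — more precisely, $\mu(\ll F \rr.\xi)$ is a fixed positive-integer multiple of $\delta(\ll F \rr.\xi)$ depending only on $\xi$ through quantities ($m_\xi(s)$, the orbit count, $d_\xi$) that are constant on $zF_0$-orbits of $\spec(s)/F$ by the hypothesis of Lemma \ref{lem:stabsulocal} (which forces $F_0(\spec(u_\xi)) = \spec(u_{zF_0(\xi)})$, hence $m_\xi(s) = m_{zF_0(\xi)}(s)$, the orbit structures match, etc.). Since $\tc = \tc^{\tg}_{s,\mu}$ up to the bijection $\mos_s$ on $\me(\tg,s)$ — wait: actually $\tc \in \me(\tg,s)$ and $\mos_s$ restricted to $\me(\tg,s)$ is bijective onto $\mf_s$, so $\tc$ is determined by $\mu = \mos_s(\tc)$, and saying $\lambda$ is constant on $zF_0$-orbits is equivalent to $\mu$ being constant on $zF_0$-orbits (the passage $\delta \leftrightarrow \lambda$ for the $u=1$ case is the identity, so in fact $\lambda = \mos_s(\tc) = \mu$). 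Thus it suffices that $\mu$ is constant on $zF_0$-orbits of $\spec(s)/F$.

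It therefore remains to establish: (a) $\mu$ is constant on the $zF_0$-orbits of $\spec(s)/F$, and (b) the analogous statement that lets Proposition \ref{prop:equivariancechar} kill $\tc'$, namely that $\delta \in \mf_{su}$ (the multipartition of $su$, not its induced version on $\spec(s)/F$) is constant on the $zF_0$-orbits of $\spec(su)/F$. For (a): since $\mu(\ll F \rr.\xi)$ is a constant-on-$zF_0$-orbits multiple of $\delta(\ll F \rr.\xi)$ — here I mean the induced $\delta$ on $\spec(s)/F$ from hypothesis (2) — I need $\delta$ (induced) to be constant on $zF_0$-orbits of $\spec(s)/F$, but this is not a hypothesis; so actually I must go the other way: since $zF_0 \in (\ZlpO)_{su}$ (by Lemma \ref{lem:stabsulocal}), and... hmm, the clean route is: use Lemma \ref{lem:stabsulocal} to get $zF_0 \in (\ZlpO)_{su}$, then the compatibility of $F_0$ with the eigenspace decompositions (hypothesis of Lemma \ref{lem:stabsulocal}) together with hypotheses (2),(3) shows that the induced bijection $zF_0 : \mf_{su} \to \mf_{su}$ respects the "constant on $\spec(u_\xi)/F_\xi$" condition and acts on the induced data on $\spec(s)/F$ compatibly; then invoke Proposition \ref{prop:equivariancechar} for $su$ to conclude $\hat z F_0(\tc') = \tc'$ once I check $\delta$ is $zF_0$-invariant, and separately track through Equation \ref{eq:wfs} that $zF_0$-invariance of $\delta$ forces $zF_0$-invariance of $\mu = \lambda$, giving $\hat z F_0(\tc) = \tc$ via Proposition \ref{prop:equivariancechar} for $s$.

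\textbf{The main obstacle} will be bookkeeping the three layers of indexing sets — $\spec(s)/F$, the fibered description $\{(\ll F \rr.\xi, \ll F_\xi \rr.\w)\}$ of $\spec(su)/F$, and the $zF_0$-action on each — and verifying that the common-cardinality hypothesis (3) plus the eigenspace-compatibility of Lemma \ref{lem:stabsulocal} make Equation \ref{eq:wfs} into a genuinely $zF_0$-equivariant relation, so that $zF_0$-invariance of $\delta$ is equivalent to $zF_0$-invariance of $\mos_s(\tc) = \mos_s(\tc')$, which is forced by hypothesis (1) being a $zF_0$-invariant equation. Once that equivariance is in hand, Proposition \ref{prop:equivariancechar} applied twice (to the pairs $(s,\lambda)$ and $(su,\delta)$) closes the argument.
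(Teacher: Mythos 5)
Your central computation is the right one --- combining hypothesis (1) with Equation \ref{eq:wfs} and hypothesis (2) to get $\lambda(\fxi)=|\spec(u_{\xi})|\,\delta(\fxi)$, and observing that $|\spec(u_{\xi})|$ is constant on $zF_0$-orbits by the hypothesis of Lemma \ref{lem:stabsulocal} --- and this is exactly the engine of the paper's own proof. But the logical frame you put around it is unsound. You set out to prove $(\ZlpO)_s\subseteq(\ZlpO)_{\tc}$ and $(\ZlpO)_s\subseteq(\ZlpO)_{\tc'}$, i.e.\ that \emph{every} $zF_0$ stabilising the class of $s$ fixes both characters. That is false in general: by Proposition \ref{prop:equivariancechar}, $zF_0\in(\ZlpO)_s$ fixes $\tc$ only when $\lambda$ is constant on the $zF_0$-orbits of $\spec(s)/F$, and nothing in hypotheses (1)--(3) forces this (take $u=1$, so that $\delta=\lambda$ and hypotheses (2) and (3) are vacuous, with $\lambda$ assigning different partitions to two $F$-orbits of eigenvalues of equal multiplicity that are swapped by some $zF_0\in(\ZlpO)_s$). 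You sense the problem when you note that your steps (a) and (b) ``are not a hypothesis,'' but the escape you offer --- that $zF_0$-invariance of $\mos_s(\tc)$ ``is forced by hypothesis (1) being a $zF_0$-invariant equation'' --- is a non sequitur: an equality between two elements of $\mf_s$ says nothing about either element being fixed by $zF_0$.

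The correct architecture, which your own computation already delivers, is a biconditional for each individual $zF_0$, not a pair of inclusions from $(\ZlpO)_s$. Both stabilisers sit inside $(\ZlpO)_s=(\ZlpO)_{su}$ (the containments you correctly call automatic, plus Lemma \ref{lem:stabsulocal}), and for $zF_0$ in that common group Proposition \ref{prop:equivariancechar} characterises membership: $zF_0\in(\ZlpO)_{\tc}$ iff $\lambda$ is constant on the $zF_0$-orbits of $\spec(s)/F$, and $zF_0\in(\ZlpO)_{\tc'}$ iff $\delta$ is constant on the $zF_0$-orbits of $\spec(su)/F$, which by hypothesis (2) reduces to constancy of the induced $\delta$ on $\spec(s)/F$. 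The relation $\lambda(\fxi)=|\spec(u_{\xi})|\,\delta(\fxi)$, with the multiplier constant on $zF_0$-orbits and multiplication of a partition by a fixed positive integer being injective, makes these two conditions equivalent for each $zF_0$ separately; hence the two stabilisers coincide without either having to be all of $(\ZlpO)_s$. Restructure the argument this way and your computation closes it.
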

\begin{proof}
    By Equation \ref{eq:wfs} and our hypothesis, for all $\ll F \rr. \xi\in\spec(s)/F$, 
    \begin{displaymath}
\lambda(\ll F \rr. \xi)=\mos_s(\tc)(\ll F \rr. \xi)=\mos_s(\tc')(\ll F \rr. \xi)=|\spec(u_{\xi})|\delta(\ll F \rr. \xi).      
    \end{displaymath}
 This equation together with the hypothesis of Lemma \ref{lem:stabsulocal} proves that $\lambda$ is constant on the $zF_0$-orbits of $\spec(s)/F$ if and only if $\delta$ is. Now Lemma \ref{lem:stabsulocal} and Proposition \ref{prop:equivariancechar} imply the result.
\end{proof}

If $d$ is a positive integer
then $d_{\ell}$
 (resp. $d_{\ell'}$) denotes the unique power of $\ell$
 (resp. positive integer coprime to $\ell$) such that $d=d_{\ell}d_{\ell'}$.

\begin{cor}\label{cor:stabtcl}
    Under the hypotheses and setup of Proposition \ref{cor:stabtcglobal}, we have:
    \begin{displaymath}
      \kappa_G(\tc')_{\ell}=|(Z(\tg)_{\ell})_{su}|
    \end{displaymath}
\end{cor}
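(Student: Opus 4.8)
The plan is to route through the stabiliser of $\tc'$ in $Z(\tg)$ and then cut everything down to $\ell$-parts. Recall that $z\mapsto\hat z$ identifies $Z(\tg)$ with the group $A$ of linear characters of $\tg$ trivial on $G$, that the $\ZO$-action restricts on $\IG$ to the action of $A$ by multiplication (see the paragraph preceding Proposition \ref{prop:equivariancechar}), and that the $A$-orbits on $\IG$ are the fibres of $\Res_G$ (see \cite[Lemma 3.7]{KT}). By Clifford theory for the cyclic quotient $\tg/G$, the $A$-orbit of $\tc'$ has $|A|/\kappa_G(\tc')$ elements, so $\kappa_G(\tc')=|A_{\tc'}|=|Z(\tg)_{\tc'}|$, where $Z(\tg)_{\tc'}$ is the stabiliser of $\tc'$ in $Z(\tg)$ in the sense of Definition \ref{def:Stabaction}. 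As $Z(\tg)$ is abelian, $Z(\tg)_{\tc'}$ is the direct product of its $\ell$- and $\ell'$-parts, whence
\begin{displaymath}
  \kappa_G(\tc')_\ell=|(Z(\tg)_{\tc'})_\ell|=|(Z(\tg)_\ell)_{\tc'}|,
\end{displaymath}
and it remains to prove the equality of subgroups $(Z(\tg)_\ell)_{\tc'}=(Z(\tg)_\ell)_{su}$.

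The inclusion $(Z(\tg)_\ell)_{\tc'}\subseteq(Z(\tg)_\ell)_{su}$ is free: if $z\in Z(\tg)_\ell$ fixes $\tc'=\tc^{\tg}_{su,\delta}$, then Proposition \ref{prop:equivariancechar} gives $\tc^{\tg}_{z(su),z(\delta)}=\tc^{\tg}_{su,\delta}$, and comparing Lusztig series forces $z(su)$ to be $\tg$-conjugate to $su$, i.e. $z\in(Z(\tg)_\ell)_{su}$.

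For the reverse inclusion — the one genuine point, and the place where hypothesis (2) of Proposition \ref{cor:stabtcglobal} enters — take $z\in(Z(\tg)_\ell)_{su}$ with eigenvalue $\zeta$, a root of unity of $\ell$-power order. Since $z(su)$ and $su$ are conjugate they have the same spectrum, so multiplication by $\zeta$ permutes $\spec(su)$. Writing an element of $\spec(su)$ as $\xi\w$ with $\xi\in\spec(s)$ its $\ell'$-part and $\w\in\spec(u_\xi)$ its $\ell$-part, multiplication by $\zeta$ sends $\xi\w$ to $\zeta\xi\w$, whose $\ell'$-part is still $\xi$ and whose $\ell$-part is $\zeta\w$. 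Hence, under the identification of $\spec(su)/F$ with $\{(\fxi,\ll F_\xi\rr.\w)\}$ introduced after Definition \ref{def:uxi}, the induced permutation of $\spec(su)/F$ fixes the first coordinate. By hypothesis (2) the map $\delta(\fxi,?)$ is constant on $\spec(u_\xi)/F_\xi$, so $\delta$ factors through the first coordinate and is therefore constant on every $z$-orbit of $\spec(su)/F$. The last assertion of Proposition \ref{prop:equivariancechar}, applied with $t=su$ and $F_0=1$, now yields $\hat z(\tc')=\tc'$, that is $z\in(Z(\tg)_\ell)_{\tc'}$.

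Combining the two inclusions with the first paragraph gives $\kappa_G(\tc')_\ell=|(Z(\tg)_\ell)_{\tc'}|=|(Z(\tg)_\ell)_{su}|$. I expect the reverse inclusion to be the only real obstacle: once one observes that a central $\ell$-element fixes the $\ell'$-part of each eigenvalue of $su$ — so that it acts on $\spec(su)/F$ only through the ``$\spec(u_\xi)$-coordinate'', exactly the coordinate along which hypothesis (2) forces $\delta$ to be constant — the statement drops out of Proposition \ref{prop:equivariancechar} together with the elementary bookkeeping on $\ell$-parts of the abelian group $Z(\tg)$ carried out in the first paragraph.
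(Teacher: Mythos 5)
Your proposal is correct and takes essentially the same route as the paper: Clifford theory reduces the claim to $\kappa_G(\tc')_{\ell}=|(Z(\tg)_{\ell})_{\tc'}|$, the inclusion $(Z(\tg)_{\ell})_{\tc'}\subseteq(Z(\tg)_{\ell})_{su}$ comes from Proposition \ref{prop:equivariancechar}, and the reverse inclusion is exactly the paper's argument that a central $\ell$-element moves only the $\spec(u_{\xi})$-coordinate, on which hypothesis (2) makes $\delta$ constant. The only difference is cosmetic: the paper writes the action explicitly as $\gamma(\ll F\rr.\xi,\ll F_{\xi}\rr.\w)=\delta(\ll F\rr.\xi,\ll F_{\xi}\rr.z^{-1}\w)$, while you invoke the ``constant on $z$-orbits'' criterion from the statement of Proposition \ref{prop:equivariancechar}, which is the same observation.
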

\begin{proof}
By standard Clifford theory we have $\kappa_G(\tc')_{\ell}=|(Z(\tg)_{\ell})_{\tc'}|$. As $(Z(\tg)_{\ell})_{\tc'}\subseteq (Z(\tg)_{\ell})_{su}$ (see Proposition \ref{prop:equivariancechar})  we just need to prove that $(Z(\tg)_{\ell})_{su}\subset (Z(\tg)_{\ell})_{\tc'}$. Let $z\in (Z(\tg)_{\ell})_{su}$. By Proposition \ref{prop:equivariancechar}, $\hat{z}\tc'=\tc^{\tg}_{su,\gamma}$ with $\gamma\in\mf_{su}$ such that for $\fxi\in\spec(s)/F$ and $\ll F_{\xi}\rr.\w\in\spec(u_{\xi})/F_{\xi}$ we have
\begin{displaymath}
\gamma(\ll F \rr.\xi , \ll F_{\xi} \rr. \w)=\delta(\ll F\rr .\xi ,\ll F_{\xi}\rr .z^{-1}\w).
\end{displaymath}
By hypothesis $2$ in Proposition \ref{prop:stabtclocal} we have
\begin{displaymath}
\delta(\ll F\rr .\xi , \ll F_{\xi}\rr .z^{-1}\w)=\delta(\ll F \rr. \xi , \ll F_{\xi} \rr. \w),  
\end{displaymath} 
hence $\delta=\gamma$.
\end{proof}

\section{Unitriangularity of the decomposition matrices of special linear and unitary groups}
In this section we first recall the main result in \cite{Geck} and then show that Theorem \ref{thm:equiv} is applicable. We finish by proving that it implies Theorem \ref{thm:A}.

\subsection{Unitriangularity of the decomposition matrices of general linear and unitary groups}

For any semisimple $\ell'$-element $s$ in $\tg$, the partial order $\leq_s$
(see Definition \ref{def:leqt}) on $\mf_{s}$ induces a partial pre-order
on $\me^s$ via the map $\mos_s$ (see Definition \ref{def:mos}) which induces an actual partial order on the set $\me(\tg,s)$, and also induces an actual partial order on $\me(\tg_s,1)$ (see Definition \ref{def:mos1}). The notation $\leq_s$ will be used to denote
the induced order on any of these sets. Let $\leq$ be the
partial pre-order on $\Irr(\tg)$ such that $\tc\leq\tc'$ if and only if $\tc,\tc'\in \me^s$ and $\tc\leq_s \tc'$ for some semisimple $\ell'$-element $s\in\tg$.

Let $\tme=\cup_{s}\me(\tg,s)$ (union over semisimple $\ell'$ elements). Note that by \cite[Theorem A]{GeckII} we have $\tme_{\ell}=\IBr(\tg)$ (see Definition \ref{def:mel}). Moreover in \cite{Geck} the following is proved:
\begin{theorem}\label{thmMeinolf}
 For any semisimple $\ell'$-element $s\in \tilde{G}$, there exists an
 injection $\Theta_s : (\mathcal{E}^s)_{\ell} \rightarrow \mathcal{E}^s$
 (see Definition \ref{def:mel}) with image $\mathcal{E}(\tilde{G},s)$ such that for $\tilde{\eta},
 \tilde{\mu}\in (\mathcal{E}^s)_{\ell}$, if we define $\tth=\cup_{s}\Theta_s$ and set
\begin{displaymath}
  \tilde{\eta}\leq \tilde{\mu} \Leftrightarrow \Theta(\tilde{\eta}) \leq \Theta(\tilde{\mu}),
\end{displaymath}
then the map $\tth$ has image $\tme$, and together with the partial order $\leq$ make
the decomposition matrix of $\tg$ unitriangular. 
\end{theorem}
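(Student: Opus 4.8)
The plan is to follow the strategy of \cite{Geck}: combine the block theory of $\GL_n(q)$ and $\GU_n(q)$, Lusztig's Jordan decomposition, and the Gelfand--Graev input of Theorem~\ref{thm:wfs}. First I would reduce to a single series. As $s$ runs over representatives of the $\ell'$-semisimple classes of $\tg$, the subsets $\me^s$ partition $\Irr(\tg)$ and each is a union of $\ell$-blocks of $\tg$ (using that $\GL_n$ and $\GU_n$ have connected centre; see \cite{FS}, \cite{CabEn}); hence the decomposition matrix of $\tg$ is block-diagonal along the $\me^s$ and $(\me^s)_\ell=\IBr(\me^s)$ is the set of Brauer characters of those blocks. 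Since $C_{\tbg}(s)$ is a connected $F$-stable Levi subgroup and $\tg_s$ is a direct product of general linear and unitary groups, Lusztig's Jordan decomposition and its compatibility with the decomposition map for these groups (\cite{FS}, \cite{CabEn}) identify the part of the decomposition matrix of $\tg$ supported on $\me^s$ with the part of the decomposition matrix of $\tg_s$ supported on the union of its unipotent $\ell$-blocks $\bigcup_{v}\me(\tg_s,v)$ ($v$ ranging over the $\ell$-elements of $\tg_s$), matching $\chi^{\tg}_{sv,\lambda}\leftrightarrow\tc^{\tg_s}_{v,\lambda}$ and, by Definition~\ref{def:mos}, matching the pre-order $\leq_s$. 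Thus it suffices to treat $\tg_s$ and its unipotent blocks; I keep the notation $\tg$, $\me^s$ for these, so that effectively $s=1$.

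Next I would extract a triangular pairing from GGGRs. For $\lambda\in\mf_s$ the GGGR $\tilde{\Gamma}_\lambda$ of $\tg_s$ is induced from a linear character of a $p$-subgroup, and since $\ell\neq p$ its restriction to $\ell'$-elements is the Brauer character of a projective module; projecting onto the unipotent blocks produces a projective character $\Psi_\lambda=\sum_{\eta\in\IBr(\me^s)}a_{\lambda\eta}\,\Phi_\eta$, where $\Phi_\eta$ is the character of the projective cover of $\eta$ and $a_{\lambda\eta}\in\mathbb{Z}_{\geq0}$, and $\langle\tilde{\Gamma}_\lambda,\tc\rangle=\sum_\eta a_{\lambda\eta}\,d_{\tc,\eta}$ for every $\tc\in\me^s$. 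Theorem~\ref{thm:wfs}, applied inside $\tg_s$ with trivial semisimple part, gives for each unipotent character $\tc^{\tg_s}_{1,\mu}$ that $\mos(\tc^{\tg_s}_{1,\mu})=\mu$ (the $u=1$ case of Definition~\ref{def:mos}), that $\langle\tc^{\tg_s}_{1,\mu},\tilde{\Gamma}_\mu\rangle=1$, and that $\langle\tc^{\tg_s}_{1,\mu},\tilde{\Gamma}_\lambda\rangle\neq0$ implies $\lambda\leq_s\mu$. Hence the square matrix $P=(\langle\tilde{\Gamma}_\lambda,\tc^{\tg_s}_{1,\mu}\rangle)_{\lambda,\mu\in\mf_s}$ satisfies $P_{\lambda\mu}=0$ unless $\lambda\leq_s\mu$ and $P_{\mu\mu}=1$; in particular $P$ is invertible.

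Then I would convert this into unitriangularity of the decomposition matrix. Set $A=(a_{\lambda\eta})$ and let $D=(d_{\tc^{\tg_s}_{1,\mu},\eta})_{\mu\in\mf_s,\,\eta\in\IBr(\me^s)}$ be the submatrix of the decomposition matrix on the rows $\me(\tg,s)$, so $A\,D^{\mathrm t}=P$. As $P$ is invertible, $A$ has full row rank, so $|\mf_s|\leq|\IBr(\me^s)|$; summing over all $s$ (back in $\tg$) and using the Jordan-decomposition count $|\tme|=\sum_s|\Uni(\tg_s)^F|=(\text{number of }\ell\text{-regular classes of }\tg)=|\IBr(\tg)|=\sum_s|\IBr(\me^s)|$, valid since the centre is connected (compare also \cite[Theorem A]{GeckII}, giving $\tme_\ell=\IBr(\tg)$), forces $|\mf_s|=|\IBr(\me^s)|$ for every $s$. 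Now $A$, $D$, $P$ are square, nonnegative and integral with $A\,D^{\mathrm t}=P$ unitriangular, and an elementary argument then produces a bijection $\pi\colon\IBr(\me^s)\to\mf_s$: from $P_{\mu\mu}=1$ one gets, for each $\mu$, a unique $\eta=\pi^{-1}(\mu)$ with $a_{\mu\eta}=d_{\tc^{\tg_s}_{1,\mu},\eta}=1$; and since $a_{\nu,\pi^{-1}(\nu)}=1$, any nonzero $d_{\tc^{\tg_s}_{1,\mu},\pi^{-1}(\nu)}$ contributes to $P_{\nu\mu}$, forcing $\nu\leq_s\mu$. Defining $\Theta_s\colon(\me^s)_\ell\to\me^s$ by $\Theta_s(\eta)=\chi^{\tg}_{s,\pi(\eta)}$ (read back inside $\tg$), it is injective with image $\me(\tg,s)$, and $d_{\Theta_s(\eta),\eta'}$ vanishes unless $\pi(\eta')\leq_s\pi(\eta)$ and equals $1$ when $\eta=\eta'$. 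Taking $\tth=\cup_s\Theta_s$ and transporting $\leq$ along $\tth$ gives exactly the order of the statement, since on $\me(\tg,s)$ the pre-order $\leq$ is the pull-back of $\leq_s$ along $\mos_s$ and $\mos_s(\chi^{\tg}_{s,\mu})=\mu$; this makes the decomposition matrix of $\tg$ unitriangular.

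The main obstacle is not the concluding linear algebra but the two structural inputs it rests on. First, one needs the compatibility of Lusztig's Jordan decomposition with the decomposition map for $\GL_n(q)$ and $\GU_n(q)$, so as to reduce honestly from $\me^s$ to the unipotent blocks of $\tg_s$ while keeping the pre-order $\leq_s$ intact; this lies in the block theory of these groups. Second, one needs the counting step ensuring $|\me(\tg,s)|=|\IBr(\me^s)|$: it is precisely this that upgrades the GGGR pairing — a priori only an identity among nonnegatively weighted sums of decomposition numbers — into a genuine unitriangular pattern for the decomposition matrix itself, hence into the injection $\Theta_s$ with image $\me(\tg,s)$. Theorem~\ref{thm:wfs} (the Kawanaka--Lusztig--Taylor wave-front-set statement) supplies the decisive representation-theoretic content and is taken as known here.
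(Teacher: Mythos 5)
The paper does not prove this theorem: it is quoted verbatim from \cite{Geck} (and the compatibility of the order with $\mos_s$ is exactly the content of Geck's construction). Your reconstruction is correct and follows essentially the same route as the cited source --- and as the paper's own Proposition \ref{lembschange1}, which re-runs the identical computation (projectivity of GGGRs, Kawanaka's Theorem \ref{thm:wfs} giving the unitriangular pairing $(\langle\tvf_{\mu},\tilde{\Gamma}_{\mu'}\rangle)$, the relation $L_1D=L_2$) to establish replaceability. Two minor points. First, the wholesale ``identification of the $\me^s$-part of the decomposition matrix of $\tg$ with the unipotent part of that of $\tg_s$'' is a stronger input than you need (for $\GU_n$ it amounts to Bonnaf\'e--Rouquier); the weaker relation the paper actually invokes, \cite[Proposition 4.11]{Br}, expressing $d^1(\tc^{\tg}_{sv,\lambda})$ as an explicit integer combination of the $d^1(\tc^{\tg}_{s,\mu})$, already suffices for your linear algebra. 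Second, in the final step you should say explicitly why $\mu\mapsto\pi^{-1}(\mu)$ is injective: if $\pi^{-1}(\mu)=\pi^{-1}(\nu)$ then both $P_{\mu\nu}$ and $P_{\nu\mu}$ are nonzero, so $\mu\leq_s\nu$ and $\nu\leq_s\mu$, whence $\mu=\nu$ by antisymmetry of the dominance order; only then does the cardinality count $|\mf_s|=|\IBr(\me^s)|$ turn your assignment into the required bijection.
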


Proposition \ref{prop:equivariancechar} shows that $\tme$ is $\ZlpO$-stable (see Definition \ref{def:OG}) so that, by Lemma \ref{lem:equivar}, the map $\tth$ is $\ZlpO$-equivariant.

The next section will prove that every element of $\tme$ is
$O(\tg)$-replaceable (see Definition \ref{def:Oexchange}), thus allowing us to use Theorem \ref{thm:equiv}. First we show that regarding the $\kappa_G$-replaceability (see Definition \ref{def:exchange}) only the
$\ell$-part of the number of irreducible constituents is relevant. 
\begin{cor}\label{lprimepart}
  Let $s$ be a semisimple $\ell'$-element of $\tilde{G}$. Let
  $\tilde{\chi}\in 
\tme$ and let $\tilde{\eta}\in\ILG$
  be such that $\tth(\tilde{\eta})=\tilde{\chi}$. Let
  $\tc'\in\Irr(\tg)$ and suppose that $\tc$ is replaceable by $\tc'$ and have the
  same stabiliser under the action of the group $Z(\tg)_{\ell'}$. 
Then $\kappa_G(\tilde{\chi}')_{\ell'}= \kappa_{G}(\tilde{\eta})_{\ell'}$.
\end{cor}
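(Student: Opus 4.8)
The plan is to reduce the statement about $\ell'$-parts to the already-proven identity $\kappa_G(\tilde{\eta}) = \kappa_G(\tth(\tilde{\eta}))$ that would follow from the hypothesis (H) of Theorem \ref{thm:1}, combined with the fact that $\tc$ is replaceable by $\tc'$. First I would recall from standard Clifford theory that for any $\tilde{\varphi}$ in $\IG$ (resp.\ in $\ILG$), the number $\kappa_G(\tilde{\varphi})$ equals the index $[A : A_{\tilde\varphi}]$ (resp.\ $[A_{\ell'} : (A_{\ell'})_{\tilde\varphi}]$), since $A$ (resp.\ $A_{\ell'}$) acts on the fibers of $\Res_G$ with $\kappa_G$ giving the orbit size, and $A_\ell$ acts trivially on $\ILG$ (as noted after \cite[Lemma 3.7]{KT} in \S\ref{sec:2.2}). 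Under the isomorphism $\hat{}\,$ of \cite[Equation 8.19]{CabEn}, $A \cong Z(\tg)$, so $\kappa_G(\tilde\varphi)_{\ell'} = [Z(\tg) : Z(\tg)_{\tilde\varphi}]_{\ell'}$, and the point is that this $\ell'$-part only depends on the action of $Z(\tg)_{\ell'}$ (together with possibly $Z(\tg)_\ell$, but that part contributes only to the $\ell$-part). More precisely, $\kappa_G(\tilde\varphi)_{\ell'} = [Z(\tg)_{\ell'} : (Z(\tg)_{\ell'})_{\tilde\varphi}]$, because $Z(\tg) = Z(\tg)_{\ell'} \times Z(\tg)_\ell$ and the orbit of $\tilde\varphi$ under $Z(\tg)$ factors as a product of orbits under the two factors.

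The second step uses the hypothesis that $\tc$ and $\tc'$ have the same stabiliser in $Z(\tg)_{\ell'}$, i.e.\ $(Z(\tg)_{\ell'})_{\tc} = (Z(\tg)_{\ell'})_{\tc'}$, to conclude $\kappa_G(\tc)_{\ell'} = \kappa_G(\tc')_{\ell'}$ directly from the index formula of step one. The third step relates $\kappa_G(\tc)_{\ell'}$ to $\kappa_G(\tilde\eta)_{\ell'}$. Since $\tc$ is replaceable by $\tc'$, by Remark \ref{rmk:exchange} the map $\tth$ can be modified to a map $\tth'$ agreeing with $\tth$ except at $\tilde\eta = \tth^{-1}(\tc)$, where $\tth'(\tilde\eta) = \tc'$, and $(\leq, \tth')$ still makes the decomposition matrix of $\tg$ unitriangular. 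But the proof of Theorem \ref{thm:1} (see also Remark \ref{rmk:multiple}) shows that for any pair $(\leq, \tth')$ making the decomposition matrix unitriangular, one has $\kappa_G(\tilde\eta)$ divisible by $\kappa_G(\tth'(\tilde\eta))$; applying this to both $\tth$ and $\tth'$ gives that $\kappa_G(\tc)$ and $\kappa_G(\tc')$ each divide $\kappa_G(\tilde\eta)$. That alone is not enough, so I would instead note that replaceability gives a sharper fact: in Equation \ref{eq:proof} applied with $\tth'$ in place of $\tth$, the argument still shows that $\tth$ restricts to a $\tg$-equivariant surjection $\IBr(G\mid\tilde\eta) \to \Irr(G\mid\tc')$ whose fibers all have the same size, namely $\kappa_G(\tilde\eta)/\kappa_G(\tc')$, and this common fiber size is exactly $[(A_{\ell'})_{\tilde\eta} : \ldots]$-type quantity whose $\ell'$-part we can pin down. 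Concretely, the fiber size is $|(Z(\tg)_{\ell'})_{\tc'}|/|(Z(\tg)_{\ell'})_{\tilde\eta}|$ up to $\ell$-parts (here using that $A_\ell$ acts trivially on $\ILG$ so $(Z(\tg)_\ell)_{\tilde\eta} = Z(\tg)_\ell$), and combining with the analogous statement for the original $\tth$ (where (H) or at least the surjection holds) forces the $\ell'$-parts to match: $\kappa_G(\tc)_{\ell'}$ divides $\kappa_G(\tilde\eta)_{\ell'}$ with the cofactor being an $\ell'$-number that must equal $1$ because $(Z(\tg)_{\ell'})_{\tc} = (Z(\tg)_{\ell'})_{\tilde\eta}$ follows from Proposition \ref{prop:equivariancechar} applied to $\tc \in \me(\tg,s)$ and the structure of $\me^s$.

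The main obstacle I anticipate is the third step: carefully tracking that the $\ell'$-part of the divisibility $\kappa_G(\tth(\tilde\eta)) \mid \kappa_G(\tilde\eta)$ is in fact an equality, i.e.\ that the cofactor is an $\ell$-number. The cleanest route is probably to first establish, using Proposition \ref{prop:equivariancechar} and the fact that $\tc = \tth(\tilde\eta) \in \me(\tg,s)$ lies in a Lusztig series indexed by a semisimple $\ell'$-element, that $(Z(\tg)_{\ell'})_{\tilde\eta} = (Z(\tg)_{\ell'})_{\tc}$ (this is essentially because $Z(\tg)_{\ell'}$ permutes Lusztig series of $\ell'$-elements freely enough that the stabiliser of $\tilde\eta \in (\me^s)_\ell$ as a Brauer character equals that of $\tc$), whence $\kappa_G(\tilde\eta)_{\ell'} = \kappa_G(\tc)_{\ell'}$ by step one, and then step two closes the argument. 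So the real work is verifying $(Z(\tg)_{\ell'})_{\tilde\eta} = (Z(\tg)_{\ell'})_{\tc}$, which I expect to follow from the $\ZlpO$-equivariance of $\tth$ (stated right after Theorem \ref{thmMeinolf}) together with the fact that $Z(\tg)_\ell$ acts trivially on $\ILG$, so that the $Z(\tg)_{\ell'}$-stabiliser of $\tilde\eta$ and of $\tth(\tilde\eta)$ coincide.
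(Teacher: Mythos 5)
Your final ``cleanest route'' is exactly the paper's proof: $\kappa_G(\tc)_{\ell'}=\kappa_G(\tc')_{\ell'}$ from the equality of $Z(\tg)_{\ell'}$-stabilisers, then $(Z(\tg)_{\ell'})_{\te}=(Z(\tg)_{\ell'})_{\tc}$ from the $\ZlpO$-equivariance (and injectivity) of $\tth$, then the Brauer-character analogue of the Clifford count from \cite[Proposition 3.2(i) and Lemma 3.1]{KT}. The long detour through Remark \ref{rmk:exchange}, divisibility and fibre sizes is unnecessary and can be deleted. One genuine slip to fix: for $\tg/G$ cyclic, Clifford theory gives $\kappa_G(\tvf)=|A_{\tvf}|$, the \emph{order} of the stabiliser of $\tvf$ in $A$, not the index $[A:A_{\tvf}]$ (the index is the size of the $A$-orbit, i.e.\ of the fibre of $\Res_G$ through $\tvf$, which is the complementary quantity). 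This does not damage your argument here, since equal stabilisers yield equal $\kappa$'s either way, but the formula as you state it is false and would give wrong numbers wherever it is used quantitatively (e.g.\ in Corollary \ref{cor:stabtcl} or Proposition \ref{lemeq}).
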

\begin{proof}
By Clifford theory we have $\kappa_G(\tc)_{\ell'}=\kappa_G(\tc')_{\ell'}=|(Z(\tg)_{\ell'})_{\tc}|$. By the equivariance of the map $\tth$ we have $(Z(\tg)_{\ell'})_{\te} = (Z(\tg)_{\ell'})_{\tc}$.

Applying Clifford theory (see \cite[Proposition 3.2(i) and Lemma 3.1]{KT}) we have $|(Z(\tg)_{\ell'})_{\te}|=\kappa_G(\te)_{\ell'}$, which proves the lemma.
\end{proof}

\subsection{Changing the basic set while preserving unitriangularity}

We now prove a criterion regarding the replaceability of an element of $\tme$.

\begin{prop}\label{lembschange1}
  Let $s$ be a semisimple $\ell'$-element in $\tilde{G}$.
  Let $\tilde{\chi}\in\mathcal{E}(\tilde{G}, s)$. Let $u\in\tg_s$ be an
  $\ell$-element and let $\tilde{\chi}'\in
  \mathcal{E}(\tilde{G},su)$ be such that (see Definition \ref{def:mos})
\begin{displaymath}
\mathcal{O}^*_{s}(\tilde{\chi})=\mathcal{O}^*_{s}(\tilde{\chi}').\end{displaymath}
Then $\tc$ is replaceable by $\tc'$ (see Definition \ref{def:exchange}).
\end{prop}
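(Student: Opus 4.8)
The plan is to verify directly the two bullet points in Definition \ref{def:exchange} for the pair $(\tc,\tc')$, using the combinatorial description of $\mos_s$ from \S\ref{sec:computation} together with Theorem \ref{thm:wfs} applied inside the centraliser $\tg_s$. The key observation is that $\mos_s$ is essentially Lusztig's unipotent support (pulled through Jordan decomposition), and that the map $\tth = \cup_s \Theta_s$ from Theorem \ref{thmMeinolf} assigns to each $\te \in (\me^s)_\ell$ a character $\tth(\te) \in \me(\tg,s)$ whose position in the decomposition matrix is controlled by the order $\leq_s$, which by the discussion before Theorem \ref{thmMeinolf} is exactly the order $\leq_t$ transported through $\mos_s$.

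First I would locate $\tth^{-1}(\tc)$: since $\tc \in \me(\tg,s)$ and $\Theta_s$ has image $\me(\tg,s)$, there is a unique $\te \in (\me^s)_\ell$ with $\tth(\te) = \tc$, and by Equation \ref{eq:red} we have $d^1(\tc) = \te + \sum_{\te' \lneq \te} d_{\tc,\te'}\te'$ where the order is $\leq$ (equivalently $\leq_s$). Next I would analyse $d^1(\tc')$ for $\tc' \in \me(\tg,su)$: the characters in $\IBr(\tg)$ appearing in $d^1(\tc')$ all lie in some $(\me^s)_\ell$ (because $\tc'$ lies in the $\ell$-block determined by $s$, and $\me^s$ is the union of the relevant Lusztig series), and the crucial input is that among all $\tilde\eta$ with $d_{\tc',\tilde\eta} \neq 0$, the $\leq$-maximal ones are governed by the unipotent support: concretely, the combinatorial recipe $\mos_s$ should identify the $\leq$-maximal constituent of $d^1(\tc')$ as the Brauer character attached to the series $\me(\tg,s)$ with multipartition $\mos_s(\tc')$. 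Since $\mos_s(\tc) = \mos_s(\tc')$ by hypothesis and $\mos_s$ restricted to $\me(\tg,s)$ is a bijection onto $\mf_s$, this maximal constituent is precisely $\te = \tth^{-1}(\tc)$, and its multiplicity is $1$. This gives both bullet points: uniqueness of the $\leq$-maximal element and the value $d_{\tc',\tth^{-1}(\tc)} = 1$.

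The main obstacle is the middle step: showing that the $\leq$-maximal constituent of $d^1(\tc')$ is the one indexed by $\mos_s(\tc')$ and occurs with multiplicity exactly $1$. For $\tc' \in \me(\tg,s)$ itself this is Theorem \ref{thmMeinolf} together with the fact that $\mos_s$ on $\me(\tg,s)$ matches the indexing of $\Theta_s$; for general $\tc' \in \me(\tg,su)$ one needs to pass through the centraliser $\tg_s$, where $\tc'$ corresponds via Jordan decomposition to $\tc^{\tg_s}_{u,\delta} \in \me(\tg_s,u)$, and invoke Theorem \ref{thm:wfs}: the GGGR $\tilde\Gamma_{\mos(\tc^{\tg_s}_{u,\delta})}$ pairs with $\tc^{\tg_s}_{u,\delta}$ with multiplicity $1$ and $\mos(\tc^{\tg_s}_{u,\delta})$ dominates all other $\mu$ with nonzero pairing. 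One then transports this statement through Jordan decomposition and the compatibility of GGGRs with Deligne–Lusztig induction (the harmonic analysis underlying \cite{Geck}, \cite{Tay14}) to conclude that $d^1(\tc')$ has the asserted shape relative to $\leq_s$. I would structure this by first reducing to the unipotent-support statement in $\tg_s$, then citing the relevant compatibility from \cite{Geck} or \cite{Tay14} (the same machinery that proves Theorem \ref{thmMeinolf}), and finally translating back through $\mos_s$ using the explicit combinatorics of \S\ref{sec:computation} to pin down that the unique maximal constituent is $\tth^{-1}(\tc)$ with coefficient $1$.
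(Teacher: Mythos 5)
Your outline has the right skeleton --- reduce to a statement about the shape of $d^1(\tc')$ relative to $\leq_s$, and control that shape via Theorem \ref{thm:wfs} applied inside $\tg_s$ --- and this is indeed the paper's strategy. But the ``middle step'' you flag as the main obstacle is precisely where the content of the proof lies, and your proposed route for it (``transport through Jordan decomposition and the compatibility of GGGRs with Deligne--Lusztig induction'') does not identify the actual mechanism. Two concrete ingredients are missing. First, the bridge between $d^1(\tc')$ and the characters of $\me(\tg,s)$ is Bonnaf\'e's identity \cite[Proposition 4.11]{Br}:
\begin{displaymath}
  d^1(\tc')=\sum_{\mu\in\mf_s}\ll \tvf_{\mu},\tilde{\chi}^{\tg_s}_{\delta}\rr\, d^1(\tc^{\tg}_{s,\mu}),
  \qquad \tilde{\chi}^{\tg_s}_{\delta}:=\pm R_{\tbg_{su}}^{\tbg_s}(\tvf_{\delta}).
\end{displaymath}
Without this, your assertion that ``the $\leq$-maximal constituent of $d^1(\tc')$ is the Brauer character attached to $\me(\tg,s)$ with multipartition $\mos_s(\tc')$'' is essentially a restatement of what must be proved: the Brauer characters in $(\me^s)_{\ell}$ are only indexed through $\Theta_s^{-1}$ and Equation \ref{eq:red}, so one genuinely needs the displayed identity plus Theorem \ref{thmMeinolf} to reduce everything to showing $\ll\tvf_{\lambda},\tilde{\chi}^{\tg_s}_{\delta}\rr=1$ and $\ll\tvf_{\mu},\tilde{\chi}^{\tg_s}_{\delta}\rr=0$ for $\mu\nleq_s\lambda$.

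Second, the computation of these scalar products is not a ``compatibility of GGGRs with Deligne--Lusztig induction''; it is a combination of three facts: (i) the pairing matrix $D=(\ll\tvf_{\mu},\tilde{\Gamma}_{\mu'}\rr)$ is lower unitriangular (Theorem \ref{thm:wfs}), so the row of scalar products against unipotent characters is determined by the row of scalar products against GGGRs; (ii) GGGRs vanish off unipotent elements, so $\ll -,\tilde{\Gamma}_{\mu'}\rr$ factors through $d^1$; and (iii) $d^1$ commutes with $R_{\tbg_{su}}^{\tbg_s}$ and kills the $\ell$-element twist $\hat{u}$, whence $\ll\tilde{\chi}^{\tg_s}_{\delta},\tilde{\Gamma}_{\mu'}\rr=\ll\tc^{\tg_s}_{u,\delta},\tilde{\Gamma}_{\mu'}\rr$, to which Theorem \ref{thm:wfs} and the hypothesis $\mos_s(\tc)=\mos_s(\tc')$ apply directly. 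You correctly invoke Theorem \ref{thm:wfs} for $\tc^{\tg_s}_{u,\delta}$, but the chain (ii)--(iii) that transfers that information to the virtual character $\tilde{\chi}^{\tg_s}_{\delta}$, and hence via (i) to the coefficients in Bonnaf\'e's identity, is the step your sketch leaves as a black box. As written, the proposal is a correct research plan rather than a proof.
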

\begin{proof}
 Write $\tc=\tc^{\tg}_{s,   \lambda}$ and $\tc'=\tc^{\tg}_{su, \delta}$ for some
$\lambda\in\mf_s$ and $\delta\in\mf_{su}$. Let us denote by
$\tilde{\chi}^{\tg_s}_{\delta}$ the class function $\epsilon_{\tbg_{su}}\epsilon_{\tbg_s}R_{\tbg_{su}}^{\tbg_s}(\tvf_{\delta})\in\mathbb{Z}\Irr(\tg_s)$ (see Definition \ref{def:mos1}).

By \cite[Proposition 4.11]{Br}, we have:
\begin{displaymath}
  d^1(\tilde{\chi}')=\sum_{\mu\in \mf_s}\langle \tvf_{\mu},\tilde{\chi}^{\tg_s}_{\delta} \rangle d^1(\tilde{\chi}^{\tilde{G}}_{s, \mu}),
\end{displaymath}
where $d^1$ is the decomposition map (see \S\ref{sec:unitrigeneral}). So, using Theorem \hyperref[thmMeinolf]{\ref{thmMeinolf}} and Equation
\ref{eq:red}, we will be done once we show the following:
\begin{equation}\label{eq:suff}
\langle \tvf_{\lambda},\tilde{\chi}^{\tg_s}_{\delta} \rangle=1, \text{ and }
\mu \nleq_{s} \lambda \Rightarrow \langle \tvf_{\mu},\tilde{\chi}^{\tg_s}_{\delta} \rangle=0 \textrm{ (see Definition \ref{def:leqt})}.
\end{equation} 
We fix for the end of this proof a linear order refining $\leq_s$. Recall that for $\mu'\in\mf_s$ we denoted by $\tilde{\Gamma}_{\mu'}$ the corresponding GGGR of $\tg_s$ (see \S \ref{sec:GGGR}). By Theorem \ref{thm:wfs}, the square matrix $D=(\langle \tvf_{\mu} ,\tilde{\Gamma}_{\mu'}
\rangle)_{\mu, \mu' \in \mf_s}$ ordered according to the fixed linear order just set, is lower unitriangular. Consider now the row matrices
$L_1=(\langle \tilde{\chi}^{\tg_s}_{
      \delta}, \tvf_{\mu}\rangle)_{\mu\in\mf_s}$ and 
$L_2=(\langle \tilde{\chi}^{\tg_s}_{
      \delta},\tilde{\Gamma}_{\mu'}\rangle)_{\mu'\in\mf_s}$. 

By \cite[Proposition 8.25]{CabEn} the class function
$\tilde{\chi}^{\tg_s}_{\delta}$ decomposes in $\mathbb{Z}\me(\tg_s,1)$ so we have the matrix equality $L_1\times D=L_2$. Let $\mu'\in\mf_s$. As a GGGR is zero on non unipotent elements (recall that it is induced from a unipotent subgroup) we have that
$\langle \tilde{\chi}^{\tg_s}_{u, \delta},\tilde{\Gamma}_{\mu'}
\rangle=\langle d^1
\tilde{\chi}^{\tg_s}_{u, \delta},\tilde{\Gamma}_{\mu'}
\rangle$. Moreover as $u$ is an $\ell$-element, hence $\hat{u}$ is
also one, we have that
$d^1(\hat{u}\tvf_{\delta})=d^1(\tvf_{\delta})$. This
combined with the fact that the map $d^1$ commutes with the Deligne-Lusztig
induction map by \cite[Theorem 12.6(i)]{DM} yields 
\begin{equation}\label{eq:egal}
\forall \mu'\in\mf_s, \quad \langle \tilde{\chi}^{\tg_s}_{\delta},\tilde{\Gamma}_{\mu'}
\rangle=\langle \tilde{\chi}^{\tg_s}_{u, \delta},\tilde{\Gamma}_{\mu'} \rangle. 
\end{equation}

Now our hypothesis and Theorem \ref{thm:wfs} imply:
\begin{displaymath}
  \forall \mu'\in\mf_s, \mu' \nleq_s \lambda \Rightarrow \langle
 \tilde{\chi}^{\tg_s}_{u, \delta},\tilde{\Gamma}_{\mu'} \rangle=0
\end{displaymath}
and
\begin{displaymath}
\langle\tilde{\chi}^{\tg_s}_{u, \delta},\tilde{\Gamma}_{\lambda} \rangle=1.
\end{displaymath}
This, together with Equation \ref{eq:egal} and the matrix equality
$L_1 = D^{-1}L_2$ implies Equation \ref{eq:suff}. Hence the result.
 \end{proof}

\subsection{Finding suitable $\ell$-elements in centralisers of semisimple  $\ell'$-elements}
We will now prove that for all semisimple $\ell'$-element $s\in\tg$,
every character in $\me(\tg,s)$ is $O(\tg)$-replaceable (see Definition \ref{def:Oexchange}). We first deal
with the second condition about stabilisers in Propositions \ref{prop:lelement1} and \ref{lemineq}.

\begin{prop}\label{prop:lelement1}
  Let $s$ be a semisimple $\ell'$-element in $\tilde{G}$. Let $a$ be an integer such that for all $\xi\in \spec(s)$,
  $\ell^a$ divides $\gcd(m_{\xi}(s), |Z(\tilde{G})|)$ (see \S \ref{sec:JordanDec}). Let $z\in
  Z(\tg)$ be an element of order $\ell^a$ and let $\w$ be its eigenvalue. Then there exists an
  $\ell$-element $u\in \tg_s$ depending up to conjugacy only on
  the conjugacy class of $s$ and on the number $\ell^a$ such that:
  \begin{enumerate}
      \item   for all $\xi\in\spec(s)$, $\spec(u_{\xi})=\{\w^i, 0\leq i\leq \ell^a-1\}$ (see
  Definition \ref{def:uxi}) and is thus independent of $\xi$, and
      \item   $|(Z(\tg)_{su})_{\ell}|=\ell^a$ (see Definition \ref{def:Stabaction}).
\end{enumerate}
\end{prop}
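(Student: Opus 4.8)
The plan is to construct the $\ell$-element $u$ explicitly, eigenspace by eigenspace. Fix $\xi \in \spec(s)$. Since $\ell^a \mid \gcd(m_\xi(s), |Z(\tg)|)$, I can write $m_\xi(s) = \ell^a \cdot k_\xi$ for some positive integer $k_\xi$. The eigenvalue $\w$ of $z$ generates the group $\mu_{\ell^a}$ of $\ell^a$-th roots of unity in $\ofq^\times$ (because $z$ has order $\ell^a$, and $\gcd(\ell, q) = 1$ ensures $\ofq^\times$ contains a cyclic subgroup of that order). On the $\xi$-eigenspace $V_\xi$ of $s$ (of dimension $m_\xi(s)$) I let $u_\xi$ act with eigenvalues $\w^0, \w^1, \dots, \w^{\ell^a - 1}$, each with multiplicity $k_\xi$; that is, $u_\xi$ is a semisimple (diagonalisable) element of $\GL(V_\xi)$ of order dividing $\ell^a$, hence an $\ell$-element, with $\spec(u_\xi) = \{\w^i : 0 \le i \le \ell^a - 1\}$. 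This visibly gives property (1), and since the choice depends only on $m_\xi(s)$ and $\ell^a$, the element $u := \prod_\xi u_\xi \in \tbg_{\mathbf{t}}$ (under the identification \eqref{eq:centraliser}) depends up to conjugacy only on the $\tg$-conjugacy class of $s$ and on $\ell^a$ — provided I check that this $u$ is $wF$-stable, i.e.\ descends to an element of $\tg_s = \tbg_s^F$. This compatibility follows from \eqref{eq:actiononG}: $wF$ sends the $\xi$-block to the $F(\xi)$-block, $m_\xi(s) = m_{F(\xi)}(s)$ since multiplicities are constant on $F$-orbits, and the induced action on $\ofq^\times$ sends $\w$ to $\w^{\pm q}$, which permutes $\mu_{\ell^a}$; choosing the $u_\xi$ uniformly (the uniform multiset $\{\w^i\}$ with multiplicities $k_\xi$) makes the family $(u_\xi)_\xi$ invariant under this permutation, so $u$ is rational.

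For property (2), I need to compute $(Z(\tg)_{su})_\ell$, the stabiliser in $Z(\tg)_\ell$ of the $\tg$-conjugacy class of $su$. An element $z' \in Z(\tg)_\ell$ with eigenvalue $\w'$ stabilises this class iff $z' \cdot su$ is $\tg$-conjugate to $su$, which by the classification of semisimple classes recalled in \S\ref{sec:JordanDec} happens iff $\spec(z' su) = \spec(su)$ with matching multiplicities. Now $\spec(su) = \{\xi \w^i : \xi \in \spec(s),\ 0 \le i < \ell^a\}$ and $\spec(z' su) = \{\w' \xi \w^i : \dots\}$. Since $z'$ is an $\ell$-element, $\w'$ lies in $\mu_{\ell^b}$ for some $b$; and since $Z(\tg) \cong \mu_{\gcd(n, q \mp 1)}$ via eigenvalues, $\w' \in \mu_{|Z(\tg)|}$. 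The containment $\mu_{\ell^a} \cdot \spec(s) = \spec(su)|_{\text{on } \xi\text{-part}}$ shows: if $\w' \in \mu_{\ell^a}$ then $\w' \spec(su) = \spec(su)$ (multiplying each block $\xi\mu_{\ell^a}$ by $\w'$ permutes it), and the multiplicity of $\xi \w^i$ is $k_\xi$ independent of $i$, so multiplicities are preserved — hence $\mu_{\ell^a} \subseteq$ the relevant stabiliser, giving $\ell^a \mid |(Z(\tg)_{su})_\ell|$. Conversely, if $z'$ stabilises $su$, then in particular $\w' \spec(s) \cdot \mu_{\ell^a} = \spec(s) \cdot \mu_{\ell^a}$ as multisets; comparing with the already-known stabiliser $(Z(\tg)_\ell)_s$ and the fact that $\gcd(m_\xi(s), |Z(\tg)|_\ell) $ is exactly divisible by $\ell^a$ for at least one $\xi$ (taking $a$ to be the largest such integer — though the statement only asks for \emph{some} integer $a$ with the divisibility, so here I should argue the reverse inclusion more carefully, using that $su$ has $\ell^a$ distinct "$\w$-shifts" of $s$ and no more), one forces $\w' \in \mu_{\ell^a}$. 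This gives the reverse divisibility, hence equality in (2).

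The step I expect to be the main obstacle is the reverse inclusion in (2): showing that no $\ell$-element of $Z(\tg)$ outside $\mu_{\ell^a}$ can stabilise the class of $su$. The subtlety is that multiplying $su$ by such a $z'$ could, a priori, permute the eigenvalues $\xi \w^i$ among different $\xi$'s and different $i$'s simultaneously in a way that accidentally preserves the full multiset. The way to handle this is to observe that $\spec(su)$ has a product structure $\spec(s) \times \mu_{\ell^a}$ (as a set, with uniform multiplicities along the $\mu_{\ell^a}$-factor), so that the $\mu_{\ell^a}$-cosets are intrinsically recoverable from $\spec(su)$ (they are the fibres of a natural quotient), and any multiplicative stabiliser must respect this coset structure and hence act through the quotient on $\spec(s)$; but on $\spec(s)$ the $\ell$-part of the stabiliser is controlled, and combined with the requirement to fix each $\mu_{\ell^a}$-coset setwise this pins down $\w' \in \mu_{\ell^a}$. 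I would formalise this via the earlier machinery — in particular this is exactly the kind of statement Lemma \ref{lem:stabsulocal} and Corollary \ref{cor:stabtcl} are set up to exploit — so I would phrase the argument to feed cleanly into those results.
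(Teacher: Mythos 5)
Your construction of $u$ (eigenvalues $\w^0,\dots,\w^{\ell^a-1}$, each with multiplicity $m_{\xi}(s)/\ell^a$ on every eigenspace of $s$, chosen uniformly so as to be $F$-rational) and your eigenvalue argument for the stabiliser are essentially identical to the paper's proof, and your resolution of the reverse inclusion in (2) --- separating each eigenvalue $\xi\w^i$ of $su$ into its $\ell'$-part $\xi$ and its $\ell$-part $\w^i$, so that $\w'\spec(su)=\spec(su)$ forces $\w'\in\mu_{\ell^a}$ --- is exactly what the paper's terse ``arguing on eigenvalues'' amounts to. (One cosmetic slip: $Z(\tg)\cong\mu_{q\mp1}$ via eigenvalues, not $\mu_{\gcd(n,q\mp1)}$, which is the centre of $G$; this does not affect the argument.)
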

\begin{proof}
Let $\epsilon\in\{\pm 1\}$ be such that $\tg=\GL_n(\epsilon q)$. Recall that $\mathbf{T}$ denotes the maximal diagonal torus of $\tbg$,  let $\mathbf{T}'=\leftexp{g}{\mathbf{T}}$ for $g\in\tbg$ be an $F$-stable maximal torus containing $s$ and let $w\in\mathbf{W}$ be the canonical image of $g^{-1}F(g)$. Recall that (see Equation \ref{eq:actiononG})
\begin{displaymath}
\leftexp{g^{-1}}{\tg_s}=\tbg_{\mathbf{s}}^{wF}=\prod_{\fxi\in\spec(s)/F}{\tbg^{wF}_{\fxi}}
\end{displaymath}
where $\leftexp{g}{\mathbf{s}}=s$ and $\GL_{m_{\xi}(s)}((\epsilon q)^{|\fxi|})\cong\tbg_{\fxi}^{wF}$ through the map $$M\mapsto (M,F(M),\dots, F^{|\fxi|-1}(M))$$ for $M\in\GL_{m_{\xi}(s)}((\epsilon q)^{|\fxi|})$. For $\xi\in\spec(s)$ we set $u_{\xi}\in\tg_s$ such that its image in $\GL_{m_{\xi}(s)}((\epsilon q)^{|\fxi|})$ is a diagonal matrix with entries in the set $\{\w^i, 0\leq i\leq\ell^a-1\}$, each element in this set being repeated exactly $\frac{m_{\xi}(s)}{\ell^a}$ times.  Clearly this defines an $\ell$-element satisfying the first point. Let $x\in Z(\tg)_{\ell}$ and $\tau$ be its
eigenvalue. Suppose that $xsu$ is conjugate to $su$. Then arguing on eigenvalues we see that
there exists $i$ such that $\tau \w=\w^i$. Hence $x=z^{i-1}$. Moreover
$z\in (Z(\tg)_{su})_{\ell}$. Hence the result.
\end{proof}

\begin{rmk}
In \cite[Lemma 6.1 and Theorem 6.3]{KT} and thus in the case where $\tg=\GL_n(q)$, the conjugacy class of a given element $u$ is described by its elementary divisors as follows. For each $\fxi\in\spec(s)/F$, choose $P_{\fxi}$ the minimal polynomial over $\F_{q^{|\fxi|}}$ of a primitive $(q^{|\ll F\rr.\xi|}-1)_{\ell}\ell^a$-th root of unity so that for any $z\in (Z(\tg)_{\ell'})_s$, $P_{\ll F\rr.\xi}=P_{\ll F\rr. z\xi}$. Then the elementary divisors of $u$ are the polynomials $P_{\fxi}$, each repeated $\frac{m_{\xi}(s)}{\ell^a}$ times.
This has an analogue when
$\tg=\GU_n(q)$ by taking $P_{\fxi}$ to be:
\begin{itemize}
\item the minimal polynomial over $\F_{q^{|\fxi|}}$ of a primitive $(q^{|\ll F\rr.\xi|}-1)_{\ell}\ell^a$-th root of unity if $|\fxi|$ is even\item the minimal polynomial over $\F_{q^{2|\fxi|}}$  of a primitive $(q^{|\ll F\rr.\xi|}+1)_{\ell}\ell^a$-th root of unity if $|\fxi|$ and $\ell$ are odd,
\item the product $p_{\fxi}\tilde{p}_{\fxi}$ where $p_{\fxi}$ is the minimal polynomial over $\F_{q^{2|\fxi|}}$  of a primitive $(q^{|\ll F\rr.\xi|}+1)_{\ell}\ell^a$-th root of unity, and $\tilde{p}_{\fxi}$ is the monic polynomial whose roots are those of $p_{\fxi}$ raised to the
  $(-q)^{|\ll F \rr. \xi|}$-th power, if $|\fxi|$ is odd and $\ell=2$.
\end{itemize}

With such an element $u$ we would be able to $Z(\tg)_{\ell'}$-replace characters between $\me(\tg,s)$
and $\me(\tg, su)$ (see \S \ref{sec:conclusion}). Note that the $\tg_s$-conjugacy class of $u$ is not unique.
\end{rmk}

\begin{definition}
\label{def:gcdO}
For $s$ a semisimple $\ell'$-element in $\tg$ and $\lambda\in\mf_s$ (see
Definition \ref{def:mft}) we denote by $\gcd(|Z(\tg)|,\lambda)$
the greatest common divisor of $|Z(\tg)|$ and all the parts of all the
partitions in the image of the map $\lambda$.
\end{definition}
\begin{prop}\label{lemineq}
  Let $s$ be a semisimple $\ell'$-element of $\tilde{G}$. Let $\tilde{\chi}\in
  \mathcal{E}(\tilde{G},s)$ and let $a$ be such that $\ell^a=\gcd(|Z(\tilde{G})|, \mathcal{O}^*_{s}(\tilde{\chi}))_{\ell}$ (see Definition \ref{def:mos}). Let
  $\tilde{\eta}$ be such that
  $\Theta_s(\tilde{\eta})=\tilde{\chi}$ (see Theorem
  \hyperref[thmMeinolf]{\ref{thmMeinolf}}). Then there exists
  $u\in\tg_s$ an $\ell$-element and $\delta\in\mf_{su}$ such
  that the character $\tc$ is replaceable by $\tc'=\tc^{\tg}_{su, \delta}$ and
  \begin{displaymath}
      (\ZlpO)_{\tc}=(\ZlpO)_{\tc'}.
  \end{displaymath}
Moreover $\kappa_{G}(\tc')=\ell^a$. In particular $\kappa_G(\te)\geq \ell^a$.
\end{prop}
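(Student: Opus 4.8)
The plan is to build the desired $\ell$-element $u$ by combining the combinatorial description of $\mos_s$ with the explicit construction of Proposition \ref{prop:lelement1}. First I would observe that by the definition of $a$, for every $\xi\in\spec(s)$ the integer $\ell^a$ divides $\gcd(m_\xi(s),|Z(\tg)|)$: indeed $\mos_s(\tc)(\ll F\rr.\xi)$ is a partition of $m_\xi(s)$ (since $\mos_s(\tc)\in\mf_s$), so every part of it divides neither more nor less than being bounded by $m_\xi(s)$, and $\ell^a$ is a common divisor of $|Z(\tg)|$ and all parts, hence a divisor of $m_\xi(s)$ after summing the parts. So Proposition \ref{prop:lelement1} applies: it produces an $\ell$-element $u\in\tg_s$, depending only on the class of $s$ and on $\ell^a$, with $\spec(u_\xi)=\{\w^i:0\le i\le \ell^a-1\}$ for all $\xi$ (where $\w$ is the eigenvalue of a fixed order-$\ell^a$ element $z\in Z(\tg)$), and with $|(Z(\tg)_{su})_\ell|=\ell^a$.

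Next I would define $\delta\in\mf_{su}$. Using the identification $\spec(su)/F\leftrightarrow\{(\ll F\rr.\xi,\ll F_\xi\rr.\w)\}$ from Definition \ref{def:uxi}, I want $\delta$ to be chosen so that $\mos_s(\tc')=\mos_s(\tc)$. By the combinatorial formula \eqref{eq:wfs}, $\mos_s(\tc')(\ll F\rr.\xi)=\sum_{\ll F_\xi\rr.\w}|\ll F_\xi\rr.\w|\cdot\delta(\ll F\rr.\xi,\ll F_\xi\rr.\w)$. Since $\spec(u_\xi)$ has exactly $\ell^a$ elements all in a single... actually one must check the $F_\xi$-orbit structure on $\spec(u_\xi)=\{\w^i\}$; but since all these are roots of unity of $\ell$-power order and $F_\xi$ acts by a fixed power map, the orbits all have a common cardinality, say $c$, so there are $\ell^a/c$ orbits. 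I then set $\delta(\ll F\rr.\xi,\ll F_\xi\rr.\w):=\frac1{\ell^a}\cdot\mos_s(\tc)(\ll F\rr.\xi)$ for every orbit (this is a genuine partition since $\ell^a\mid$ all parts of $\mos_s(\tc)(\ll F\rr.\xi)$), making $\delta$ constant on $\spec(u_\xi)/F_\xi$. Then \eqref{eq:wfs} gives $\mos_s(\tc')(\ll F\rr.\xi)=\sum_{\text{orbits}}c\cdot\frac1{\ell^a}\mos_s(\tc)(\ll F\rr.\xi)=\frac{\ell^a}{c}\cdot c\cdot\frac1{\ell^a}\mos_s(\tc)(\ll F\rr.\xi)=\mos_s(\tc)(\ll F\rr.\xi)$, as wanted. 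With $\mos_s(\tc)=\mos_s(\tc')$ established, Proposition \ref{lembschange1} immediately gives that $\tc$ is replaceable by $\tc'$.

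For the stabiliser equality $(\ZlpO)_{\tc}=(\ZlpO)_{\tc'}$, I would invoke Proposition \ref{cor:stabtcglobal}: its hypotheses are exactly (1) $\mos_s(\tc)=\mos_s(\tc')$ (just shown), (2) $\delta(\ll F\rr.\xi,?)$ constant on $\spec(u_\xi)/F_\xi$ (by construction), and (3) all $F_\xi$-orbits on $\spec(u_\xi)$ have the same cardinality (noted above). I must also check the running hypothesis of Lemma \ref{lem:stabsulocal}, namely that $F_0(\spec(u_\xi))=\spec(u_{zF_0(\xi)})$ for all $zF_0\in(\ZlpO)_s$: since $\spec(u_\xi)=\{\w^i\}$ is independent of $\xi$ and is the full group of $\ell^a$-th roots of unity in $\ofq^\times$, and $F_0$ acts on $\ofq^\times$ by a power map which permutes this finite subgroup, both sides equal that same subgroup. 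Finally, for the cardinality: Corollary \ref{cor:stabtcl} gives $\kappa_G(\tc')_\ell=|(Z(\tg)_\ell)_{su}|=\ell^a$ by Proposition \ref{prop:lelement1}(2). For the $\ell'$-part, since $u=1$ forces nothing here, I note $\tc=\tc^{\tg}_{s,\lambda}$ with $\mos_s(\tc)=\lambda$ (the $u=1$ case of \eqref{eq:wfs}), and... the cleanest route is: $\kappa_G(\tc')_{\ell'}=\kappa_G(\tc)_{\ell'}$ by Corollary \ref{lprimepart} (whose hypothesis that $\tc,\tc'$ have the same $Z(\tg)_{\ell'}$-stabiliser follows from the already-proved $(\ZlpO)_{\tc}=(\ZlpO)_{\tc'}$ by intersecting with $Z(\tg)_{\ell'}$), while $\kappa_G(\tc)_{\ell'}=|(Z(\tg)_{\ell'})_{\tc}|$; and by Proposition \ref{prop:equivariancechar} together with the fact that $\ell^a=\gcd(|Z(\tg)|,\mos_s(\tc))_\ell=\gcd(|Z(\tg)|,\lambda)_\ell$, the $\ell'$-part of $\Stab_{Z(\tg)}(\tc)$ must be trivial — a $\hat z$ with $z$ of order a power coprime to $\ell$ fixes $\tc^{\tg}_{s,\lambda}$ only if $\lambda$ is constant on $z$-orbits of $\spec(s)/F$, which after a size count forces $z=1$. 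Hence $\kappa_G(\tc')=\kappa_G(\tc')_\ell\cdot\kappa_G(\tc')_{\ell'}=\ell^a$. The final inequality $\kappa_G(\te)\ge\ell^a$ then follows from Remark \ref{rmk:exchange}: replaceability of $\tc$ by $\tc'$ forces $\kappa_G(\tc')\mid\kappa_G(\tth^{-1}(\tc))=\kappa_G(\te)$, so $\ell^a\mid\kappa_G(\te)$, a fortiori $\kappa_G(\te)\ge\ell^a$.

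The main obstacle I anticipate is the bookkeeping around the $F_\xi$-orbit structure on $\spec(u_\xi)$ and verifying hypothesis (3) of Proposition \ref{cor:stabtcglobal} cleanly — one has to be sure that $F_\xi$ acts on the cyclic group $\{\w^i:0\le i\le\ell^a-1\}$ by a fixed automorphism (a power map) so that all orbits are cosets of a single subgroup and hence equicardinal; and one must track carefully that the $\ell'$-part of the various stabilisers is genuinely trivial, which is where the precise choice $\ell^a=\gcd(|Z(\tg)|,\mos_s^*(\tc))_\ell$ enters. Everything else is an assembly of the earlier propositions.
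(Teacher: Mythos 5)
Your overall route is the paper's: apply Proposition \ref{prop:lelement1} to get $u$ (after checking $\ell^a\mid m_\xi(s)$, which you do correctly by summing the parts of $\mos_s(\tc)(\fxi)$), define $\delta$ by dividing $\mos_s(\tc)$ by $\ell^a$, deduce $\mos_s(\tc')=\mos_s(\tc)$ from Equation \ref{eq:wfs}, invoke Proposition \ref{lembschange1} for replaceability and Propositions \ref{cor:stabsuglobal}--\ref{cor:stabtcglobal} plus Corollary \ref{cor:stabtcl} for the stabilisers and the $\ell$-part of $\kappa_G(\tc')$, and finish with Remark \ref{rmk:exchange}. Two steps, however, are genuinely wrong as you argue them.

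First, the $F_\xi$-orbit structure on $\spec(u_\xi)$. You justify equicardinality of the orbits by saying that a power map on a cyclic group has all orbits of the same size (cosets of one subgroup); that is false in general -- squaring on the group of $7$th roots of unity has orbits of sizes $1,3,3$. The correct reason, which the paper uses, is that $\w$ is the eigenvalue of an element of $Z(\tg)$, so $\w^{q-1}=1$ (untwisted) or $\w^{q+1}=1$ (twisted); in either case $F(\w)=\w^{\pm q}=\w$, so $F$, hence every $F_\xi$, fixes each $\w^i$ and \emph{all orbits are singletons} ($c=1$). This is what legitimises the identification $\spec(su)/F=\{(\fxi,\{\w^i\})\}$, the formula $\ell^a\cdot\delta=\mos_s(\tc)$, and hypothesis (3) of Proposition \ref{prop:stabtclocal}. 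Your formula happens to survive for general equicardinal $c$, but the equicardinality itself needs this argument.

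Second, the $\ell'$-part of $\kappa_G(\tc')$. Your claim that $(Z(\tg)_{\ell'})_{\tc}$ is trivial does not follow from $\ell^a=\gcd(|Z(\tg)|,\lambda)_\ell$: that equality constrains only $\ell$-parts, and $\lambda$ can perfectly well be constant on the orbits of an $\ell'$-order element $z\in Z(\tg)_s$ (e.g.\ $s$ with two eigenvalue orbits of equal multiplicity swapped by $z$ and $\lambda$ taking the same partition on both). So "a size count forces $z=1$" is not a valid step, and $\kappa_G(\tc')_{\ell'}=1$ is false in general. What the paper's proof actually establishes, and what is used later, is only $\kappa_G(\tc')_{\ell}=\ell^a$; this already gives $\ell^a\mid\kappa_G(\te)$ and hence the final inequality, and the $\ell'$-parts of $\kappa_G(\tc')$ and $\kappa_G(\te)$ are matched separately by Corollary \ref{lprimepart} when $\kappa_G$-replaceability is assembled in the proof of Theorem \ref{thm:A}. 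You should therefore drop the triviality claim and prove only the $\ell$-part statement.
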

\begin{proof}
Note that Proposition \ref{prop:lelement1} is applicable and let $u$ be
as in its conclusion. We have 
\begin{displaymath}
  \spec(su)/F=\{(\ll F \rr. \xi , \{\w^i\}) \mid
\ll F \rr. \xi\in\spec(s)/F,  0 \leq i \leq \ell^a-1\}
\end{displaymath}
where $\spec(u)=\{\w^i \mid 0\leq
i\leq \ell^a-1\}$. Then we define $\delta\in\mf_{su}$ such that for all $\ll F \rr. \xi\in \spec(s)/F$ and $\w^i\in\spec(u)$,  $\delta(\ll F \rr. \xi, \{\w^i\})$ is the partition of $\frac{m_{\xi}(s)}{\ell^a}$
such that 
\begin{displaymath}
  \ell^a. \delta(\ll F \rr. \xi,\{w^i\})=\mos_s(\tc)(\ll F \rr. \xi),
\end{displaymath}
and we set
\begin{displaymath} 
\tilde{\chi}':=\tilde{\chi}^{\tilde{G}}_{su,\delta}.
\end{displaymath}
Then by Equation \ref{eq:wfs} we have that
$\mos_s(\tc')=\mos_s(\tc)$. Hence, by Proposition \ref{lembschange1}, $\tc$ is replaceable by $\tc'$. We now want to apply
Proposition \ref{cor:stabtcglobal}. Then note that for all
$\xi\in\spec(s)$, $\spec(u_{\xi})$ is $O(\tg)$-stable as it consists
of a subgroup of $(\overline{\F}_q)_{\ell}^{\times}$ and the map
$\xi\mapsto\spec(u_{\xi})$ is constant on $\spec(s)$, so that
Proposition \ref{cor:stabsuglobal} is applicable. The first
hypothesis of Proposition \ref{prop:stabtclocal} is already checked while
the second follows from the fact that for all $\xi\in\spec(s)$, the
set $\spec(u_{\xi})/F_{\xi}$ is equal to $\spec(u_{\xi})=\spec(u)$ and the map
$\delta(\ll F \rr. \xi , ?)$ was defined as constant on this
set. The third hypothesis of Proposition \ref{cor:stabtcglobal} is
satisfied as for all $\xi\in\spec(s)$, the $F_{\xi}$-orbits on
$\spec(u_{\xi})$ all have cardinality $1$. Hence Proposition
\ref{cor:stabtcglobal} is applicable and yields the equality of
stabilisers. By Corollary \ref{cor:stabtcl} and Proposition
\ref{prop:lelement1} we also obtain that
$\kappa_G(\tc')_{\ell}=\ell^a$, and by Remark \ref{rmk:multiple} we have that $\kappa_G(\te)\geq\ell^a$.
\end{proof}

\begin{rmk}
Let us keep the hypothesis and notation of Proposition \ref{lemineq}
and of its proof, but replace
$u$ by an element as in the remark below Proposition
\ref{prop:lelement1}. Then the
character $\tc'=\tilde{\chi}^{\tg}_{su, \delta}$ with $\delta\in\mf_{su}$ as in the proof of Proposition \ref{lemineq} is
replaceable by $\tc$ and
the conclusion of Proposition \ref{lemineq} is true except that we
need to replace the group $\ZlpO$ by $Z(\tg)_{\ell'}$. This is what is
done in \cite[Theorem 6.3]{KT} in the case $\tg=\GL_n(q)$. Note that as the
$\tg_s$-conjugacy class of the element $u$ is not unique, there are
multiple choices of characters $\tc'\in \me^s$ with
which we can replace  $\tc$ with this method.
\end{rmk}

To apply Theorem \ref{thm:equiv} with the map $\tth$ defined in Theorem \ref{thmMeinolf}, we now need to prove that the characters $\tc$ and $\tc'$ as in Proposition \ref{lemineq} satisfy the first condition of $O(\tg)$-replaceability (see Definition \ref{def:Oexchange}). This condition involves computing $\kappa_G(\th_s^{-1}(\tc))_{\ell}$, which is the aim of this last proposition.

\begin{prop}\label{lemeq}
  Let $s$ be a semisimple $\ell'$-element of $\tilde{G}$. Then for all $\tilde{\chi}\in \mathcal{E}(\tilde{G},s)$,  $\kappa_G(\Theta^{-1}_s(\tilde{\chi}))_{\ell}= \gcd(|Z(\tg)|,
  \mathcal{O}^*_{s}(\tilde{\chi}))_{\ell}$ (see Theorem \ref{thmMeinolf} and Definition \ref{def:gcdO}) .
\end{prop}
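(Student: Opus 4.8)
The plan is to prove the two inequalities separately. Throughout write $\tc=\tc^{\tg}_{s,\delta}$ via the Jordan decomposition of characters, so that $\mos_s(\tc)=\delta$ — this is the case $u=1$ discussed in \S\ref{sec:GGGR} — and hence $\ell^a:=\gcd(|Z(\tg)|,\mos_s(\tc))_\ell=\gcd(|Z(\tg)|,\delta)_\ell$; set $\te:=\Theta_s^{-1}(\tc)\in\ILG$, so that $d^1(\tc)=\te+\sum_{\tm\lneq\te}d_{\tc,\tm}\tm$ by Theorem \ref{thmMeinolf} and Equation \ref{eq:red}. The inequality $\kappa_G(\te)_\ell\geq\ell^a$ is already at hand: $\ell^a$ divides every part of every $\delta(\fxi)$, hence each $m_\xi(s)$, so Proposition \ref{lemineq} applies and produces an $\ell$-element $u\in\tg_s$ together with $\tc'=\tc^{\tg}_{su,\delta'}$ for which $\tc$ is replaceable by $\tc'$ and $\kappa_G(\tc')_\ell=\ell^a$; after the row exchange of Remark \ref{rmk:exchange} one has a unitriangular basic set for $\tg$ with $\te=(\tth')^{-1}(\tc')$, whence $\kappa_G(\tc')\mid\kappa_G(\te)$ by Remark \ref{rmk:multiple}.

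The substance is the reverse inequality $\kappa_G(\te)_\ell\leq\ell^a$. Fix $\varphi\in\IBr(G\mid\te)$ and let $\tg_\varphi\supseteq G$ be its inertia group, so $\kappa_G(\te)=[\tg:\tg_\varphi]$ with $\tg_\varphi/G$ a subgroup of the cyclic group $\tg/G\cong\ofq^{\,\times}$; it thus suffices to show $[\tg:\tg_\varphi]_\ell\leq\ell^a$, i.e.\ to produce enough $\ell$-elements of $\tg/G$ fixing $\varphi$. The $\ell'$-part of $\kappa_G(\te)$ is not the difficulty — by Corollary \ref{lprimepart} it is just $|(Z(\tg)_{\ell'})_{\tc}|$ — but the $\ell$-part is invisible to the action of linear characters, since $Z(\tg)_\ell$ acts trivially on $\ILG$; and the obvious elements of $\tg_\varphi/G$, the images of $Z(\tg)$, yield only $\kappa_G(\te)\mid\gcd(n,|Z(\tg)|)$, in general coarser than $\ell^a$. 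I would obtain the sharp bound by bringing Kawanaka's theory to bear on $G$ itself, as done for $\SL_n(q)$ in \cite{KT}: the GGGRs of $G$ arise from the same unipotent subgroups as those of $\tg$, and the construction in the proof of Theorem \ref{thm:1} applied to the exchanged basic set gives a $\tg$-equivariant surjection $\IBr(G\mid\te)\twoheadrightarrow\Irr(G\mid\tc')$; one then shows it is a bijection — equivalently, that for $\chi'\in\Irr(G\mid\tc')$ the class function $d^1_G(\chi')$ has exactly one constituent in $\IBr(G\mid\te)$ — by analysing the wavefront set of $\chi'$ through the analogue for $G$ of Theorem \ref{thm:wfs}. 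Alternatively one may transport $\te$ through the Bonnafé--Rouquier Jordan decomposition of $\ell$-blocks (available here since $Z(\tbg)$ is connected) to a unipotent Brauer character of $\tg_s$ — a product of general linear groups over finite fields by Equation \ref{eq:centraliser} — and use the compatibility of that equivalence with the normal inclusion $G\cap\tg_s\lhd\tg_s$ and with the central torus of $\tbg_s$ to reduce, inducting on $n$, to counting the constituents of a unipotent Brauer character of a product of general linear groups restricted to its diagonal special linear subgroup.

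In either route the last step is to match the resulting cardinality with $\gcd(|Z(\tg)|,\mos_s(\tc))$ rather than with some coarser divisor of $|Z(\tg)|$, and this is where the explicit formula for $\mos_s$ in Equation \ref{eq:wfs} is used, together with the fact that the greatest common divisor of the parts of $\delta$ divides $m_\xi(s)$ for every $\xi$. I expect this upper bound to be the principal obstacle: the lower bound $\ell^a\mid\kappa_G(\te)$ is a formal consequence of the unitriangularity of the decomposition matrix of $\tg$ and of the combinatorial input already packaged in Proposition \ref{lemineq}, whereas the reverse bound requires genuinely $\ell$-local information about the decomposition matrix of $G$ (or about the block equivalence) — precisely the point at which Kawanaka's theory, rather than bare Clifford theory, is indispensable.
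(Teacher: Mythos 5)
Your lower bound is exactly the paper's: $\ell^a$ divides every part of every $\delta(\fxi)$, hence Proposition \ref{lemineq} applies and Remark \ref{rmk:multiple} gives $\ell^a\mid\kappa_G(\te)$. But the reverse inequality --- which you yourself identify as the substance --- is not proved in your proposal; you only sketch two speculative routes (wavefront sets for $G$ itself, or a Bonnaf\'e--Rouquier reduction with an induction on $n$), neither of which is carried out, and both of which would require substantial new input (an analogue of Theorem \ref{thm:wfs} for $G$, or a compatibility of the block equivalence with the inclusion $G\cap\tg_s\lhd\tg_s$) that the paper never establishes. So as it stands there is a genuine gap at the decisive step.

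Moreover, your diagnosis that the upper bound ``requires genuinely $\ell$-local information about the decomposition matrix of $G$'' is mistaken, and this is precisely the idea you are missing. The paper closes the gap by a global counting argument, following \cite[Theorem 4.7]{KT}. Let $R\supseteq G$ be such that $R/G$ is the largest $\ell'$-subgroup of $\tg/G$. By Clifford theory $\kappa_G(\te)_{\ell}=\kappa_R(\te)$, restriction of Brauer characters from $\tg$ to $R$ is injective, so $|\IBr(R)|=\sum_{\tc\in\tme}\kappa_G(\tth^{-1}(\tc))_{\ell}$. On the other side, $|\IBr(R)|$ equals the number of $\ell'$-conjugacy classes of $R$, and a determinant computation on centralisers of unipotent elements in the groups $\GL_{m_\xi(s)}((\pm q)^{|\fxi|})$ shows that each $\tg$-class of an $\ell'$-element $h=su$ splits in $R$ into exactly $\gcd((\tg:R),\mos_s(u))$ classes (Equation \ref{eq:lconj}); summing over the bijection between pairs (semisimple $\ell'$-class, unipotent class of the centraliser) and $\tme$ gives $|\IBr(R)|=\sum_{\tc\in\tme}\gcd((\tg:R),\mos_{s_{\tc}}(\tc))$. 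Since each summand on the left dominates the corresponding summand on the right by your lower bound and the totals agree, equality holds term by term. No analysis of individual Brauer characters of $G$, and no GGGR theory for $G$ itself, is needed; the only genuinely new computation is the unitary-case determinant image $\gcd(q+1,\mos_s(u))$, which your proposal does not address.
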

\begin{proof}
Let $R\subseteq\tg$ be the subgroup containing $G$ such that the factor group $R/G$ is the largest subgroup of order coprime to $\ell$ in the group $\tg/G$. Note that $\gcd(|Z(\tg)|,
  \mathcal{O}^*_{s}(\tilde{\chi}))_{\ell}=\gcd((\tg:R),
  \mathcal{O}^*_{s}(\tilde{\chi}))$. Note also that if $\tilde{G}=\GL_n(q)$ then our claim is \cite[Proposition
4.7]{KT}. So we can suppose that $\tilde{G}=\GU_n(q)$, though this is not necessary: just replace $q$ by $-q$ in what follows. 

By Proposition \hyperref[lemineq]{\ref{lemineq}} we have the
inequality
\begin{equation}
    \label{eq:1}
\kappa_G(\Theta^{-1}_s(\tilde{\chi}))_{\ell}\geq \gcd((\tg:R), \mos_s(\tc)).    
\end{equation}
To show that we have equality we will apply a counting argument as in \cite[Theorem 4.7]{KT}. It involves counting the number of conjugacy classes of $\ell'$-elements in $R$. To do this we show that if $h=su$ (Jordan decomposition) is an $\ell'$-element in
$\tg$, then $h\in R$ and 
\begin{equation}\label{eq:lconj}
  \frac{|h^{\tg}|}{|h^{R}|}=\gcd((\tg:R),\mos_s(u))
\end{equation}
where $\mos_s(u)$ denotes the element in $\mf_s$ (see Definition \ref{def:mft}) labelling the
unipotent conjugacy class of $u$ in $\tg_s$ (here the
element $u$ is a unipotent element) and $h^{\tg}$ (resp.\ $h^R$)
is the $\tg$-conjugacy class (resp.\ $R$-conjugacy class) of $h$. The fact that $h\in R$ is obvious. Then \cite[Lemma 2.2]{KT} which is a general group theoretic lemma gives that $\frac{|h^{\tg}|}{|h^{R}|}=\gcd((\tg:R), c)$ where $c$ is the index of $C_{\tg}(h)G$ in $\tg$, which is the same as the index of $\det(C_{\tg}(h))$ in the group of $(q+1)$-th roots of unity in $\overline{\F}^{\times}_q$. We now show that this index is $\gcd(q+1,\mos_s(u))$  (see Definition \ref{def:gcdO}), which will prove Equation \ref{eq:lconj}. 

Let $\epsilon\in\{\pm 1\}$. Observe that, using the same arguments as in \cite[Lemma 2.3]{KT}, if $u$ is a unipotent element in $\GL_m(\epsilon q^d)$ whose Jordan normal form corresponds to the partition $\lambda:=(\lambda_1,..., \lambda_r)$ of $m$ then $\det$ maps $C_{\GL_m(\epsilon q^d)}(u)$ onto the subgroup of index $\gcd(\epsilon q^d-1,\lambda_1,...,\lambda_r)$ of the group of $(\epsilon q^d-1)$-th roots of unity in $\overline{\F}_q^{\times}$. Now let $h=su$ be an $\ell'$-element in $\tg$. Let $\mathbf{T}'=\leftexp{g}{\mathbf{T}}$ be an $F$-stable maximal torus containing $s$ and let $w\in\mathbf{W}$ be the canonical image of $g^{-1}F(g)$. Then $\det(C_{\tg}(su))=\det(C_{\tg_s}(u))=\det(C_{\tg^{wF}_{\mathbf{s}}}(\mathbf{u}))$ where $\leftexp{g}{\mathbf{s}}=s$ and $\leftexp{g}{\mathbf{u}}=u$. Recall that (see Equation \ref{eq:actiononG})
\begin{displaymath}
\tbg_{\mathbf{s}}^{wF}=\prod_{\fxi\in\spec(s)/F}{\tbg^{wF}_{\fxi}}
\end{displaymath}
where $\GL_{m_{\xi}(s)}((-q)^{|\fxi|})\cong\tbg_{\fxi}^{wF}$ through the map $$M\mapsto (M,F(M),\dots, F^{|\fxi|-1}(M))$$ for $M\in\GL_{m_{\xi}(s)}((-q)^{|\fxi|})$, and let us write $\mathbf{u}=\prod_{\fxi\in \spec(s)/F}\mathbf{u}_{\fxi}$, so that 
\begin{displaymath}
  C_{\tg^{wF}_{\mathbf{s}}}(\mathbf{u})=\prod_{\ll F\rr.\xi\in\spec(s)/F}{C_{\tbg_{\fxi}^{wF}}(\mathbf{u}_{\fxi})}.
\end{displaymath}

Each unipotent element $\mathbf{u}_{\fxi}$ has its conjugacy class labelled by a partition of $m_{\xi}(s)$, which we denote by $\mos_s(\mathbf{u}_{\fxi})$. The map $\mos_s(u)\in\mf_s$ is the map $\fxi\mapsto \mos_s(\mathbf{u}_{\fxi})$. If $\fxi\in\spec(s)/F$ and $\delta$ is a primitive $((-q)^{|\ll F\rr.\xi|}-1)$-th root of $1$, then note that $\delta^{1-q+q^2-...+(-q)^{|\ll F\rr.\xi|-1}}$ is a primitive $(q+1)$-th root of unity. We deduce that $\det$ maps $C_{\tbg_{\fxi}^{wF}}(\mathbf{u}_{\fxi})$ onto the subgroup of index $\gcd(q+1,\mos_s(\mathbf{u}_{\fxi}))$ of the group of $(q+1)$-th roots of unity. This proves that $\det$ maps $C_{\tg}(su)=\leftexp{g}{C_{\tg^{wF}_{\mathbf{s}}}(\mathbf{u})}$ onto the subgroup of index $\gcd(q+1,\mos_s(u))$ of the group of $(q+1)$-th roots of unity.

We can now apply the same counting argument as in
\cite[Theorem 4.7]{KT}, which goes as
follows. For any $\tilde{\chi}\in
\tme$ (see the line following Theorem \ref{thmMeinolf}), we let $s_{\tc}$ be a semisimple
$\ell'$-element such that $\tc\in\me(\tg,s)$. Let $\tc\in\tme$, by Clifford theory we have
$\kappa_G(\Theta^{-1}_{s_{\tc}}(\tilde{\chi}))_{\ell}=\kappa_R(\Theta^{-1}_{s_{\tc}}(\tilde{\chi}))$ (see
\cite[Lemma 3.1]{KT}). Then we combine Equation
\ref{eq:lconj}, the fact that for all semisimple $\ell'$-elements $s$ both sets $\me(\tg,s)$ and $\Uni(\tg_s)$ (the set of unipotent conjugacy classes in $\tg_s$, see above Remark \ref{rmk:springercorr})
are in bijection with $\mf_s$, and finally the fact that two Brauer characters of $\tg$ have the same restriction to $R$ if and only if they are equal,
to obtain that the equality between the number of Brauer
characters of $R$ and the number of its $\ell'$-conjugacy classes
reads:
\begin{displaymath}
\sum_{\tilde{\chi} \in \tme}{\kappa_G(\tth^{-1}(\tilde{\chi}))_{\ell}}=\sum_{\tilde{\chi}\in
  \tme}{{\gcd((\tilde{G} : R),
    \mathcal{O}^*_{s_{\tc}}(\tilde{\chi}))}}. 
\end{displaymath}
This, together with Equation \ref{eq:1}, proves  that indeed $\kappa_G(\Theta_s^{-1}(\tilde{\chi}))_{\ell}=
\gcd((\tilde{G} : R), \mathcal{O}^*_{s}(\tilde{\chi}))$ for all $\tc\in \me(\tg,s)$ and all semisimple $\ell'$-elements $s$.
\end{proof}

We are now in a position to prove Theorem \ref{thm:A} in full.

\begin{proof}[Proof of Theorem \ref{thm:A}]
  By Proposition \ref{lemineq}, Corollary \ref{lprimepart} and Proposition \ref{lemeq}, any $\tc\in\tme$ is $O(\tg)$-replaceable (see Definition \ref{def:Oexchange}). We apply Theorem \ref{thm:equiv}, which together with the description of automorphisms of $G$ (see for instance \cite[Theorem 30 p. 158 and Theorem 36 p. 195]{St68}), proves our main result.
\end{proof}

\subsection{Concluding remarks}
\label{sec:conclusion}
The remark below Proposition \ref{lemineq} together with Proposition
\ref{lemeq} shows that for all $\tc\in\tme$ (see below Theorem \ref{thmMeinolf}), $\tc$ is
$Z(\tg)_{\ell'}$-replaceable (see Definition
\ref{def:Oexchange}). Theorem \ref{thm:equiv} yields an explicit unitriangular basic set for
$G$.  Such a basic set is one that can be built using the methods described in \cite{KT} for
$\SL_n(q)$. However in general such a basic set is not stable under the action of automorphisms. For example let $\tg=\GL_3(4), G=\SL_3(4)$ and let
$\w\in\F_4$ be a generator of $\F_4^{\times}$. Let $\ell=3$, $\tth$ be as in Theorem \ref{thmMeinolf}, $\tc$ be the Steinberg character and let $\te$ be such that $\tth(\te)=\tc$. Then we have $\kappa_G(\tc)=1$ and $\kappa_G(\te)=3$ (by Proposition \ref{lemeq}), so we need to replace the Steinberg character if we are to apply Theorem \ref{thm:1} (in fact this is the only character that we have to replace). The methods in \cite{KT} suggest to exchange the
Steinberg character with the only character in $\me(\tg, u)$ with
$u=\begin{pmatrix} 0 & 0 & \w \\ 1 & 0 & 0\\ 0 & 1 & 0\end{pmatrix}$
or $u=\begin{pmatrix} 0 & 0 & \w^{2} \\ 1 & 0 & 0\\ 0 & 1
    &0 \end{pmatrix}$. Both characters have $3$ irreducible constituents upon restriction to $G$ so by Clifford theory their restriction to $G$ is not the same. Moreover they are conjugate under $O(\tg)$. We obtain by restriction two $\Out(G)$-conjugated unitriangular basic sets for $G$. The methods developed in
this paper suggest to
replace the Steinberg character with
the only character in $\me(\tg, \diag(1,\w, \w^2))$, which is an
$O(\tg)$-stable replacement for the Steinberg character. The situation is similar with $\tg=\GU_3(2)$ and $G=\SU_3(2)$.

To finish, let us stress that the unitriangular basic set obtained for
$\SL_n(q)$ and $\SU_n(q)$ is explicit. Let $\tme=\cup_{s}\me(\tg,s)$ be
the usual basic set for $\tg$. For each semisimple
$\ell'$-element $s$ in $\tg$ and each element $\lambda\in\mf_s$ we
do the following:
\begin{itemize}
  \item if $\ell^a:=\gcd(|Z(\tg)|, \lambda)_{\ell}\neq (Z(\tg)_{\ell})_{\tc^{\tg}_{s,\lambda}}$ then
  take an $\ell$-element $u\in\tg_s$ as in Proposition
  \ref{prop:lelement1},
\item let $\delta\in \mf_{su}$ be such that $\delta(\ll F \rr. \xi, \{\w^i\})=\frac{\lambda(\ll F \rr. \xi)}{\ell^a}$ for all $\xi\in \spec(s)/F$ and all $i\in \{0,1,2, \dots, \ell^a-1\}$ where $\w$ is an element in $\overline{\F}^{\times}_q$ having order $\ell^a$, and
  \item replace $\tc^{\tg}_{s, \lambda}$ by $\tc^{\tg}_{su,\delta}$ in $\tme$ to obtain a new unitriangular basic set
  that we denote by $\tme'$.
\end{itemize}
Then the unitriangular basic set $\tme'$ obtained
for $\tg$ is such that its set of irreducible constituents upon restriction to $G$ is
a unitriangular basic set for $G$ that is stable under the action of $\Out(G)$.
%\section*{References}
\bibliography{biblio2}

\end{document}